\def\P{\mathbb{P}}
\def\E{\mathbb{E}}
\newtheorem{thm}{Theorem}[section]
\newtheorem{proposition}[thm]{Proposition}
\newtheorem{lemma}[thm]{Lemma}
\newtheorem{defn}[thm]{Definition}
\theoremstyle{definition}
\newtheorem{remark}[thm]{Remark}
\numberwithin{equation}{section}
\begin{document}
\title{Shotgun threshold for sparse Erd\H{o}s-R\'enyi graphs}

\author{Jian Ding\\Peking University \and Yiyang Jiang\\Peking University \and Heng Ma\\Peking University}

\maketitle

\begin{abstract}
In the shotgun assembly problem for a graph, we are given the empirical profile for rooted neighborhoods of depth $r$ (up to isomorphism) for some $r\geq 1$ and we wish to recover the underlying graph up to isomorphism. When the underlying graph is an Erd\H{o}s-R\'enyi $\mathcal G(n, \frac{\lambda}{n})$, we show that the shotgun assembly threshold $r_* \approx \frac{ \log n}{\log (\lambda^2 \gamma_\lambda)^{-1}}$ where $\gamma_\lambda$ is the probability for two independent Poisson-Galton-Watson trees with parameter $\lambda$ to be rooted isomorphic with each other. Our result sharpens a constant factor in a previous work by Mossel and Ross (2019) and thus solves a question therein.
\end{abstract}

\section{Introduction}

 Aiming for recovering a global structure from local observations, the shotgun assembly problems  have substantial interests in applications such as DNA sequencing \cite{AMRW96, DFS94, MBT13} and recovering neural networks \cite{KPPSP13}. In \cite{MR19}, the precise formulation and general mathematical framework was proposed together with numerous inspiring open questions on a number of concrete shotgun models. Since (the circulation of) \cite{MR19}, there has been extensive study on shotgun assembly questions including on random jigsaw problems \cite{BBN17, Martinsson19, BFM20,  HLW20}, on random coloring models \cite{PRS22}, on some extension of DNA sequencing models \cite{RBM21} and on lattice labeling models \cite{DL22}.

An example of shotgun assembling problems of particular interest is for random graph models \cite{MS15, GM22, HT21, AC22}. For random regular graphs with fixed degree, the asymptotic shotgun threshold was implied in \cite{Bollobas82} and was improved in \cite{MS15} to the precision of up to additive constant. For Erd\H{o}s-R\'enyi graphs with polynomially growing average degree, it was determined in \cite{GM22, HT21} whether recovery is possible from neighborhoods of depth $1$. 
For Erd\H{o}s-R\'enyi graphs with constant average degree, the shotgun threshold was known to have order $\log n$ from \cite{MR19}, and the main contribution of this paper is to determine its sharp asymptotics. (In fact, order $\log n$ was obtained except for the critical case, i.e, when the average degree is 1, where only a polynomial upper bound was obtained in \cite{MR19}.)

Before stating our result, we first define our model more formally. Fix $\lambda > 0$. Let $\mathcal G \sim \mathcal G(n, \frac{\lambda}{n})$, i.e., let $\mathcal G$ be an  Erd\H{o}s-R\'enyi graph on $n$ vertices where there is an edge between each unordered pair of vertices independently with  probability $\lambda/n$. For $v\in \mathcal G$ and $r\geq 1$, let $\mathsf{N}_{r}(v)$ be the depth $r$-neighborhood  rooted at $v$ viewed modulo isomorphism (equivalently, all other vertices in $\mathsf{N}_{r}(v)$ except $v$ are unlabeled). A couple of comments are in order: (1) here $\mathsf{N}_{r}(v)$ contains all vertices whose graph distances to $v$ are at most $r$ and all edges among these vertices; (2) here the notion of isomorphism for rooted graphs is as follows: a graph $G = (V, E)$ rooted at $o$ is isomorphic to a graph $G' = (V', E')$ rooted at $o'$ (denoted as $G\sim G'$) if there exists a bijection $\phi: V\mapsto V'$ with $\phi(o) = o'$ such that $(u, v)\in E$ if and only if $(\phi(u), \phi(v)) \in E'$. In the shotgun assembly problem, we are given the empirical profile for all rooted depth $r$-neighborhoods, i.e., we are given $\{\mathsf{N}_{r}(v): v\in \mathcal G\}$ and we wish to recover $\mathcal G$ up to isomorphism. We say the problem is \emph{non-identifiable} if there exist two graphs which are not isomorphic and both have empirical neighborhood profile $\{\mathsf{N}_{r}(v): v\in \mathcal G\}$; otherwise we say the problem is \emph{identifiable}. 

\begin{thm}\label{thm-main}
Fix $\lambda >0$. Define
\begin{equation}\label{eq-def-gamma}
\gamma_\lambda = \P( \mathbf{T} \sim\mathbf{T}')
\end{equation}
where $\mathbf{T}, \mathbf{T}'$ are two independent Poisson-Galton-Watson trees with parameter $\lambda$ (where the root of the tree is naturally the initial ancestor). Let $\mathcal G \sim \mathcal G(n, \frac{\lambda}{n})$. Then the following hold for any fixed $\epsilon_0>0$:
\begin{enumerate}[(i)]
   \item  for $r \leq \frac{(1-\epsilon_0) \log n}{ \log (\lambda^2 \gamma_\lambda)^{-1}}$, the shotgun problem is non-identifiable with probability tending to 1 as $n\to \infty$;
   \item  for $r \geq \frac{(1+\epsilon_0) \log n}{ \log (\lambda^2 \gamma_\lambda)^{-1}}$, the shotgun problem is identifiable with probability tending to 1 as $n\to \infty$ and in addition $\mathcal G$ can be recovered via a polynomial time algorithm. 
\end{enumerate}
\end{thm}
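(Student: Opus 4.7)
I would attack parts (i) and (ii) separately, but both reduce to the same underlying branching-process computation. The fundamental object is a joint matching process between pairs of rooted PGW($\lambda$) trees: at each depth a matched vertex pair contributes on average $\lambda^2\gamma_\lambda$ matched child pairs (the $\lambda^2$ from ordered pairs of children across the two trees, and $\gamma_\lambda$ the marginal iso probability of two PGW child subtrees), so $\lambda^2\gamma_\lambda$ is the natural per-level decay rate of the matching process. This is ultimately why the threshold exponent is $\log(\lambda^2\gamma_\lambda)^{-1}$. As a warm-up I would first prove the sharp estimate $\gamma_\lambda^{(r)} - \gamma_\lambda \asymp (\lambda^2\gamma_\lambda)^r$, where $\gamma_\lambda^{(r)}:=\P(\mathbf T^{(r)}\sim \mathbf T'^{(r)})$ and $\mathbf T^{(r)}$ is the depth-$r$ restriction of $\mathbf T$, via exactly this recursion.

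\textbf{Lower bound (non-identifiability).} For $r < (1-\epsilon_0) \log n / \log (\lambda^2\gamma_\lambda)^{-1}$, I would exhibit an edge-swap construction. Pick two edges $e_1=\{u_1,v_1\}$, $e_2=\{u_2,v_2\}$ of $\mathcal G$ with all four endpoints pairwise at graph distance $\geq 2r+C$, and replace them by $\{u_1,v_2\}$ and $\{u_2,v_1\}$. Let $A_i$ (resp.\ $B_i$) be the depth-$r$ rooted subtree at $u_i$ (resp.\ $v_i$) in $\mathcal G\setminus\{e_1,e_2\}$. The swap preserves the depth-$r$ shotgun profile as long as $A_1\sim A_2$ and $B_1\sim B_2$ as depth-$r$ rooted trees, while it actually changes the isomorphism type of $\mathcal G$ unless both of those iso's extend to global rooted iso's of the corresponding ambient subgraphs (otherwise the swap is realized by a hidden iso between the ambient pieces). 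The joint probability of the swap-good event is therefore $\asymp (\gamma_\lambda^{(r)}-\gamma_\lambda)^2 \asymp (\lambda^2\gamma_\lambda)^{2r}$ per ordered pair of well-separated edges; with $\asymp n^2$ such candidate pairs the first-moment count is of order $n^2(\lambda^2\gamma_\lambda)^{2r}$, which diverges precisely when $r<(1-\epsilon_0)r_*$. A second-moment calculation, using the near-independence of disjoint local structures in sparse ER, then upgrades this to existence with high probability.

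\textbf{Upper bound (identifiability and polynomial-time algorithm).} For $r>(1+\epsilon_0) \log n / \log(\lambda^2\gamma_\lambda)^{-1}$, I would give a greedy match-and-extend reconstruction: seed at an arbitrary vertex and iteratively glue new $\mathsf N_r$-neighborhoods from the empirical profile onto the already-identified portion via the compatibility rule that $\mathsf N_{r-1}(u)$ must appear at depth one inside $\mathsf N_r(v)$ whenever $u\sim v$. The algorithm errs only when at some step two distinct continuations are both compatible, and the existence of such a branch corresponds to the matching process from some anchor of $\mathcal G$ surviving $r$ levels. By the subcritical branching-process estimate (with mean $\lambda^2\gamma_\lambda<1$), the probability that the matching process from any given vertex survives to depth $r$ is $O((\lambda^2\gamma_\lambda)^r)$; a union bound over the $n$ vertices gives total failure probability $\lesssim n(\lambda^2\gamma_\lambda)^r = o(1)$ above threshold. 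The running time is polynomial because each $\mathsf N_r$-neighborhood has size $n^{O(1)}$, so each matching step is polynomial.

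\textbf{Main obstacle.} The hardest step I anticipate is pinning down the sharp estimate $\gamma_\lambda^{(r)}-\gamma_\lambda\asymp(\lambda^2\gamma_\lambda)^r$ together with the corresponding survival estimate for the joint tree-matching process, and then lifting these cleanly from the PGW model to the actual sparse ER graph. The principal difficulty is that successive depths of the tree-iso recursion are strongly coupled, so the analysis must work with paired PGW trees jointly rather than decomposing across levels. A secondary subtlety in the lower bound is the need to require both $A_1\not\sim A_2$ and $B_1\not\sim B_2$ \emph{globally}: this is what produces the square $(\gamma_\lambda^{(r)}-\gamma_\lambda)^2$ and in turn the sharp constant $1$ rather than a weaker constant like $2$ in the numerator of $r_*$.
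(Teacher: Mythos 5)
Your identification of $\alpha_\lambda=\lambda^2\gamma_\lambda$ as the per-level decay rate of the joint matching process, and the first-moment count $n^2\alpha_\lambda^{2r}$ for the lower bound, do match the paper's mechanism (its Lemma on $\mathfrak p_r\asymp\alpha_\lambda^r$ and the $n^2\mathfrak p_{2r}$ count for blocking pairs). But both halves of your plan have genuine gaps. For the lower bound, the missing piece is a valid certificate that the edge swap changes the isomorphism class of $\mathcal G$. Your criterion ``the swap changes the isomorphism type unless the two depth-$r$ isomorphisms extend to global isomorphisms of the ambient pieces'' is not justified and is not sufficient as stated: non-extendability of the particular local isomorphisms does not preclude the swapped graph being isomorphic to the original via a different map (e.g.\ swapped pieces that happen to be pairwise isomorphic in a crossed way), and when $\lambda>1$ a typical edge lies in the giant component, where one cannot even reduce global isomorphism to a comparison of the two affected pieces. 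This is exactly why the paper restricts to pairs of whole tree components and builds the blocking configuration with extra structure (the length-$L$ line decoration, differing heights, binary branching above level $r$, the spine with $O(L)$ bushes): these force the empirical profile of $(2(r{+}L))$-neighborhoods (component diameters) to change, giving a robust non-isomorphism certificate, and the uniqueness of the decoration is also what makes the second-moment cross terms controllable. In addition, your asserted estimate $\gamma_\lambda^{(r)}-\gamma_\lambda\asymp\alpha_\lambda^r$ needs an argument beyond the recursion: the difference equals $\mathfrak p_r-\P(\mathbf T\sim\mathbf T',\,H(\mathbf T)>r)$, so a matching lower bound requires showing the second term is a strictly smaller fraction of $\mathfrak p_r$, which you do not address (the paper sidesteps this by working with $\mathfrak p_r$ and $\mathfrak p_{r,L}$ directly).

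The upper bound has a more basic quantitative flaw. The ambiguity events for your greedy gluing are indexed by \emph{pairs} of vertices whose surviving depth-$r$ neighborhoods are isomorphic; the per-pair probability is $\asymp\alpha_\lambda^r=n^{-(1+\epsilon_0)}$, so the correct union bound is $n^2\alpha_\lambda^r=n^{1-\epsilon_0}\to\infty$, not $n\alpha_\lambda^r$. As written, your argument would only give identifiability for $r\gtrsim 2\log n/\log\alpha_\lambda^{-1}$, i.e.\ it loses exactly the factor of $2$ that the theorem is about. Indeed, at the true threshold surviving isomorphic pairs \emph{do} exist; the paper's key observation is that only neighborhoods with \emph{two} $r$-arms need to be unique, and for those the pairwise isomorphism probability is $\asymp\alpha_\lambda^{2r}=o(n^{-2})$, which survives the union over pairs. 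Turning that into reconstruction then requires the good/bad vertex decomposition, the admissibility criterion and its verification (Sections 3--4), including the treatment of short cycles (isomorphic two-armed neighborhoods centered on a common cycle), the reduced BFS coupling to pairs of PGW trees, and conditional moment/isomorphism estimates; none of this is replaced by your compatibility rule, whose correctness (overlap consistency, small components, cycles) is itself unargued. So the proposal captures the right exponent heuristic but does not yet contain the two ideas that make the sharp constant attainable: the diameter-changing blocking configuration for non-identifiability, and the two-arm uniqueness plus good/bad reconstruction scheme for identifiability.
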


\begin{remark}\label{rem-alpha-lambda-monotonicity}
The function  $\lambda \mapsto (\lambda^2 \gamma_{\lambda}) $ is continuous on $(0,\infty)$, and is  increasing on $(0,1]$ as well as decreasing on $[1,\infty)$. See Lemma \ref{lem-alpha-lambda}.
\end{remark}

From the statement of Theorem~\ref{thm-main}, we see that a key novelty in this work is a connection to the isomorphic probability for Poisson-Galton-Watson (PGW) trees (isomorphism between random trees has been recently studied in \cite{Olsson22} although the isomorphic probabilities considered in \cite{Olsson22} are different from what we need here).
Therefore, the driving mechanism for the shotgun threshold of random regular graphs and Erd\H{o}s-R\'enyi graphs is completely different: as argued in \cite{MS15}, for random regular graphs ``tree neighborhoods are all alike; but every non-tree neighborhood is filled with cycles in its
own way'' and as a result cycle structures are essential in distinguishing neighborhoods in regular graphs; in the contrast, for Erd\H{o}s-R\'enyi graphs, local neighborhoods behave like PGW trees and loosely speaking these trees will all look different from each other with a suitably chosen depth.

That being said, cycles do appear in some of the neighborhoods of Erd\H{o}s-R\'enyi graphs and potentially this may incur an issue for our scheme of approximation by trees. The good news is that, by \eqref{eq-alpha-lambda-bound} a typical neighborhood with depth near the threshold is a tree so that conceptually our reduction to isomorphism between PGW trees is valid. However, on the technical side, the occurrence of cycles forms a substantial obstacle for the analysis which is why our proof for identifiability is fairly involved. For non-identifiability, while we do investigate neighborhoods with depth twice the critical threshold, we focus on some special type of structures (see, e.g., \eqref{eq-def-mathfrak-p-r-L}) which are trees.

Finally, it is non-trivial how exactly the isomorphic probability is related to the shotgun threshold.  As we will see in Section~\ref{sec:tree-isomorphic}, what is of fundamental importance to us is the probability that two independent PGW trees survive for at least $r$ levels and at the same time their first $r$ levels are isomorphic with each other (see \eqref{eq-def-frak-p-r}): this probability decays exponentially in $r$ where the exponential rate is governed by $\gamma_\lambda$ (see Lemma~\ref{Decay pr}). In order to further elaborate this connection, we feel it would be more useful to simply present the proof for non-identifiability (as incorporated in Section~\ref{sec:non-identifiability}), where tree isomorphism plays a role in constructing blocking configurations (which is inspired by and also an improvement upon what was considered in \cite{MR19}). The proof for identifiability, as mentioned above, is substantially more challenging and will be carried out in Sections~\ref{sec:identifiability} and \ref{sec:ER-property}. 

\smallskip

\noindent{\bf Notation convention.}  We denote by $\mathbb N$ the collection of all natural numbers. 
We use \emph{with high probability} for with probability tending to 1 as $n\to \infty$. For non-negative sequences $f_n$ and $g_n$, we write $f_n \lesssim g_n$ if there exists a constant $C>0$ such that $f_n \leq C g_n$ for all $n\geq 1$. We write $f_n \lesssim_\lambda g_n$ in order to stress that the constant $C$ depends on $\lambda$. We use $\mathrm{Bin}(n, p)$ to denote a binomial distribution/variable with parameter $(n, p)$ and we write $X\sim \mathrm{Bin}(n, p)$ if $X$ is a binomial variable with parameter $(n, p)$. As we will reiterate in the main text, we assume that there is a pre-fixed (but arbitrarily chosen) ordering on $V=V(\mathcal{G})$. We write $\mathcal G_{uv} = 1$ if and only if there is an edge in $\mathcal G$ between $u$ and $v$. For a graph $G$, we let $\mathrm{Comp}(G)$ be the \emph{complexity}, that is, $\mathrm{Comp}(G)$ is the minimal number of edges that one has to remove from $G$ so that no cycle remains. For a rooted tree $T$, we say it survives $\ell$ levels if its $\ell$-th level contains at least one vertex. For a rooted depth $r$-neighborhood $\mathsf N_r(v)$, we say $\mathsf N_r(v)$ survives if $\mathsf N_{r-1}(v) \neq \mathsf N_r(v)$.
 Also, we write depth $r$-neighborhood as $r$-neighborhood for short.

\smallskip

\noindent {\bf Acknowledgement.} We warmly thank Hang Du, Haojie Hou, Haoyu Liu, Yanxia Ren and Fan Yang for helpful discussions.

\section{Proof of non-identifiability}\label{sec:non-identifiability}

As mentioned earlier, we need a particular version of isomorphic probability which we now introduce. Along the way, we also introduce some useful notations.
 \begin{itemize}
 \item  For a rooted tree $T$, we denote by $|T|$ the size of $T$, and by $H(T)$ the height  of  tree $T$. For each positive integer $r$, let $T|_{r}$ be the restriction of the tree $T$ to its first $r$ levels. For an individual  $u \in T$, we denote by $|u|$ the level of $u$, i.e., the distance from $u$ to the root of $T$.

 \item  Let $T, T^{\prime}$ be  two rooted trees. Recall that we have defined isomorphism between $T$ and $T'$. In addition, for each $r \in \mathbb{N} \cup \{ \infty \}$, we write $T \sim_{r} T'$ if $\mathrm{min}\{ H(T), H(T')\} \geq r$ and  $T|_{r} \sim T'|_{r}$; this is the key notion of isomorphism for our analysis later.

 \item  Let $\mathbf{T}$ and $\mathbf{T}'$ be two independent Poisson-Galton-Watson trees with parameter $\lambda$ (PGW$(\lambda)$-trees).  Define
 \begin{equation}\label{eq-def-frak-p-r}
   \mathfrak{p}_{r}=\mathfrak{p}_{r}(\lambda)=\mathbb{P}\left(\mathbf{T} \sim_{r} \mathbf{T}'\right) \,, \mbox{ for } r\in \mathbb{N}\cup\{\infty\}\,.
 \end{equation}
\end{itemize}
The importance of $\mathfrak p_r$ lies in the following fact: if there exist two isomorphic $2r$-neighborhoods which are disjoint trees, then there should also exist $v, u$ such that their $2r$-neighborhoods are two disjoint isomorphic trees with some decoration (see Figure~\ref{p1} where the line segment between $w$ and $w'$ in the figure is the decoration) and also with some additional structural properties (in fact, we also will need to adjust the value of $r$ slightly in order to pose the additional structural properties). In this case, we can then construct two non-isomorphic graphs which have the same empirical profile for $r$-neighborhoods.

\begin{figure}[htbp]
\centering
\includegraphics[scale=0.6]{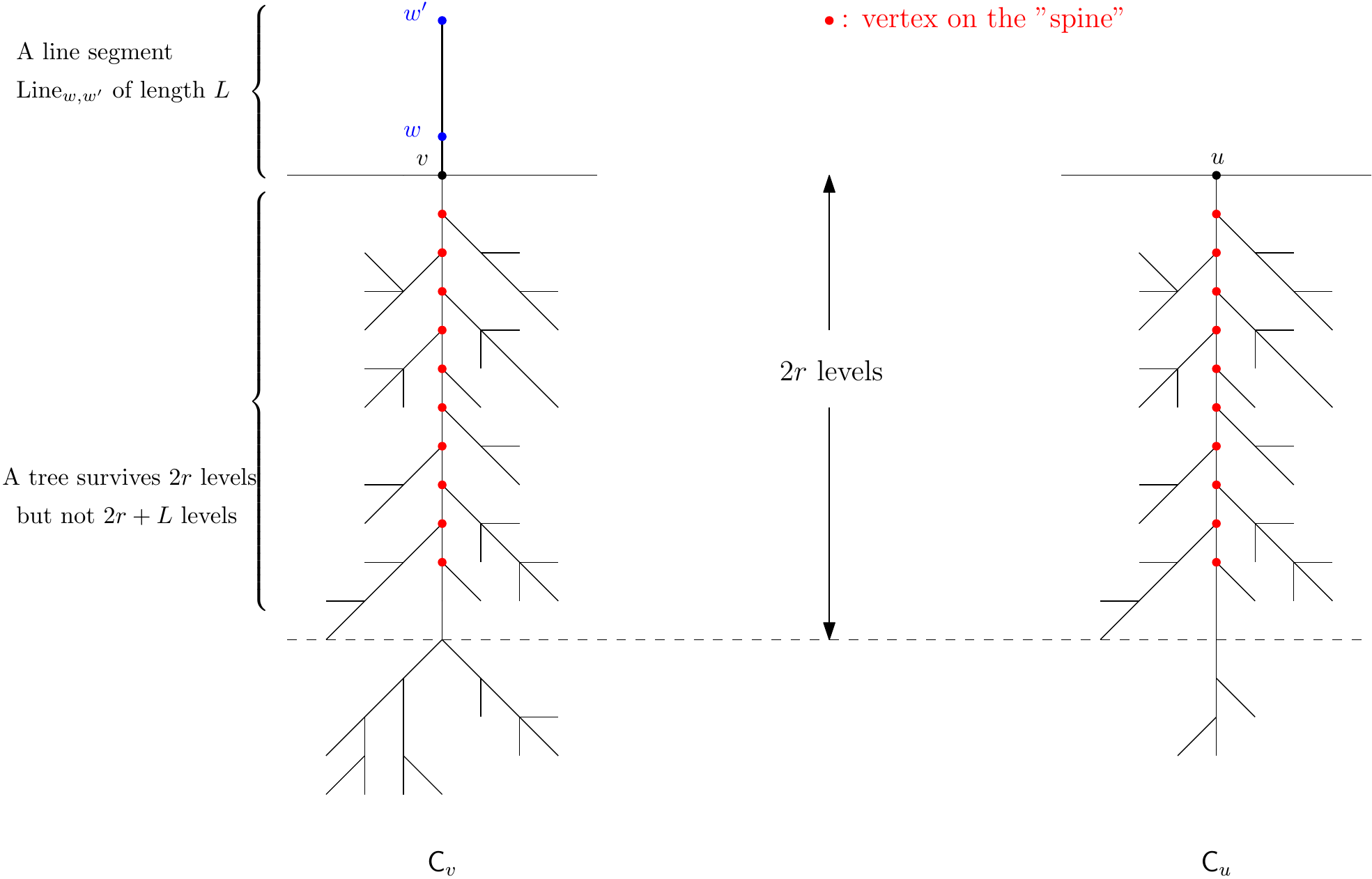}
\caption{Example of blocking subgraph} \label{p1}
\end{figure}

\subsection{Isomorphism for Galton-Watson trees}\label{sec:tree-isomorphic}

In this subsection 
we prove a number of lemmas on isomorphism of PGW-trees. Unless specified otherwise, we will denote by $\mathbf T, \mathbf T'$ two independent PGW($\lambda$) trees.

   \begin{lemma}\label{lem-monotone}
  The sequence $(\mathfrak{p}_{r})_{r \geq 1}$ is non-increasing in $r$, and  $ \mathfrak{p}_{r} \to \mathfrak{p}_{\infty}=0$ as $r\to \infty$.
   \end{lemma}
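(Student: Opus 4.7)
The plan is to handle monotonicity and the limit separately. For monotonicity, I would simply observe that if $\mathbf{T}|_{r+1} \sim \mathbf{T}'|_{r+1}$ via some bijection $\phi$, then restricting $\phi$ to the first $r$ levels witnesses $\mathbf{T}|_{r} \sim \mathbf{T}'|_{r}$, and the height conditions trivially survive the restriction; so $\{\mathbf{T}\sim_{r+1}\mathbf{T}'\}\subseteq\{\mathbf{T}\sim_{r}\mathbf{T}'\}$ and therefore $\mathfrak{p}_{r+1}\leq \mathfrak{p}_{r}$.

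Since $(\mathfrak{p}_r)$ is non-increasing and bounded below by $0$, it has a limit, and by continuity of probability from above, this limit equals $\mathbb{P}(\bigcap_{r\geq 1}\{\mathbf{T}\sim_r\mathbf{T}'\})=\mathbb{P}(\mathbf{T}\sim_{\infty}\mathbf{T}')$, which is precisely $\mathfrak{p}_{\infty}$. It remains to show $\mathfrak{p}_\infty=0$, and I would split into two regimes according to $\lambda$.

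In the subcritical/critical case $\lambda\leq 1$, a PGW$(\lambda)$-tree is almost surely finite, so $\mathbb{P}(H(\mathbf{T})\geq r)\to 0$; since $\mathbf{T}\sim_r\mathbf{T}'$ requires $H(\mathbf{T})\geq r$, this already forces $\mathfrak{p}_r\to 0$ (in fact exponentially fast). In the supercritical case $\lambda>1$, the argument is the interesting one: let $Z_r$ (resp.\ $Z_r'$) denote the size of level $r$ of $\mathbf{T}$ (resp.\ $\mathbf{T}'$), and recall the classical Kesten-Stigum theorem, by which the martingales $W_r:=Z_r/\lambda^r$ and $W_r':=Z_r'/\lambda^r$ converge almost surely to independent limits $W, W'$ whose distributions are continuous (have a density) on $(0,\infty)$ conditional on survival. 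Any isomorphism $\mathbf{T}|_r\sim\mathbf{T}'|_r$ in particular matches level-$r$ vertices, so $\mathbf{T}\sim_\infty\mathbf{T}'$ forces $Z_r=Z_r'$ for every $r$, hence $W=W'$. Conditional on both trees surviving, independence together with the continuous density give $\mathbb{P}(W=W'\mid\text{both survive})=0$; conditional on at least one tree going extinct, $\mathbf{T}\sim_\infty\mathbf{T}'$ is impossible. Combining the two cases yields $\mathfrak{p}_\infty=0$.

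The main obstacle is really only the supercritical regime, since once one invokes Kesten-Stigum and the absolute continuity of the martingale limit on the survival event (both standard), the argument collapses to the trivial ``two independent continuous random variables coincide with probability zero''. Everything else (monotonicity, the subcritical case, interchanging the intersection with the limit) is routine.
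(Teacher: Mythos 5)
Your proposal is correct and follows essentially the same route as the paper: monotonicity by restricting the isomorphism, identifying the limit with $\mathbb{P}\bigl(\bigcap_{r}\{\mathbf{T}\sim_r\mathbf{T}'\}\bigr)$, and killing the supercritical case via the Kesten--Stigum martingale limits $W,W'$, whose independence and absolute continuity on the survival event (a fact due to Harris rather than to Kesten--Stigum itself) give $\mathbb{P}(W=W'>0)=0$. Your only inaccuracy is the parenthetical claim that $\mathbb{P}(H(\mathbf{T})\geq r)\to 0$ exponentially fast for $\lambda\leq 1$, which fails at the critical value $\lambda=1$ (the decay is only polynomial there), but this remark is immaterial to the argument.
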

   \begin{proof}
 The monotonicity and convergence are obvious since $\{ \mathbf{T} \sim_{r+1} \mathbf{T}' \} \subset  \{ \mathbf{T} \sim_{r} \mathbf{T}' \}$ and $\cap_{r=1}^{\infty} \{ \mathbf{T} \sim_{r} \mathbf{T}' \}=\{ \mathbf{T} \sim_{\infty} \mathbf{T}' \}$. It remains to show that $\mathfrak{p}_{\infty}=0$.

 Let $Z_{n}$ and $Z_{n}'$ be the numbers of the vertices in the level $n$ of the tree $\mathbf{T}$, $\mathbf{T}'$ respectively. Then applying the Kesten-Stigum theorem  \cite{KS66} (see also \cite{LPP95})  we have that
    \begin{equation*}
 \P(\mathbf{T} \sim \mathbf{T}' \mbox{ and }  |H(\mathbf{T})| = |H(\mathbf{T}')| = \infty) \leq \P(Z_{n}= Z_{n}' \geq 1  ,  \forall \, n\geq 1 )  \leq \P(W= W'  > 0 )\,, 
    \end{equation*}
where $W$, $W'$ are the limits of $L^2$-bounded martingales $Z_{n}/ \lambda^n$ and $Z_{n}'/ \lambda^n$.
By  \cite{Harris48} (see also \cite[Theorem 8.3]{Harris63} and \cite{Stigum66}) $W$ has a probability density on the set $\{W>0\}$. Since $W$ and $W'$ are independent, we get that $\P(W= W'  > 0 ) = 0$.
\end{proof}

We next estimate the decay rate for $\mathfrak p_r$. To this end, define
\begin{equation}\label{eq-alpha-q}
\alpha_{\lambda}  = \lambda^2 \gamma_{\lambda} \mbox{ and } q_{\lambda}=\P(|\mathbf T| < \infty)\,.
\end{equation}
We claim that
\begin{equation}\label{eq-q-bound}
q_\lambda < \lambda^{-2} \mbox{ for } \lambda>1\,.
\end{equation}
There is nothing original in \eqref{eq-q-bound} and we supply a proof merely for completeness. It is well-known (by the method of conditioning on the number of children for the root) that $q_\lambda$ is the minimal zero for the equation $x = e^{-\lambda + \lambda x}$. Therefore, \eqref{eq-q-bound} can be reduced to $\exp(-\lambda + \lambda^{-1}) < \lambda ^{-2}$. Let $f(\lambda)= \lambda^{2}\exp(-\lambda + \lambda^{-1})$, then 
\begin{equation*}
f'(\lambda)= - (\lambda-1)^2 \exp(-\lambda + \lambda^{-1}) <0 \,, \text{ for all }\lambda > 1\,.
\end{equation*}
Thus $f(\lambda) < f(1)=1$ for all $\lambda > 1$, completing the verification.
The following lemma will be useful in controlling $\mathfrak p_r$ in Lemma~\ref{Decay pr}.

\begin{lemma}\label{lem-alpha-lambda}
For any $\lambda > 0$, we have $\alpha_{\lambda}= \alpha_{ \lambda q_{\lambda}} < 1$. Furthermore, there is a power series $A$ with non-negative coefficients such  that 
 $\alpha_{\lambda} = A( \lambda e^{-\lambda})$.
\end{lemma}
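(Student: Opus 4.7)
The cornerstone of the plan is the explicit formula
\[
\P\bigl(\mathbf{T}\sim\tau\bigr)\;=\;\frac{\lambda^{|\tau|-1}e^{-\lambda|\tau|}}{|\mathrm{aut}(\tau)|}
\]
valid for every finite rooted tree $\tau$, where $\mathrm{aut}(\tau)$ denotes the root-fixing automorphism group. I would prove it by induction on $|\tau|$: conditional on the Poisson$(\lambda)$-many children at the root, sort them into subtree-isomorphism types with multiplicities $m_1,\ldots,m_j$, and observe that the multinomial factor $k!/(m_1!\cdots m_j!)$ arising from the number of ways to assign types to children exactly cancels the $k!$ in the Poisson weight of having $k=\sum_i m_i$ children. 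Together with the inductive hypothesis applied to each subtree type and the standard decomposition $|\mathrm{aut}(\tau)|=\prod_i m_i!\cdot\prod_i|\mathrm{aut}(\sigma_i)|^{m_i}$, the expression collapses to the claimed closed form.

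Squaring and summing over rooted isomorphism classes gives $\gamma_\lambda=\sum_\tau\P(\mathbf{T}\sim\tau)^{2}$, and multiplying by $\lambda^{2}$ yields
\[
\alpha_\lambda \;=\; \sum_\tau \frac{(\lambda e^{-\lambda})^{2|\tau|}}{|\mathrm{aut}(\tau)|^{2}} \;=\; A\bigl(\lambda e^{-\lambda}\bigr), \qquad A(x) \;:=\; \sum_\tau \frac{x^{2|\tau|}}{|\mathrm{aut}(\tau)|^{2}},
\]
which is manifestly a power series in $x$ with non-negative coefficients, settling the second assertion. Convergence on $[0,1/e]$ is automatic since $A(1/e)=\alpha_{1}=\gamma_{1}\leq 1$, and $\lambda e^{-\lambda}\in(0,1/e]$ for every $\lambda>0$, so the representation is valid on the entire range of $\lambda$.

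For the identity $\alpha_\lambda=\alpha_{\lambda q_\lambda}$ I would exploit the fixed-point equation $q_\lambda=e^{-\lambda(1-q_\lambda)}$: taking logarithms and rearranging yields $\log(\lambda q_\lambda)-\lambda q_\lambda=\log\lambda-\lambda$, i.e., $(\lambda q_\lambda)e^{-\lambda q_\lambda}=\lambda e^{-\lambda}$, so substituting into $A(\cdot)$ gives the equality immediately. (The same identity admits a probabilistic derivation: conditional on extinction, $\mathbf{T}$ is distributed as PGW$(\lambda q_\lambda)$, which combined with the Kesten--Stigum argument from Lemma \ref{lem-monotone} yields $\gamma_\lambda=q_\lambda^{2}\gamma_{\lambda q_\lambda}$.) Finally, the strict bound $\alpha_\lambda<1$ drops out of the power series: since $A$ has non-negative coefficients it is non-decreasing on $[0,\infty)$, hence $\alpha_\lambda\leq A(1/e)=\gamma_{1}$; and $\gamma_{1}<1$ follows from the elementary observation that, say, $\P(|\mathbf{T}|=1,\,|\mathbf{T}'|\geq 2)=e^{-1}(1-e^{-1})>0$, which on its own precludes isomorphism on a set of positive probability. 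The only mildly fiddly step in the whole argument is the automorphism bookkeeping in the inductive proof of the closed-form formula; everything else is straightforward manipulation.
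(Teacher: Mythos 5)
Your proposal is correct, and its core coincides with the paper's: both rest on an explicit product formula for $\P(\mathbf T\sim\tau)$ for finite $\tau$ and then sum the squares over isomorphism classes; your $1/|\mathrm{aut}(\tau)|$ bookkeeping is equivalent to the paper's ratio $\#\tau/b(\tau)$, since the number of plane (ordered) representations of a rooted tree $\tau$ is $b(\tau)/|\mathrm{aut}(\tau)|$. Where you genuinely diverge is in how the other two assertions are extracted. The paper proves $\alpha_\lambda=\alpha_{\lambda q_\lambda}$ probabilistically (conditioning PGW$(\lambda)$ on extinction yields PGW$(\lambda q_\lambda)$, so $\gamma_\lambda=q_\lambda^2\,\gamma_{\lambda q_\lambda}$), and it proves $\alpha_\lambda<1$ for $\lambda>1$ via the separate estimate \eqref{eq-q-bound}, i.e.\ $q_\lambda<\lambda^{-2}$. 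You instead deduce both claims from the series itself: the conjugation identity $(\lambda q_\lambda)e^{-\lambda q_\lambda}=\lambda e^{-\lambda}$ (a correct rearrangement of the fixed-point equation $q_\lambda=e^{-\lambda(1-q_\lambda)}$) gives $\alpha_\lambda=A(\lambda e^{-\lambda})=A(\lambda q_\lambda e^{-\lambda q_\lambda})=\alpha_{\lambda q_\lambda}$, and monotonicity of $A$ together with $\lambda e^{-\lambda}\le e^{-1}$ gives $\alpha_\lambda\le A(e^{-1})=\gamma_1<1$ — essentially the observation the paper only exploits afterwards to obtain Remark~\ref{rem-alpha-lambda-monotonicity}. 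Within this lemma your route avoids \eqref{eq-q-bound} entirely (the paper still needs that bound later for \eqref{eq-alpha-lambda-bound}). One step you should make explicit: for $\lambda>1$ the identity $\gamma_\lambda=\sum_\tau\P(\mathbf T\sim\tau)^2$, with $\tau$ ranging over \emph{finite} isomorphism classes, is not automatic — it requires $\P(\mathbf T\sim_\infty\mathbf T')=0$, i.e.\ Lemma~\ref{lem-monotone}, to kill the contribution of two infinite isomorphic trees. You cite that lemma only parenthetically for the alternative probabilistic derivation, but your main argument (and hence both the representation $\alpha_\lambda=A(\lambda e^{-\lambda})$ for supercritical $\lambda$ and everything you derive from it) needs it at exactly this point, just as the paper's proof does.
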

   \begin{proof}
  Since $ \P( \mathbf{T} \sim_{\infty} \mathbf{T}' )=0$ (by Lemma~\ref{lem-monotone}), we have
   \begin{equation*}
     \gamma_{\lambda}= \P(\mathbf{T} \sim \mathbf{T}'  )
      = \P(\mathbf{T} \sim \mathbf{T}'  \,\big|\,  |\mathbf{T}|, |\mathbf{T}'| < \infty  ) q_{\lambda}^2\,.
   \end{equation*}
It is well-known that the law of $\mathbf{T}$ under $\P( \cdot  \,\big|\,  |\mathbf{T}| < \infty)$ is the same as the law for PGW$(\lambda q_{\lambda})$.  
 Since $\mathbf{T}, \mathbf{T}'$ are independent, we get  $\P(\mathbf{T} \sim \mathbf{T}'  \,\big|\,  |\mathbf{T}|, |\mathbf{T}'| < \infty )=  \gamma(\lambda q_{\lambda} )$. This implies that  $\gamma (\lambda ) = \gamma(\lambda q_{\lambda} )  q_{\lambda}^2 $ and $ \alpha_{\lambda}= \alpha_{ \lambda q_{\lambda}}$.
In addition,  note $\gamma_{\lambda} \in (0,1)$   for all $\lambda >0$. Thus, $\alpha_{\lambda}<1$ for all $\lambda \leq 1$. For $\lambda>1$, thanks to \eqref{eq-q-bound} we have 
   $ \gamma_{\lambda} < q_{\lambda}^2  \leq \frac{1}{\lambda^4} $, implying $\alpha_\lambda<1$.

We now prove the second assertion. By Lemma~\ref{lem-monotone}, we have $\gamma_{\lambda}= \sum_{\tau} \P(\mathbf{T}  \sim \tau ) ^2$, 
where $\tau$ is summed over all the  equivalent classes for finite rooted ordered trees and the equivalence is given by rooted isomorphism. We set  $b(\tau)= \prod_{j=1}^{|\tau|}  b_{j}(\tau)!$, where    $\{ b_{j}(\tau):j=1, \ldots, |\tau| \}$ is the collection of the numbers of children for vertices in the tree $\tau$. 
 Then we have
\begin{equation*}
  \P(\mathbf{T}  \sim \tau)=  \frac{ \# \tau  }{b(\tau) }  e^{-\lambda |\tau|} \lambda^{|\tau|-1} \,,
\end{equation*}
where $\# \tau = \# \{ T \text{ is a rooted ordered tree}: T \sim \tau \}$. Let
\begin{equation*}
  A(s) =  \sum_{\tau}  \left[   \frac{ \# \tau  }{b(\tau) }  s ^{ |\tau|} \right]^2  \text{ for all } s \in [0, e^{-1}]\,.
\end{equation*}
Since $A(e^{-1}) = \gamma_1 < 1$, we see that $A$ is well-defined on $[0,e^{-1}]$. Then a simple computation yields that $\alpha_{\lambda}= A(\lambda e^{-\lambda})$.
\end{proof}
By Lemma~\ref{lem-alpha-lambda} and the fact that $\lambda e^{-\lambda}$ is increasing in $\lambda$ on $[0, 1]$ and decreasing in $\lambda$ on $[1, \infty)$, we obtain the desired monotonicity of $\alpha_{\lambda}$ as in  Remark~\ref{rem-alpha-lambda-monotonicity}. In addition, for $\lambda > 1$, we have from Lemma~\ref{lem-alpha-lambda} and \eqref{eq-q-bound} that
 \begin{equation}\label{eq-alpha-lambda-bound}
 \frac{1}{\log (1/ \alpha_\lambda)}   < \frac{1}{2\log (\lambda)} \text{ for } \lambda > 1.
 \end{equation}
 This implies that when the depth $r$ is near the shotgun threshold, a typical $r$-neighborhood has at most $O(n^{1/2 - \delta})$ vertices for some $\delta>0$ and as a result is a tree (see Lemma \ref{lemma-complexity-of-Nr}).

 For convenience, we let $\mu_k$'s be Poisson probabilities given by
 \begin{equation}\label{eq-def-mu-k}
 \mu_k = \mu_k(\lambda) =  \frac{\lambda^k}{k!} e^{-\lambda}\text{ for } k\geq 0\,.
 \end{equation}
 The following simple identity will be used repeatedly in our proof:
 \begin{equation}\label{eq-relation-mu-k}
 k\mu_{k}=\lambda \mu_{k-1} \text{ for all } k\geq 1\,.
 \end{equation}
\begin{lemma}\label{Decay pr}
     We have  $\mathfrak{p}_{r} \asymp \alpha_{\lambda}^{ r}$, i.e., there exist two constants $c, C>0$ (possibly depending on $\lambda$) such that  $c \alpha_{\lambda}^{r} \leq \mathfrak{p}_{r} \leq C \alpha_{\lambda}^{r}   $.
   \end{lemma}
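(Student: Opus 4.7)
The plan is to sandwich $\mathfrak{p}_r$ between two linear recursions whose multiplicative factor is $\alpha_\lambda=\lambda^2\gamma_\lambda$. Introduce the auxiliary quantities
\[ F_r := \mathbb{P}(\mathbf{T}|_r \sim \mathbf{T}'|_r) \quad\text{and}\quad G_r := \mathbb{P}(\mathbf{T} \sim \mathbf{T}',\, H(\mathbf{T})<r,\, H(\mathbf{T}')<r). \]
Splitting $\{\mathbf{T}|_r \sim \mathbf{T}'|_r\}$ according to whether $H(\mathbf{T})\ge r$ gives the identity $F_r = G_r + \mathfrak{p}_r$, while monotone convergence combined with Lemma~\ref{lem-monotone} yields $F_r \searrow \gamma_\lambda$ and $G_r \nearrow \gamma_\lambda$.

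For the upper bound I condition on the Poisson($\lambda$) many root-children $(T_i),(T'_j)$ of the two trees. If $\mathbf{T}\sim_r\mathbf{T}'$ then the isomorphism $\mathbf{T}|_r\sim\mathbf{T}'|_r$ induces a matching of root-children whose $(r-1)$-truncations coincide, and at least one matched pair must be \emph{tall} (height $\ge r-1$), forcing $\sim_{r-1}$ on the full pair. Union-bounding over the choice of this distinguished tall pair and invoking the Mecke/Palm formula for the two independent Poisson processes of children yields
\[ \mathfrak{p}_r \;\le\; \lambda^{2}\,\mathfrak{p}_{r-1}\,F_r. \]
For the lower bound I restrict to the explicit sub-event that each root has \emph{exactly} one tall child, the two tall subtrees are $\sim_{r-1}$-isomorphic, and the remaining short root-children (height $<r-1$) form matching multisets of finite iso-classes. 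Decomposing the root-children of each tree by Poisson colouring (tall vs.\ each finite iso-class), the tall factor evaluates to $\lambda^{2}e^{-2\lambda\rho}\mathfrak{p}_{r-1}$ with $\rho:=\mathbb{P}(H(\mathbf{T})\ge r-1)$, while the short-matching factor equals the analogous coloured expansion of $G_r$ divided by the `no tall child' weight $e^{-2\lambda\rho}$; after the two exponentials cancel we obtain
\[ \mathfrak{p}_r \;\ge\; \lambda^{2}\,\mathfrak{p}_{r-1}\,G_r. \]

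Iterating these two recursions from $r=1$ yields $\lambda^{2r}\prod_{i\le r} G_i \le \mathfrak{p}_r \le \lambda^{2r}\prod_{i\le r} F_i$, so the lemma reduces to proving $\prod_i F_i \asymp \gamma_\lambda^r \asymp \prod_i G_i$, i.e.\ that $F_i-\gamma_\lambda$ and $\gamma_\lambda-G_i$ are summable in $i$. For the upper tail, bootstrap: from $F_r\to\gamma_\lambda$ and $\alpha_\lambda<1$ (Lemma~\ref{lem-alpha-lambda}) the recursion already forces $\mathfrak{p}_r \le (\alpha_\lambda+\varepsilon)^r$ eventually for any small $\varepsilon>0$, hence $\sum_r \mathfrak{p}_r<\infty$; combined with the bound $F_r-\gamma_\lambda\le\mathfrak{p}_r$ (from $G_r\le\gamma_\lambda$) this delivers $\sum_r (F_r-\gamma_\lambda)<\infty$. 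For the lower tail, observe $\gamma_\lambda - G_r = \mathbb{P}(\mathbf{T}\sim\mathbf{T}', H(\mathbf{T})\ge r) \le \mathbb{P}(H(\mathbf{T})\ge r,\,\mathbf{T}\text{ finite})^2$, and via Lemma~\ref{lem-alpha-lambda} the effective parameter $\lambda q_\lambda\le 1$ renders this either exponentially small ($\lambda q_\lambda<1$) or of order $1/r^2$ ($\lambda=1$), both summable. The main obstacle is the Poisson-colouring bookkeeping that makes the short-matching factor collapse cleanly to $G_r$ in the lower bound, together with the summability check in the critical regime $\lambda=1$ where the survival probability decays only polynomially.
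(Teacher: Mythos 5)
Your proposal is correct, and the upper bound is essentially the paper's argument: the recursive decomposition over matched root-children plus Poisson size-biasing ($k\mu_k=\lambda\mu_{k-1}$, your Mecke/Palm step) is exactly \eqref{eq-isomorphism-induction}--\eqref{eq-mathfrak-p-r-upper-bound}, with your $F_r$ equal to the paper's $g_r$. The lower bound, however, is genuinely different. The paper applies a Bonferroni bound to the union in \eqref{eq-isomorphism-induction}, obtaining $\mathfrak p_r\ge \lambda^2 g_r\mathfrak p_{r-1}-\lambda^4 g_r\mathfrak p_{r-1}^2$, and then concludes by showing $\sum_r\bigl|\tfrac{\mathfrak p_{r+1}}{\mathfrak p_r}-\alpha_\lambda\bigr|<\infty$; the quadratic error term is controlled by the already-geometric $\mathfrak p_{r-1}$, so no further input on the trees is needed. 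You instead restrict to the clean sub-event ``exactly one tall child on each side, the tall pair is $\sim_{r-1}$, the short children match as multisets'' and use Poisson thinning to factor this exactly into $\lambda^2 e^{-2\lambda\rho}\mathfrak p_{r-1}\cdot G_r e^{2\lambda\rho}=\lambda^2 G_r\mathfrak p_{r-1}$; this is valid (matching multisets of root-child iso-classes is equivalent to rooted isomorphism, and the tall/short thinned processes are independent), and it buys a tidy two-sided linear recursion $\lambda^2 G_r\mathfrak p_{r-1}\le\mathfrak p_r\le\lambda^2 F_r\mathfrak p_{r-1}$ with no error term. The price is that you must prove summability of $\gamma_\lambda-G_r=\P(\mathbf T\sim\mathbf T',H(\mathbf T)\ge r)\le\P(H(\mathbf T)\ge r,|\mathbf T|<\infty)^2$, which requires the height-tail of the tree conditioned on extinction: exponential when $\lambda q_\lambda<1$, and Kolmogorov's $O(1/r)$ bound for the critical case $\lambda=1$. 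These are classical facts, but they are an external input the paper's route avoids entirely; if you want the argument self-contained at $\lambda=1$ you should cite or sketch the critical survival estimate. Your summability of $F_r-\gamma_\lambda\le\mathfrak p_r$ via the bootstrap $\lambda^2F_r\to\alpha_\lambda<1$ (Lemma \ref{lem-alpha-lambda}) mirrors the paper's final step and is fine.
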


 \begin{proof}
 For two independent PGW($\lambda$) trees $\mathbf T$ and $\mathbf T'$, define
\begin{equation}\label{eq-def-g-r}
 g_{r} = \P( \mathbf{T}|_{r} \sim \mathbf{T}'|_{r})\,.
\end{equation}
Note that
     \begin{equation*}
      \{ \mathbf{T}  \sim \mathbf{T}'  \} \subset     \{ \mathbf{T}|_{r} \sim \mathbf{T}'|_{r} \}     = \{  \mathbf{T}  \sim \mathbf{T}' , H(\mathbf{T}) \leq r \} \cup   \{  \mathbf{T}  \sim_{r} \mathbf{T}' , H(\mathbf{T}) > r \}\,.
     \end{equation*}
    Thus,  by Lemma~\ref{lem-monotone} we have
    \begin{equation}\label{eq-g-r-bound}
    \gamma_\lambda \leq  g_{r} \leq \gamma_\lambda + \mathfrak{p}_{r} \mbox{ and } g_{r} \downarrow \gamma_\lambda \mbox{ as } r \to \infty\,.
   \end{equation}

  Let $D, D'$ be the numbers of children for the roots of $\mathbf{T},\mathbf{T}'$ respectively. Let $\mathbf T_i$ be the subtree rooted at the $i$-th child for the root of $\mathbf T$ (similar notation applies for $\mathbf T'_i$), that is, the tree that consists of the $i$-th child as well as all of its descendants. Then, on the event $ \{D = D' = k\} $ we have
     \begin{equation}\label{eq-isomorphism-induction}
     \{ \mathbf{T} \sim_{r} \mathbf{T}^{\prime} \} =   \bigcup_{ 1 \leq i, j \leq k} \{  \mathbf{T}_{i} \sim_{r-1} \mathbf{T}_{j}^{\prime} \} \cap \{ (\mathbf{T} \backslash \mathbf{T}_{i})|_{r}  {\sim} (\mathbf{T}' \backslash \mathbf{T}'_{j} )|_{r}  \} \,.
   \end{equation}
On the one hand,  we get from \eqref{eq-isomorphism-induction} and \eqref{eq-relation-mu-k} that
  \begin{align}\label{eq-mathfrak-p-r-upper-bound}
      \mathfrak{p}_{r} & \leq \sum_{k=1}^{\infty} \mu_{k}^{2} k^{2} \mathfrak{p}_{r-1} \mathbb{P}\left(\mathbf{T}|_{r} \sim \mathbf{T}^{\prime}|_{r} \mid D=D^{\prime}=k-1\right) \nonumber \\
      &=\lambda^{2} \mathfrak{p}_{r-1} \sum_{k=1}^{\infty} \mu_{k-1}^{2} \mathbb{P}\left(\mathbf{T}|_{r} \sim \mathbf{T}^{\prime}|_{r} \mid D=D^{\prime}=k-1\right) =\lambda^{2} g_{r} \mathfrak{p}_{r-1}   \,.
 \end{align}
   On the other hand,  using the inequality  $\mathbb{P} ( \cup_{i} A_{i} ) \geq  \sum_{i} \mathbb{P}\left(A_{i}\right)-\frac{1}{2} \sum_{i \neq j} \mathbb{P}\left(A_{i} \cap A_{j}\right)  $ we get from  \eqref{eq-isomorphism-induction} that
      \begin{equation}\label{eq-mathfrak-p-r-lower-bound}
        \mathfrak{p}_{r}  \geq  \sum_{k =  1}^{\infty} k^{2} \mathfrak{p}_{r-1}
         \mathbb{P}\left( \mathbf{T}|_{r} \sim \mathbf{T}^{\prime}|_{r} \mid D=D^{\prime} =k-1 \right)   \mu_{k}^{2} - \frac{1}{2}\mathrm{err} = \lambda^2 g_{r} \mathfrak{p}_{r-1} - \frac{1}{2}\mathrm{err}
         \end{equation}
         where
         \begin{equation*}
       \mathrm{err} =
        \sum_{k = 2}^{\infty}
        \sum_{(i_{1},j_{1}) \neq (i_{2},j_{2}) }
        \P( \mathbf{T}_{i_{l}} \sim_{r-1} \mathbf{T}'_{j_{l}},  (\mathbf{T} \backslash \mathbf{T}_{i_{l}})|_{r} \sim (\mathbf{T}' \backslash  \mathbf{T}'_{j_{l} })|_{r} \mbox{ for } l \in \{1,2\} \mid D=D'=k  )  \mu_{k}^{2}\,.
      \end{equation*}
      Note that for $(i_{1},j_{1}) \neq (i_{2},j_{2})$, the event $\mathbf{T}_{i_{l}} \sim_{r-1} \mathbf{T}'_{j_{l}}$ and  $(\mathbf{T} \backslash \mathbf{T}_{i_{l}})|_{r} \sim (\mathbf{T}' \backslash  \mathbf{T}'_{j_{l} })|_{r} \mbox{ for } l \in \{1,2\} $ is contained in 
      $\cup_{s_1 \neq s_2, t_1 \neq t_2}\left\{ \mathbf{T}_{s_{1}} \sim_{r-1} \mathbf{T}'_{t_{1}}, \mathbf{T}_{s_{2}} \sim_{r-1} \mathbf{T}'_{t_{2}}  \right\}$.
Thus,  \begin{align*}
    \mathrm{err} & \leq  \sum_{k \geqslant 2} (k(k-1))^2 \mathfrak{p}_{r-1}^{2}   \mathbb{P}\left(\mathbf{T}|_{r} \sim \mathbf{T}^{\prime}|_{r} \mid D=D^{\prime}=k-2\right) \mu_{k}^{2}\\
    &  = \sum_{k \geqslant 2} \lambda^4 \mathfrak{p}_{r-1}^{2}   \mathbb{P}\left(\mathbf{T}|_{r} \sim \mathbf{T}^{\prime}|_{r} \mid D=D^{\prime}=k-2\right) \mu_{k-2}^{2} =  \lambda^4 g_r \mathfrak p_{r-1}^2
\end{align*}
 where we have used \eqref{eq-relation-mu-k} twice. Combined with \eqref{eq-mathfrak-p-r-upper-bound} and \eqref{eq-mathfrak-p-r-lower-bound}, it yields that
\begin{equation}\label{eq-mathfrak-p-r-inequality}
\lambda^2 g_{r} \mathfrak{p}_{r-1} -    \lambda^4 g_{r}  \mathfrak{p}_{r-1}^{2}  \leq \mathfrak{p}_{r}  \leq  \lambda^2 g_{r} \mathfrak{p}_{r-1}\,.
\end{equation}
 By \eqref{eq-g-r-bound} and $\mathfrak p_r \to 0$ (Lemma~\ref{lem-monotone}), we have that $\mathfrak{p}_{r} = \alpha_\lambda^{ (1+o(1)) r }$ where $o(1)$ vanishes in $r$. Since $\alpha_\lambda<1$ (Lemma~\ref{lem-alpha-lambda}), this implies that $\mathfrak p_r$ decays exponentially. Combining this with \eqref{eq-g-r-bound} and \eqref{eq-mathfrak-p-r-inequality}, we see that $\sum_{i=1}^\infty |\frac{\mathfrak p_{i+1}}{\mathfrak p_i} - \alpha_\lambda| < \infty$, yielding the desired bound.
 \end{proof}
In order to prove non-identifiability, we need to pose some additional structure on two isomorphic trees. To this end, we say a sequence of vertices
 $(v_{i})_{i=0}^{m}$ is a \textit{spine}
 of $T$ if it is a path (we say a sequence of distinct vertices is a path if each neighboring pair is connected by an edge) of $T$ started at $v_0$. Furthermore, for  $v \in T$, we denote by $T_{v}$ the   subtree rooted at $v$. For $r, L\geq 1$, define $\mathfrak p_{r, L} = \P(\mathcal E(\mathbf T, \mathbf T'; r, L))$ where $ \mathcal E( T, T'; r, L)$ is the event that
 \begin{align}\label{eq-def-mathfrak-p-r-L}
    \begin{split}
   & T \sim_{r} T' ,   H(T) , H(T')\leq r+ L ; H(T)  \neq  H(T');\\
   & \text{every vertex in $r'$-th level of $T, T'$ has either 0 or two children for all $r' > r$};\\
    & \exists \text{ spine } (v_{i})_{i=0}^{r-L} \text{ in } T, \text{ such that } |T_{v_{i-1}}   \backslash T_{v_{i}} | \leq L
      \text{ for all }  i\,.
      \end{split}
\end{align}

 \begin{lemma}\label{spine}
  For every $\epsilon > 0$, there exists $L=L_{\epsilon}$ 
depending only on $\epsilon$ such that $\mathfrak p_{r, L} \geq (\alpha_\lambda-\epsilon)^r$ for sufficiently large $r$.
   \end{lemma}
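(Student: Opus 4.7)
The plan is to establish a recursion for $\mathfrak p_{r,L}$ that mirrors the one derived for $\mathfrak p_r$ in the proof of Lemma~\ref{Decay pr}, and then iterate it from $r$ down to a base case at $r=L$ where the required spine degenerates to a single vertex.

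On the event $\mathcal E(\mathbf T,\mathbf T';r,L)$, the spine in $\mathbf T$ picks some child $v_1$ of the root and by the isomorphism $\mathbf T|_r\sim\mathbf T'|_r$ this child is paired with some child $v_1'$ of the root of $\mathbf T'$; the subtree pair $(\mathbf T_{v_1},\mathbf T'_{v_1'})$ then satisfies $\mathcal E(\cdot;r-1,L)$. The decorative conditions transfer cleanly: the depth-$r$ isomorphism restricts to a depth-$(r-1)$ isomorphism, the binary constraint beyond level $r$ translates to beyond level $r-1$, and the spine of length $r-L$ shortens to one of length $(r-1)-L$; moreover, since side-branch subtrees have height at most $L-1$ while the spine subtree has height at least $r-L-1$, for $r$ much larger than $L$ we have $H(\mathbf T)=1+H(\mathbf T_{v_1})$, so the strict height-inequality $H(\mathbf T)\neq H(\mathbf T')$ also transfers to $(\mathbf T_{v_1},\mathbf T'_{v_1'})$. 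The remaining $k-1$ children of the roots in each tree must form matching multisets of (necessarily finite) subtrees whose total size in $\mathbf T$ is at most $L-1$. Setting
\[
\tilde g_L(k)=\P(\text{multisets of }k\text{ i.i.d.\ PGW}(\lambda)\text{ subtrees in }\mathbf T\text{ and in an indep.\ copy match, total size in }\mathbf T\leq L-1),
\]
$\tilde g_L=\sum_{k\ge 0}\mu_k^2\tilde g_L(k)$, the same Bonferroni-type argument as in the proof of Lemma~\ref{Decay pr} (summing over the $k^2$ possible spine-pair choices and using $k^2\mu_k^2=\lambda^2\mu_{k-1}^2$) yields
\[
\mathfrak p_{r,L}\;\geq\;\lambda^2\tilde g_L\,\mathfrak p_{r-1,L}\;-\;\tfrac{1}{2}\lambda^4\tilde g_L\,\mathfrak p_{r-1,L}^2\qquad\text{for all }r>L.
\]

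The analytic heart is the convergence $\tilde g_L\uparrow\gamma_\lambda$ as $L\to\infty$: monotonically $\tilde g_L(k)\uparrow\P(\text{multiset match, all subtrees finite})=\P(\mathbf T\sim\mathbf T'\mid D=D'=k)$, the last equality because two i.i.d.\ infinite PGW trees are isomorphic with probability zero (by the same argument as Lemma~\ref{lem-monotone}); monotone convergence on the outer sum gives $\tilde g_L\uparrow\gamma_\lambda$, so $\lambda^2\tilde g_L\uparrow\alpha_\lambda$. Iterating the displayed recursion $r-L$ times and using $\mathfrak p_{j,L}\leq \mathfrak p_j\lesssim_\lambda\alpha_\lambda^j$ (Lemma~\ref{Decay pr}) to bound $\prod_{j=L}^{r-1}(1-\tfrac{\lambda^2}{2}\mathfrak p_{j,L})\geq c>0$ uniformly in $r$, one obtains $\mathfrak p_{r,L}\geq c\,(\lambda^2\tilde g_L)^{r-L}\,\mathfrak p_{L,L}$. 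The base $\mathfrak p_{L,L}>0$ is witnessed by the following explicit configuration: let $\mathbf T$ be the chain of single children terminating at level $L$ and let $\mathbf T'$ be the same chain with two leaves attached to the level-$L$ vertex, which satisfies all cap conditions ($\sim_L$, heights $L$ vs.\ $L+1$, binary beyond level $L$) and has probability $\mu_0^3\mu_1^{2L}\mu_2>0$. Given $\epsilon>0$, choose $L=L_\epsilon$ with $\lambda^2\tilde g_L\geq\alpha_\lambda-\epsilon/2$; then for $r$ large, $c\,\mathfrak p_{L,L}(\alpha_\lambda-\epsilon/2)^{r-L}\geq(\alpha_\lambda-\epsilon)^r$, completing the proof. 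The main obstacle is the convergence $\tilde g_L\to\gamma_\lambda$, i.e.\ verifying that restricting to small side branches does not reduce the asymptotic per-step probability below $\alpha_\lambda$; everything else is a routine adaptation of the Lemma~\ref{Decay pr} iteration.
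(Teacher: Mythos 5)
Your proposal is correct and follows essentially the same route as the paper: the same root decomposition of $\mathcal E(\mathbf T,\mathbf T';r,L)$ into a tall ``spine'' subtree pair satisfying $\mathcal E(\cdot;r-1,L)$ plus small matching remainders, the same key limit $\tilde g_L=\P(\mathbf T\sim\mathbf T',|\mathbf T|\leq L)\uparrow\gamma_\lambda$ via almost-sure finiteness of isomorphic PGW pairs, and iteration down to a positive base case at $r=L$. The only (harmless) differences are cosmetic: the paper exploits the uniqueness of the tall-subtree pair to get an exact recursion $\mathfrak p_{r,L}=\lambda^2\P(A(\mathbf T,\mathbf T'))\,\mathfrak p_{r-1,L}$ instead of your Bonferroni lower bound with a second-order correction (which you correctly control via Lemma~\ref{Decay pr}), and you exhibit an explicit configuration for $\mathfrak p_{L,L}>0$ where the paper simply invokes positivity.
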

  \begin{proof}
     For two rooted trees $T$ and $T'$, define the event
    $A(T,T')=\{ T \sim T', |T| \leq L\}$.
  Note that $ \mathcal E( T, T'; r, L) $ can be defined inductively. For $r>L$, we have that $\mathcal E( T, T'; r, L)$ occurs if and only if there exists a \emph{unique} pair $(v_1, v_1')\in (T, T')$ such that $\mathcal E( T_{v_1}, T'_{v'_1}; r-1, L)$ occurs and that $A(T\setminus T_{v_1}, T' \setminus T'_{v'_1})$ occurs. Therefore, denoting by $D, D'$ the numbers of children for the roots of $\mathbf T, \mathbf T'$ and recalling \eqref{eq-def-mu-k}, we have that
  \begin{align}
  \mathfrak p_{r, L} & = \sum_{k = 1}^\infty \sum_{1\leq v_1, v'_1 \leq k} \mu_k^2 \P(\mathcal E(\mathbf T_{v_1}, \mathbf T'_{v'_1}; r-1, L)) \P(A(\mathbf T\setminus \mathbf T_{v_1}, \mathbf T' \setminus \mathbf T'_{v'_1}) \mid D = D' = k ) \nonumber\\
  & = \mathfrak p_{r-1, L} \lambda^2 \sum_{k = 1}^\infty \mu_{k-1}^2  \P(A(\mathbf T, \mathbf T') \mid D = D' = k-1 ) = \mathfrak p_{r-1, L} \lambda^2 \P(A(\mathbf T, \mathbf T'))\,, \label{eq-mathfrak-p-r-L-recursion}
  \end{align}
  where the second inequality follows from \eqref{eq-relation-mu-k} and the fact that the conditional law of $(\mathbf T\setminus \mathbf T_{v_1}, \mathbf T' \setminus \mathbf T'_{v'_1})$ given  $D = D' = k$ is the same as the conditional law of $(\mathbf T, \mathbf T')$ given $D = D' = k-1$. By Lemma~\ref{lem-monotone}, we see that $\P(A(\mathbf T, \mathbf T') \mid \mathbf T \sim \mathbf T') \to 1$ as $L \to \infty$. Therefore, for any $\epsilon>0$, there exists $L = L_{\epsilon}$ such that $\P(A(\mathbf T, \mathbf T')) \geq \gamma_\lambda - \epsilon/(2\lambda^2)$. Combined with \eqref{eq-mathfrak-p-r-L-recursion}, this gives that
  $$\mathfrak p_{r, L} \geq (\alpha_\lambda - \epsilon/2)^{r-L} \mathfrak p_{L, L} \geq c_L (\alpha_\lambda - \epsilon/2)^{r-L} $$ where $c_L>0$ depending only on $(L, \lambda)$. This completes the proof.
 \end{proof}

 We also need an estimate that compares local neighborhoods of Erd\H{o}s-R\'enyi graphs to PGW trees. This has been well-understood and fairly straightforward. For instance, a straightforward extension of \cite[Lemma 2.2]{RW10} leads to the following lemma (so we omit the proof). For each vertex $v\in \mathcal G$, let $\mathsf C_v$ be the component of $v$ in $\mathcal G$ with root $v$. 

 \begin{lemma}\label{lem-compare-extension}
 Let $\mathbf T_i$'s be independent PGW($\lambda$)-trees.
 For $k\geq 1$ and $v_1, \ldots, v_k \in \mathcal G$, and for rooted trees $\tau_1, \ldots, \tau_k$ with $\sum_{i=1}^k |\tau_i| = o(\sqrt{n})$, we have that as $n \to \infty$,
 \begin{equation*}
  \P(\mathsf C_{v_i} \sim \tau_i \text{ for } 1\leq i\leq k \text { and }   \mathsf{C}_{v_{i}} \text{'s are  disjoint} ) = (1 + o(1)) \P(\mathbf T_i \sim \tau_i \text{ for } 1\leq i\leq k)\,.
 \end{equation*} 
 \end{lemma}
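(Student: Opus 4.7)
The plan is a direct enumeration argument that expresses both probabilities as products over labeled realizations and then performs an asymptotic expansion using the hypothesis $s := \sum_i |\tau_i| = o(\sqrt{n})$.

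First I would decompose the left-hand side as a sum over all tuples $(T_1, \ldots, T_k)$ of vertex-disjoint labeled rooted trees in $\mathcal{G}$ with $T_i \sim \tau_i$ and root $v_i$. For each such tuple, the event $\{\mathsf{C}_{v_i} = T_i \text{ for every } i\}$ requires exactly the $s-k$ edges of $\bigcup_i T_i$ to be present and all remaining pairs either inside $\bigcup_i V(T_i)$ or between $\bigcup_i V(T_i)$ and $V(\mathcal{G}) \setminus \bigcup_i V(T_i)$ to be absent, contributing $p^{s-k}(1-p)^N$ with $p = \lambda/n$ and $N = s(n-s) + \binom{s}{2} - (s-k)$. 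Since $pN = \lambda s + O(\lambda s^2/n)$ and $Np^2 = O(s/n)$, the hypothesis $s^2/n \to 0$ gives $(1-p)^N = \exp(-pN + O(Np^2)) = (1+o(1)) e^{-\lambda s}$.

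Next I would count the realizations. Choosing disjoint vertex sets $V_i \ni v_i$ of size $|\tau_i|$ produces the multinomial coefficient $\binom{n-k}{|\tau_1|-1,\ldots,|\tau_k|-1,\,n-s}$, which equals $(1+o(1)) n^{s-k} / \prod_i (|\tau_i|-1)!$ under $s = o(\sqrt n)$. On each $V_i$, the number of labeled rooted trees with root $v_i$ isomorphic to $\tau_i$ is $(|\tau_i|-1)! / |\mathrm{Aut}(\tau_i)|$, where $\mathrm{Aut}(\tau_i)$ denotes the group of root-fixing automorphisms. Combining these with $(np)^{s-k} = \lambda^{s-k}$ and applying the combinatorial identity $\#\tau / b(\tau) = 1/|\mathrm{Aut}(\tau)|$ (the same identity underlying the formula for $\mathbb{P}(\mathbf{T} \sim \tau)$ derived in Lemma~\ref{lem-alpha-lambda}), everything collapses to $(1+o(1)) \prod_i \frac{\#\tau_i}{b(\tau_i)} e^{-\lambda|\tau_i|} \lambda^{|\tau_i|-1} = (1+o(1))\, \mathbb{P}(\mathbf{T}_i \sim \tau_i \text{ for all } i)$.

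The main difficulty — such as it is — lies in confirming that all $(1+o(1))$ corrections are uniform: the expansion of $(1-p)^N$, the replacement of the falling factorial $(n-k)!/(n-s)!$ by $n^{s-k}$, and the product over arbitrarily many shapes $\tau_i$. All three reduce to the single requirement $s^2/n \to 0$, which is exactly the hypothesis $s = o(\sqrt n)$; this is the reason $\sqrt n$ is the natural cutoff for tree-approximation of Erd\H{o}s-R\'enyi components and why deeper neighborhoods (where components may exceed this size) require the more delicate arguments of Sections~\ref{sec:identifiability} and~\ref{sec:ER-property}.
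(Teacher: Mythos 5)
Your proof is correct, and it is essentially the argument the paper has in mind: the paper omits the proof of Lemma~\ref{lem-compare-extension}, pointing to a straightforward extension of \cite[Lemma 2.2]{RW10}, which is exactly this direct enumeration of labeled tree realizations with the factors $p^{s-k}(1-p)^{N}$, falling-factorial vertex counts, and the identity $\#\tau/b(\tau)=1/|\mathrm{Aut}(\tau)|$ matching $\P(\mathbf T\sim\tau)=\frac{\#\tau}{b(\tau)}e^{-\lambda|\tau|}\lambda^{|\tau|-1}$ from Lemma~\ref{lem-alpha-lambda}. Indeed, the same style of computation appears explicitly in the paper's proof of Lemma~\ref{lemma-aux-tree-vs-GW-tree}, and your error analysis correctly identifies $s^{2}/n\to 0$ as the only requirement.
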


\subsection{Proof of non-identifiability}

 In this section we prove the non-identifiability. We set $r= \frac{(1- \epsilon_0) \log n}{\log \alpha_\lambda^{-1}}$  so that
\begin{equation}\label{eq-def-r-non-identifiable}
\alpha_{\lambda}^{2r}  =  n^{-2(1-\epsilon_0)} \text { for an arbitrary fixed small } \epsilon_0>0 \,.
\end{equation}
Moreover,  we choose $\epsilon= \alpha_{\lambda}(1- \alpha_{\lambda}^{  2\epsilon_0/ [ 3(1-\epsilon_0)]} )  $   and $L=L_{\epsilon}$ in Lemma \ref{spine} so that 
\begin{equation}\label{eq-alpha-epsilon}
  (\alpha_\lambda- \epsilon)^{2r}  =  n^{-2(1-\epsilon_0/3)}   \,.
\end{equation}
In order to prove non-identifiability, we will construct a \emph{blocking configuration} as in \cite{MR19} whose existence certifies non-identifiability, and then we need to show that with high probability such a blocking configuration exists.

\noindent {\bf Construction of blocking configuration.}  We refer to Figure \ref{p1} for an illustration of the construction.  We say $(\mathsf C_v, \mathsf C_u)$ is a blocking configuration if it satisfies the following property:
\begin{itemize}
\item $\mathsf C_v, \mathsf C_u$ are disjoint trees and there is a line segment $\mathrm{Line}_{w, w'}$ of length $L$ with endpoints $w, w'$ and $w$ is connected to $v$.
\item The event $\mathcal E(\mathsf C_v \setminus \mathrm{Line}_{w, w'}, \mathsf C_u; 2r, L)$ holds where $\mathcal E$ is defined as in \eqref{eq-def-mathfrak-p-r-L}.
\end{itemize}
We next show that when a blocking configuration exists for some pair $(\mathsf C_v, \mathsf C_u)$, the graph is non-identifiable from the empirical profile for $r$-neighborhoods. Indeed, if we remove the edge $(w, v)$ and add an edge $(w, u)$, i.e., cut $\mathrm{Line}(w, w')$ from $v$ and attach it to $u$, then we claim that:
\begin{enumerate}[(i)]
\item the empirical profile for $r$-neighborhoods is unchanged;
\item the empirical profile for $(2(r+L))$-neighborhoods is changed.
\end{enumerate}
Assuming the claim, we see that the empirical profile of $r$-neighborhoods does not determine the whole graph up to isomorphism since it does not even determine a unique empirical profile for $(2(r+L))$-neighborhoods.

We now prove (i). Note that the `cut-attach' procedure only changes $r$-neighborhoods for vertices in $\mathsf N_r(v)$ and $\mathsf N_r(u)$. Since $(\mathsf C_v\setminus \mathrm{L}_{w, w'}) \sim_{2r} \mathsf C_u$, we can let $\phi$ be an isomorphism between these two trees. For a vertex $z$, we let $\widetilde {\mathsf N_r}(z)$  be the $r$-neighborhood of $z$ after the `cut-attach' procedure. Then it is clear that $\mathsf{N}_r(z)  \sim \widetilde {\mathsf N_r}(z) $ for all $z \in \mathrm{Line}_{w,w'}$,
$\mathsf{N}_r(v')\sim \widetilde {\mathsf N_r}(\phi(v'))$   for all  $v' \in \mathsf N_r(v)\backslash \mathrm{L}_{w, w'}$ and  $\mathsf{N}_r(u') \sim  \widetilde {\mathsf N_r}(\phi^{-1}(u'))$ for all $u' \in \mathsf N_r(u )$. This implies (i).

We next prove (ii). This is where we need the additional structure in the definition of \eqref{eq-def-mathfrak-p-r-L}. Suppose without loss of generality that $H(\mathsf C_v \setminus \mathrm{Line}_{w, w'}) > H(\mathsf C_u)$. By properties in \eqref{eq-def-mathfrak-p-r-L}, we see that the diameter of $\mathsf C_v$ is $L + H(\mathsf C_v \setminus \mathrm{Line}_{w, w'}) > L + H(\mathsf C_u)$ which is the diameter of $\mathsf C_u \cup \mathrm{Line}_{w, w'} \cup \{(u, w)\}$. This implies (ii), since after the `cut-attach' procedure the maximal diameter in the components of $u$ and $v$ is changed.

\noindent {\bf Existence of blocking configuration.}
For $v, u\in \mathcal G$,  let $X_{v,u}$ be the  indicator function that $(\mathsf C_v, \mathsf C_u)$ is a blocking configuration. Let $N = \sum_{v,u} X_{v,u}$.
We need to show that with high probability $N\geq 1$. To this end, we need some input from PGW trees.
Denote by $\mathrm{BConf}$ the collection of all pairs of rooted unlabeled trees which are blocking configurations. That is, $\mathrm{BConf}$ consists of the equivalent classes of blocking configurations and the equivalence is given by rooted isomorphism.
 Let $\mathbf T, \mathbf T'$ be two independent PGW($\lambda$)-trees. For $(\tau, \tau') \in \mathrm{BConf}$, the roots of $\tau, \tau'$ have degree at most $L$. As a result, under the law of PGW($\lambda$)-tree, cutting a line of size $L$ from the root of $\tau$ only changes its probability density up to a factor depending on $(L, \lambda)$. Therefore, by Lemma~\ref{spine}  and   \eqref{eq-alpha-epsilon} we have that for some constant $c = c(L, \lambda)$
\begin{equation}\label{eq-prob-BConf}
\sum_{(\tau, \tau')\in \mathrm{BConf}}\P(\mathbf T\sim \tau, \mathbf T'\sim \tau') \geq c (\alpha - \epsilon)^{2r} = c n^{-2(1 - \epsilon_0/3)}\,.
\end{equation}
We also note that $|\tau|, |\tau'| = O(\log n)$  whenever $(\tau,\tau') \in \mathrm{BConf}$ and we will use this fact repeatedly (e.g., together with Lemma~\ref{lem-compare-extension}).

We are now ready to employ the second moment method in order to show $N \geq 1$. We have
\begin{align*}
  \E [N^2] &=   \E [N] + \sum_{(u_1,u_2)\neq (u_3,u_4)} \E( X_{u_1,u_2} X_{u_{3},u_4} ) \\
  & \leq \E N+ n^{4} \E(X_{v_{1}, v_{2}}X_{v_{3}, v_{4} }) + n^{3} \E(X_{v_{1} , v_{2}}X_{v_{1}, v_{4}} )+ n^{3} \E(X_{v_{1}, v_{2}}X_{v_{3}, v_{2}} ) \,,
\end{align*}
where $v_{1}, \ldots, v_{4}$ are pairwise different. Thus, it suffices to show that
\begin{align*}
 & \text{ (a) } \lim_{n \to \infty} n^2  \E(X_{v_{1},v_{2}}) = \infty  \  ; \ \text{ (b) }
 \limsup_{n \to \infty} \frac{\E(X_{v_{1}, v_{2}}X_{v_{3}, v_{4} })}{ (\E X_{v_{1},v_{2}})^2  }
          \leq 1  \ ; \\
& \text{ (c) } \lim_{n \to \infty} \frac{ \E(X_{v_{1} , v_{2}}X_{v_{1}, v_{4}} ) }{ n (\E X_{v_{1},v_{2}})^2  } = 0 \text{ and } \lim_{n \to \infty} \frac{ \E(X_{v_{1}, v_{2}}X_{v_{3}, v_{2}} ) }{ n (\E X_{v_{1},v_{2}})^2  } = 0   \,.
\end{align*}
Indeed, having verified (a), (b) and (c),  we can then apply Cauchy-Schwarz inequality and get that  $\P(N \geq 1) \geq \frac{ [\E N]^2 }{ \E [ N^2]} \to 1 $ as $n \to \infty$.  In what follows we denote by $\mathbf T_i$'s as independent PGW($\lambda$)-trees.

\noindent \emph{Proof of (a)}. By Lemma~\ref{lem-compare-extension} and \eqref{eq-prob-BConf}, we get that
\begin{equation}\label{eq-N-exp-lower-bound}
\E X_{v_1, v_2} = (1+o(1)) \sum_{(\tau_1, \tau_2)\in \mathrm{BConf}}\P(\mathbf T_1\sim \tau_1, \mathbf T_2\sim \tau_2) \mbox{ and } \E N  \geq (c+o(1)) n^{2\epsilon_0/3}\,,
\end{equation}
which implies (a).

\noindent \emph{Proof of (b)}.  Write $\Omega$ as the event that $\mathsf C_{v_1}, \ldots, \mathsf C_{v_4}$ are mutually disjoint. By Lemma~\ref{lem-compare-extension} and \eqref{eq-prob-BConf}, we get that
\begin{align*}
  \E(X_{v_{1}, v_{2}}X_{v_{3}, v_{4} } \mathbf 1_{\Omega}) = (1+o(1)) \E (X_{v_1, v_2}) \E (X_{v_3,v_4}) = (1+o(1)) (\E X_{v_1, v_2})^2.
\end{align*}
Therefore, in order to prove (b) it suffices to show that
\begin{equation}\label{eq-bound-Omega-c}
  \E(X_{v_{1}, v_{2}}X_{v_{3}, v_{4} } \mathbf 1_{\Omega^c}) =  o((\E X_{v_1, v_2})^2).
\end{equation}
By the definition of blocking configuration (see properties in \eqref{eq-def-mathfrak-p-r-L}), for $(\tau_1, \tau_2) \in \mathrm{BConf}$, there is only one vertex $w'\in \tau_1 \cup \tau_2$ whose $L$-neighborhood is a line $\mathrm{Line}_{w, w'}$ of length $L$ (with $w'$ being one endpoint), and thus the root of $\tau_1$ is the only vertex  which is connected to $w$ (the other endpoint of $\mathrm{Line}_{w, w'}$).  Therefore, when   $  X_{v_{1}, v_{2}}X_{v_{3}, v_{4}} =1 $,  we  have $v_{3} \notin \mathsf{C}_{v_{1}} \cup \mathsf{C}_{v_{2}}$ and $v_{4} \notin \mathsf{C}_{v_{1}}$. Thus, if in addition $
\Omega$ does not occur, we must have that $ v_{4} \in \mathsf{C}_{v_{2}}$.
By Lemma~\ref{lem-compare-extension} and the fact that blocking configuration has size $O(\log n)$, we get that
\begin{align*}
  & \E[X_{v_{1}, v_{2}}X_{v_{3}, v_{4} } \mathbf 1_{ \Omega^{c} }  ] 
    \leq \sum_{ \substack{ (\tau_{1}, \tau_{2}) \in \mathrm{BConf} \\ (\tau_{3} , \tau_{4}) \in  \mathrm{BConf}   } }  \P [  \mathsf C_{v_{j}} \sim \tau_{j}, 1 \leq j \leq 4; \mathsf C_{v_1}, \mathsf C_{v_2}, \mathsf C_{v_3} \mbox{ disjoint } ; v_{4} \in \mathsf{C}_{v_{2}} ] \\
  &  =(1+o(1))  \sum_{   (\tau_{1}, \tau_{2}) \in \mathrm{BConf}  }  \P( \mathbf T_1 \sim \tau_{1}, \mathbf T_2 \sim \tau_{2} )  \sum_{  (\tau_{3} , \tau_{4}) \in  \mathrm{BConf}   }  \P( \mathbf T_3 \sim \tau_{3}) 1_{ \{\tau_{4} \in  \mathcal{E}(\tau_{2})\}} \frac{ O(\log n)  }{n} \,,
  \end{align*}
 where $\mathcal E(\tau_{2})$ consists of all (equivalent classes for) rooted trees that arise from $\tau_2$ by re-choosing the root as an arbitrary vertex in $\tau_2$, 
and $O(\log n/n)$ factor comes from the probability that $v_4 \in \mathsf C_{v_2}$.   
Noting that  $|\mathcal E(\tau_{2})| = O(\log n)$, we see that the number of valid choices for $\tau_3$ in the preceding sum is $O(\log n)$. Thus,
$$\E[X_{v_{1}, v_{2}}X_{v_{3}, v_{4} } \mathbf 1_{ \Omega^{c} }  ] \leq  \sum_{   (\tau_{1}, \tau_{2}) \in \mathrm{BConf}  }  \P( \mathbf T_1 \sim \tau_{1}, \mathbf T_2 \sim \tau_{2} )  \max_{  (\tau_{3} , \tau_{4}) \in  \mathrm{BConf}}  \P( \mathbf T_3 \sim \tau_{3}) \frac{ O((\log n)^2)  }{n}\,.$$
In addition,
\begin{equation}\label{eq-max-p-r-prob}
\max_{  (\tau_{3} , \tau_{4}) \in  \mathrm{BConf}}  \P( \mathbf T_3 \sim \tau_{3}) \leq \sqrt{\mathfrak p_{2r}} = O(\alpha_\lambda^{r})
\end{equation}
where the last equality follows from Lemma \ref{Decay pr}. Altogether, we get that
$$\E[X_{v_{1}, v_{2}}X_{v_{3}, v_{4} } \mathbf 1_{ \Omega^{c} }  ] \leq   \P( (\mathbf T_1 , \mathbf T_2) \in \mathrm{BConf}) \frac{ O((\log n)^2)  \alpha_\lambda^{r}}{n}\,.$$
Combined with \eqref{eq-def-r-non-identifiable} and \eqref{eq-N-exp-lower-bound}, this yields \eqref{eq-bound-Omega-c} as required.

 \noindent \emph{Proof of (c)}.  When $X_{v_{1} , v_{2}}X_{v_{1}, v_{4}} =1$, (using the uniqueness of the line graph of length $L$ in the blocking configuration as argued in Proof of (b)) we must have either $\mathsf{C}_{v_1}, \mathsf C_{v_2}, \mathsf C_{v_4}$ are disjoint (which we denote by the event $\tilde \Omega$) or $v_{4} \in \mathsf C_{v_2}$. By Lemma~\ref{lem-compare-extension}, we get 
  \begin{align*}
    \E[X_{v_{1}, v_{2}}X_{v_{1}, v_{4} } \mathbf 1_{\tilde \Omega}  ] & = (1+o(1) ) \sum_{ \substack{ (\tau_{1}, \tau_{2}) \in \mathrm{BConf} \\ (\tau_{1} , \tau_{4}) \in \mathrm{BConf}  } } \P (\mathbf T_j \sim \tau_{j} \mbox{ for } j =1,2,4) \\
    & = \sum_{ (\tau_{1}, \tau_{2}) \in \mathrm{BConf} } \P (\mathbf T_1 \sim \tau_{1}, \mathbf T_2 \sim \tau_2) \P(\mathbf T_4 \sim \mathbf \tau_2)
  \end{align*}
  where we have used the fact that the valid configurations in the summation satisfy $\tau_4 = \tau_2$. Therefore, combined with \eqref{eq-N-exp-lower-bound} and \eqref{eq-max-p-r-prob}, it yields that
$$ \E[X_{v_{1}, v_{2}}X_{v_{1}, v_{4} } \mathbf 1_{\tilde \Omega}  ] \leq \P((\mathbf T_1, \mathbf T_2) \in \mathrm{BConf}) \sqrt{\mathfrak p_{2r}} = o(n (\E X_{v_1, v_2})^2)\,.$$
We next estimate the expectation on the event $\tilde \Omega^c$. We have
  \begin{align*}
    \E[X_{v_{1}, v_{2}}X_{v_{1}, v_{4} } \mathbf 1_{ \tilde \Omega^{c} }  ]
   & \leq \sum_{  (\tau_{1}, \tau_{2}) \in \mathrm{BConf}    } \P (\mathbf T_1 \sim \tau_1, \mathbf T_2 \sim \tau_2) O(\frac{\log n}{n}) = o(n (\E X_{v_1, v_2})^2)\,,
     \end{align*}
     where the $O(\frac{\log n}{n})$ comes from the event that $v_4\in \mathsf C_{v_2}$ (and $|\mathsf C_{v_2}| = O(\log n)$ by definition of blocking configuration), and the last equality used \eqref{eq-N-exp-lower-bound}.

When $X_{v_{1}, v_{2}}X_{v_{3}, v_{2}}=1$, we must have that  $\mathsf{C}_{v_1}, \mathsf C_{v_2}, \mathsf C_{v_3}$ are disjoint (again using the uniqueness of the line graph of length $L$ in the blocking configuration). Then the bound on $\E (X_{v_{1}, v_{2}}X_{v_{3}, v_{2}})$ can be derived in the same way as that for $\E[X_{v_{1}, v_{2}}X_{v_{1}, v_{4} } \mathbf 1_{\tilde \Omega}  ]$, completing the proof of (c) and thus completing the proof of non-identifiability.

\section{Proof of identifiability}\label{sec:identifiability}

The rest of the paper is devoted to the proof of identifiability when 
\begin{equation}\label{eq-rho-r-identifiable}
r = \frac{(1+\epsilon_0) \log n}{\log \alpha_\lambda^{-1}} \,, \  \alpha_\lambda^r < \alpha_\lambda^{\rho r} \leq n^{-1 - \frac{\epsilon_0}{2}} \mbox{ and } \frac{\log(\alpha_{\lambda}^{-1}) }{ 2(1+\epsilon_0)} > \log(\lambda) \,,
\end{equation} 
for an arbitrarily fixed $\epsilon_0>0$ and  $\rho = \rho(\epsilon_0) < 1$. 
The third inequality is possible for small $\epsilon_0$ since
 $\frac{\log(\alpha_{\lambda}^{-1}) }{ 2 } > \log(\lambda) $ by \eqref{eq-alpha-lambda-bound} (note that identifiability is harder for smaller $r$, so it is fine to assume $\epsilon_0$ to be small as incorporated by the third inequality), and we made such assumption for convenience of controlling the volume of the $r$-neighborhood as in Lemma~\ref{lemma-complexity-of-Nr}. 
We now explain how to relate Lemma~\ref{Decay pr} to the above choice of $r$.  We say a neighborhood $\mathsf N_r(v)$ has \textit{two $r$-arms} (or we say $v$ has \textit{two $r$-arms}) if there are two  paths of length $r$ which are both rooted at $v$ and intersect only at $v$. 
 By Lemma~\ref{Decay pr}, we would \emph{essentially} get that the probability for two $r$-neighborhoods with two $r$-arms to be isomorphic is at most $o(n^{-2})$ and as a result \emph{essentially} each $r$-neighborhood with two arms is unique. (Here the word ``essentially'' refers to omitting the consideration for the scenario of $\mathsf N_r(v) \sim \mathsf N_r(u)$ when both neighborhoods have two $r$-arms but $u, v$ are contained in a short cycle; see Figure~\ref{Fig-example-cycle}.)
\emph{Intuitively}, this would be sufficient for recovery since a vertex $v$ is either contained in some $r$-neighborhood (not necessarily rooted at $v$) with two $r$-arms or the component of $v$ is contained in some $r$-neighborhood.

\begin{figure}[htbp]
  \centering \includegraphics[scale=0.9]{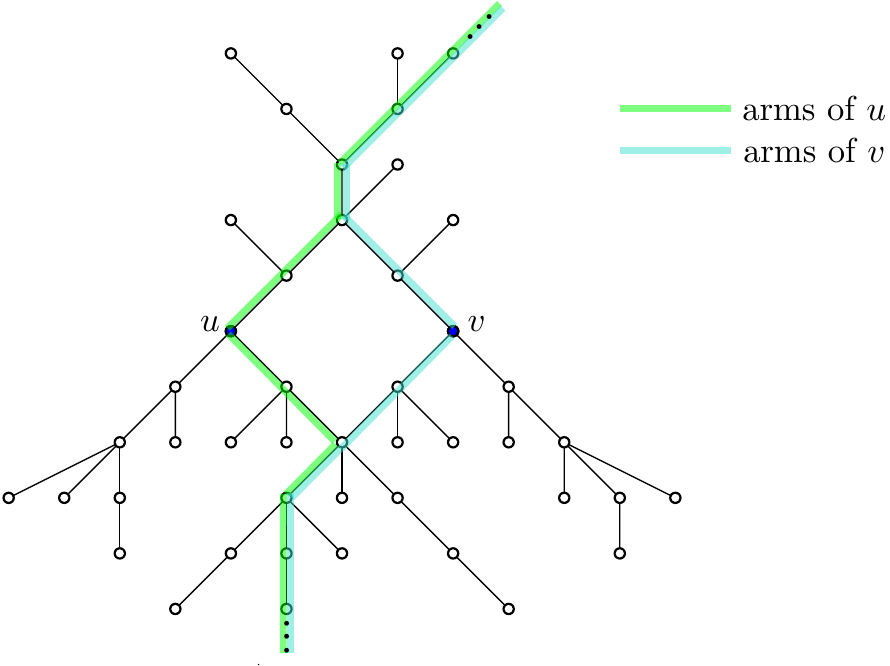}
\caption{
An illustration for  isomorphic neighborhoods
 with two arms centered on a  cycle
}
\label{Fig-example-cycle}
\end{figure}

We now describe how we treat small components as our  \emph{preprocessing} procedure. We say a vertex $v$ is \emph{degenerate} if $\mathsf N_r (v) = \mathsf N_{r-1}(v)$. Note that for a degenerate $v$, we have that
\begin{equation}\label{eq-degenerate-property}
\mathsf N_r(v) = \mathsf C_v \mbox{ and } \mathsf N_r(u) \sim \mathsf N_r(u; \mathsf C_v) \mbox{ for all } u\in \mathsf N_r(v)\,,
\end{equation}
where $\mathsf N_r(u; \mathsf C_v)$ is the $r$-neighborhood of $u$ in $\mathsf C_v$.
 Recall that we have assumed an ordering on $V$ (in the paragraph for notation convention).
 We set $U_1 = U_2 = \emptyset$ and iteratively apply the following procedure:
\begin{itemize}
\item If $V\setminus U_2 = \emptyset$, stop. Otherwise, take the minimal degenerate $v\in V\setminus U_2$, and choose a subset $A \subset V$ such that $\{\mathsf N_r(u; \mathsf{C}_v): u\in \mathsf C_v\} = \{\mathsf N_r(u): u\in A\}$ 
(this is possible due to \eqref{eq-degenerate-property});
\item Add vertex $v$ to $U_1$ and add vertices in $A$ to $U_2$.
\end{itemize}
At the end of our procedure, we have that $\cup_{v\in U_1} \mathsf C_v$ contains all degenerate vertices (it may also contain some non-degenerate vertices as well) and
$$\{\mathsf N_r(u): u\in \cup_{v\in U_1} \mathsf C_v\} = \{\mathsf N_r(u): u\in U_2\}\,.$$ Therefore, if we can identify the graph from $\{\mathsf N_r(u): u\in V\setminus U_2\}$ up to isomorphism, then adding disjoint components $\{\mathsf C_v: v\in U_1\}$ to it yields the original graph up to isomorphism. For this reason, in what follows we assume without loss of generality that all vertices are not degenerate (or equivalently, we have removed components of degenerate vertices using the preprocessing
procedure above).

We next describe our recovery procedure, whose success relies on certain structural properties for the Erd\H{o}s-R\'enyi graph which we discuss later. We say a vertex $v$ is \emph{good} if $\mathsf N_{\rho r}(v)$ is unique (among $\{\mathsf N_{\rho r}(u): u\in \mathcal G\}$), and we let $V_{\mathrm {g}}$ be the collection of all good vertices (crucially, a good vertex can be regarded as labeled since its neighborhood is unique).
Write $V_{\mathrm{b}} = V\setminus V_{\mathrm{g}}$ as the collection of \emph{bad} vertices. For each bad component $\mathsf C_{\mathrm b}$, i.e., a component in the induced subgraph of $\mathcal G$ on $V_{\mathrm b}$, let  $\partial_{\mathrm{e}}\mathsf C_{\mathrm{b}} = \{w\in V \setminus \mathsf C_{\mathrm b}: (w, u)\in \mathcal G \mbox{ for some } u\in \mathsf C_{\mathrm{b}}\}$ be the \textit{external boundary} of $\mathsf{C}_{\mathrm{b}}$ (we comment that by definition $\partial_{\mathrm{e}}\mathsf C_{\mathrm{b}} \subset V_{\mathrm{g}}$) and we let $\mathsf D(\mathsf{C}_{\mathrm{b}})$  be a graph on  $\mathsf C_{\mathrm b} \cup \partial_{\mathrm{e}} \mathsf C_{\mathrm b}$ which contains all edges within $\mathsf C_{\mathrm b}$ and all edges between $\mathsf C_{\mathrm b}$ and $\partial_{\mathrm{e}} \mathsf C_{\mathrm b}$. We would like to consider the empirical profile for $\mathsf D(\mathsf C_{\mathrm b})$ with $\mathsf C_{\mathrm b}$ ranging over all bad components. To this end, we let $\Psi$ be a mapping that maps each graph to its equivalence class where the equivalence is given by isomorphism that preserves good vertices. That is to say, we view $\Psi(\mathsf D(\mathsf C_{\mathrm b}))$ as a graph with vertices in $\partial_{\mathrm{e}} \mathsf C_{\mathrm b}$ labeled but with vertices in $\mathsf C_{\mathrm b}$ unlabeled. 
Define the profile $\mathcal D_{\mathrm b} = \{\Psi(\mathsf D(\mathsf C_{\mathrm b})): \mathsf C_{\mathrm b} \mbox{ is a bad component}\}$. Note that $\mathcal{D}_{\mathrm{b}}$ may be a set with multiplicity  since we may have multiple copies for a certain $\Psi(\mathsf D(\mathsf C_{  \mathrm b}))$ due to the fact that vertices in $\mathsf C_{ \mathrm b}$ are not labeled.
 
Our key intuition is to recover bad components from good vertices. To formalize this, for each $x\in V_{\mathrm g}$ let $U_x$ be the collection of vertices whose $\rho r$-neighborhoods are contained in $\mathsf N_{r-1}(x)$. We remark that in the definition above we require to be contained in $\mathsf{N}_{r-1}(x)$ instead of in $\mathsf{N}_{r}(x)$ for the reason that the former event is measurable with respect to $\mathsf N_r(x)$ but the latter is not. Write $U_{x, \mathrm g} = U_x \cap V_{\mathrm g}$ and $U_{x, \mathrm b} = U_x \cap V_{\mathrm b}$. 
We say that $\mathsf C_{x, \mathrm b}$ is a bad component in $\mathsf N_{r}(x)$ if
\begin{itemize}
\item $\mathsf C_{x, \mathrm b}$ is connected in the subgraph induced on $U_{x, \mathrm b}$;
\item $\partial_{\mathrm{e}} \mathsf C_{x, \mathrm b} \subset U_{x, \mathrm g}$ and $x\in \partial_{\mathrm{e}} \mathsf C_{x, \mathrm b} $ where
$$\partial_{\mathrm{e}} \mathsf C_{x, \mathrm b}  =  \{w\in \mathsf N_r(x) \setminus \mathsf C_{x, \mathrm b}: w \mbox{ is neighboring some } w' \in \mathsf C_{x, \mathrm b}\}\,.$$
\end{itemize}
Let $\mathfrak D_{x, \mathrm b}$ be the collection of $\mathsf D(\mathsf C_{x, \mathrm b})$ for all bad component $\mathsf C_{x, \mathrm b}$ in $\mathsf N_{r}(x)$, where (as above) $\mathsf D(\mathsf C_{x, \mathrm b})$ is a graph on $\mathsf C_{x, \mathrm b} \cup \partial_{\mathrm{e}} \mathsf C_{x, \mathrm b}$ which contains all edges within $\mathsf C_{x, \mathrm b}$ and all edges between $\mathsf C_{x, \mathrm b}$ and $\partial_{\mathrm{e}} \mathsf C_{x, \mathrm b}$.
We need to be careful about what we can recover exactly from a rooted neighborhood where all other vertices are not labeled. Since from $\mathsf N_r(v)$ we know the $\rho r$-neighborhoods for vertices in $\partial_{\mathrm e} \mathsf C_{x, \mathrm b}$ and since these vertices are good, we can assume that we know the labels for vertices in $\partial_{\mathrm{e}} \mathsf C_{x, \mathrm b}$; but we do not know labels for vertices in $\mathsf C_{x, \mathrm b}$. This echoes the definition of $\Psi$ from above. 
Similarly, $\mathfrak D_{x, \mathrm b}$ may be a set with multiplicity.
Furthermore, we define $\mathfrak D'_{x, \mathrm b}$ to be the collection of $\mathsf D(\mathsf C_{x, \mathrm b}) \in \mathfrak D_{x, \mathrm b}$ such that 
$
\mathsf D(\mathsf C_{x, \mathrm b}) \not\in \mathfrak D_{y, \mathrm b}$ for any $y \in \partial_{\mathrm{e}} \mathsf C_{x, \mathrm b}$ which is less than  $x$,
where we recall that we have fixed an arbitrary ordering on $V$.

We are now ready to recover our original graph by adding edges between $V_{\mathrm g}$ and then adding small components incident to $V_{\mathrm g}$ as follows:
\begin{itemize}
\item For any pair of good vertices, whether there is an edge can be determined by the $r$-neighborhood for either of them and we then add an edge if there is one.
\item For each good vertex $x$ and each $\mathsf D_{x, \mathrm b} (= \mathsf D(\mathsf C_{x, \mathrm b}))  \in \mathfrak D'_{x, \mathrm b}$, we add a copy of $\mathsf D_{x, \mathrm b}$ where bad vertices (i.e., those in $\mathsf C_{x, \mathrm b}$) in each such added copy are disjoint.
\end{itemize}
We denote by $\mathcal G' = (V', E')$ as the graph obtained from the preceding construction.

\noindent {\bf Running time analysis.} In our procedure, most operations are standard and can be performed in polynomial time except for the algorithm of testing isomorphism between two rooted neighborhoods. 
 So far there is no polynomial-time algorithm known to test isomorphism for general graphs, and the best result is a quasi-polynomial-time algorithm \cite{Babai16}. However, for $r$-neighborhoods under consideration in our problem, they are trees or tree-like graphs and efficient algorithms are known for isomorphism. More precisely, polynomial-time algorithms have been proposed to test isomorphism for graphs with bounded tree-width which in particular include graphs with bounded complexity; see \cite{Luks82, Bod90, ES17, LPPS17, GNSW20} (we also note that there is a classic linear-time algorithm to test isomorphism for rooted trees \cite{AHU74}). Thanks to Lemma~\ref{lemma-complexity-of-Nr} (below), with high probability all $r$-neighborhoods under consideration have bounded complexity and as a result isomorphism can be tested via polynomial-time algorithms (we may also simply stop the algorithm and declare failure if on the rare event the algorithm detects that some neighborhood has a complexity exceeding a prescribed bound, so that the algorithm stops in polynomial-time deterministically). Altogether, our recovery procedure has a polynomial running time. It is an interesting question  to design an algorithm that achieves the ``optimal'' running time.
 
 \medskip

The much more challenging task is to prove that the preceding procedure succeeds to recover the original graph with high probability. To this end, we need the following \emph{admissibility} condition for the  Erd\H{o}s-R\'enyi graph.
  
\begin{defn}\label{def-admissible}
We say $\mathcal G$ is $(r, \rho)$-admissible if
$$\mathcal D_{\mathrm b} = \cup_x \mathfrak D'_{x, \mathrm b}\,.$$
\end{defn}

\begin{lemma}\label{lem-admissibility-implies-success}
If $\mathcal G$ is $(r,\rho)$-admissible, then $\mathcal G'$ is isomorphic to $\mathcal G$.
\end{lemma}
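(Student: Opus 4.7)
The plan is to build an explicit bijection $\phi : V \to V'$ and verify that it is edge-preserving, treating good vertices and bad components separately. The key point is that good vertices are canonically labeled in both $\mathcal G$ and $\mathcal G'$ (in $\mathcal G$ because they have unique $\rho r$-neighborhoods, and in $\mathcal G'$ by construction), while the admissibility hypothesis supplies a multiset bijection between the bad components of $\mathcal G$ and the bad pieces that the recovery procedure pastes into $\mathcal G'$.

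First I would define $\phi$ to be the identity on $V_{\mathrm g}$, which is well-defined since $V_{\mathrm g} \subset V'$ by the construction of $\mathcal G'$. The first step of the recovery procedure adds an edge between two good vertices $u,v$ exactly when this edge is present in $\mathcal G$; this is faithful because any neighbor of a good vertex $u$ lies in $\mathsf N_r(u)$, and if that neighbor happens to be good then it can be uniquely identified via its $\rho r$-neighborhood (using $\rho r + 1 \leq r$). Consequently, $\phi$ already preserves all edges with both endpoints in $V_{\mathrm g}$.

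Next I would invoke admissibility, $\mathcal D_{\mathrm b} = \bigcup_{x} \mathfrak D'_{x,\mathrm b}$, as an equality of multisets of $\Psi$-equivalence classes. This gives a bijection between the collection of bad components $\mathsf C_{\mathrm b}$ of $\mathcal G$ and the collection of pasted copies $\mathsf D_{x,\mathrm b} \in \mathfrak D'_{x,\mathrm b}$ used in $\mathcal G'$, such that matched pairs have $\Psi(\mathsf D(\mathsf C_{\mathrm b})) = \Psi(\mathsf D_{x,\mathrm b})$. For each matched pair I select a $\Psi$-isomorphism (one that fixes the labels of the external boundary good vertices) and use it to define $\phi$ on the bad vertices of $\mathsf C_{\mathrm b}$. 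Because bad vertices in distinct added copies are disjoint by construction, this extension is consistent and yields a bijection $\phi : V \to V'$.

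Finally I would check that $\phi$ preserves the remaining edges. Every edge of $\mathcal G$ with at least one bad endpoint lies either inside a single bad component $\mathsf C_{\mathrm b}$ or between $\mathsf C_{\mathrm b}$ and some vertex of $\partial_{\mathrm e}\mathsf C_{\mathrm b} \subset V_{\mathrm g}$; both kinds are preserved by the chosen $\Psi$-isomorphism onto the matched $\mathsf D_{x,\mathrm b}$. Conversely, every edge in $\mathcal G'$ incident to a bad vertex sits entirely inside some added copy $\mathsf D_{x,\mathrm b}$, so under the inverse of the bijection it comes from an edge of the corresponding $\mathsf D(\mathsf C_{\mathrm b})$ in $\mathcal G$. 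The hardest part of this argument is really just bookkeeping around $\Psi$: verifying that the multiset equality in the definition of admissibility transfers cleanly to a graph isomorphism that simultaneously fixes the good-vertex boundary labels and correctly accounts for repeated $\Psi$-classes (so that vertex cardinalities match and no component is double-counted). Once that is unwound, the lemma follows directly from the definitions and the construction of $\mathcal G'$.
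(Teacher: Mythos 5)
Your proposal is correct and follows essentially the same route as the paper's proof: define the map as the identity on $V_{\mathrm g}$, use admissibility to match each bad component's $\mathsf D(\mathsf C_{\mathrm b})$ with an added copy via a good-vertex-preserving isomorphism, and verify edge preservation in both directions using the fact that bad vertices have no neighbors outside their $\mathsf D$-graph. The only difference is that you spell out the multiset bookkeeping and the good-good edge recovery a bit more explicitly than the paper does, which is fine.
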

\begin{remark}
Note that Lemma~\ref{lem-admissibility-implies-success} holds for all $r \geq 1$ and $\rho < 1$. The assumption \eqref{eq-rho-r-identifiable} made at the beginning of this section is for the purpose of verifying admissibility.
\end{remark}
\begin{proof}[Proof of Lemma~\ref{lem-admissibility-implies-success}]
In order to prove the lemma, we will define a vertex bijection $\varphi$ and we will prove that $\varphi$ is an isomorphism between $\mathcal G$ and $\mathcal G'$.
From our construction we see that $V_{\mathrm g} \subset V'$ and we define $\varphi$ to be the identical map on $V_{\mathrm g}$. It remains to define $\varphi$ on $V_{\mathrm b}$. Let $\mathcal D'$ be the isomorphic copies of $\cup_x \mathfrak D'_{x, \mathrm b}$ in $\mathcal G'$. By admissibility, there exists a bijection  $\Gamma: \mathcal D_{\mathrm b} \mapsto \mathcal D'$ such that $\mathsf D_{\mathrm b}$ is isomorphic to $\Gamma(\mathsf D_{\mathrm b})$
 for each $\mathsf D_{\mathrm b}  \in  \mathcal D_b$. In addition, we let $\varphi_{\mathsf D_{\mathrm b}}$ be an isomorphism; we remind the reader that $\varphi_{\mathsf D_{\mathrm b}}$ preserves good vertices. Since bad vertices in $\mathsf D_{\mathrm b}$'s are disjoint, we can then define
 $$\varphi(v) = \varphi_{\mathsf D_{\mathrm b}}(v) \mbox{ for } v \in V_{\mathrm b} \cap \mathsf D_{\mathrm b}\,.$$
Clearly $\varphi$ is a bijection.
 It remains to prove that $\varphi$ preserves edges. It is obvious that $\varphi$ preserves edges within $V_{\mathrm g}$, and thus it remains to check edges that are incident to at least one bad vertex. For each $\mathsf D_{\mathrm b} \in \mathcal{D}_{\mathrm b}$, we can write $\mathsf D_{\mathrm b} = \mathsf D(\mathsf C_{\mathrm b})$. In addition, for $\mathsf D'_{\mathsf b} = \Gamma(\mathsf D_{\mathrm b})$ we let $\mathsf C'_{\mathrm b}$ be the collection of vertices in  $\mathsf D'_{\mathsf b}$ but not in $V_{\mathrm g}$. From our construction, it is clear that $\mathsf C_{\mathrm b}$ is not neighboring any vertex outside the vertex set of $\mathsf D_{\mathrm b}$, and also $\mathsf C'_{\mathrm b}$ is not neighboring any vertex outside the vertex set of $\mathsf D'_{\mathrm b}$. For edges within $\mathsf D_{\mathrm b}$, $\varphi$ preserves them since the restriction of  $\varphi$ on $\mathsf D_{\mathrm b}$ is the same as $\varphi_{\mathsf D_{\mathrm b}}$ (which is an isomorphism between $\mathsf D_{\mathrm b}$ and $\mathsf D'_{\mathrm b}$). This completes the proof.
\end{proof}

In light of Lemma~\ref{lem-admissibility-implies-success}, the main remaining task is to prove that $\mathcal G$ is admissible with high probability. To this end, we present a sufficient condition for admissibility in this section and we verify this condition in Section~\ref{sec:ER-property}.

In light of our preprocessing, all remaining components are non-degenerate, i.e., it does not contain a degenerate vertex. In addition, the admissibility for each connected component implies admissibility for the whole graph. As a result, in this section we consider a connected non-degenerate graph $G = (V(G), E(G))$, and we define $V_{\mathrm g}(G)$ and $V_{\mathrm b}(G)$ as $V_{\mathrm g}$ and $V_{\mathrm b}$ above but with respect to graph $G$. 
For notation convenience, in many cases we drop the dependence on $G$ when there is no ambiguity. 

We next define \emph{cycle} and \emph{simple cycle}: we say a sequence of (not necessarily distinct) vertices is a cycle if each of the neighboring pairs (including the pair for the starting and ending vertices) is connected by an edge in $G$ and in addition all these edges are distinct; we say a cycle is a simple cycle if each vertex has degree $2$ in this cycle. We say an edge $e \in E(G)$ is a \emph{bridge} if it is not contained in any cycle in $G$.  
Let $E_{\mathrm{br}}(G)$ be the collection of bridges in $G$. Note that
if $\mathsf{T}$ is a connected component for the subgraph induced by $E_{\mathrm{br}}(G)$, then $\mathsf{T}$ must be a  tree; in this case we say $\mathsf T$ is a \emph{bridging-tree}.
Also, we denote by $\mathsf T_v$ the bridging-tree containing $v$ (this is well-defined since different bridging-trees are vertex disjoint).
In addition, we let $\partial_{\mathrm{i}} \mathsf T$ be the \emph{internal boundary} of $\mathsf{T}$,
consisting of vertices in $\mathsf T$ which are neighboring to some vertex outside of $\mathsf T$.  Furthermore, if $\mathsf{B}$ is a  connected component for the subgraph induced by $E(G)\setminus E_{\mathrm{br}}(G)$,
we say $\mathsf{B}$ is a \emph{block} of $G$. We note that $u$, $v$ are in the same block if and only if there is a cycle (not necessarily simple) containing $u$ and $v$.
Partly for the purpose of facilitating our understanding, we make some simple observations: (1) different blocks are vertex disjoint (so are different bridging-trees as we pointed out earlier); (2) a bridging-tree and a block share no common edge; (3) For a bridging-tree $\mathsf T$, we have that  $u \in \partial_{\mathrm{i}} \mathsf{T}$   if and only if  there is a (unique) block $\mathsf{B}$ such that $u = V(\mathsf{T}) \cap V(\mathsf{B})$ (this is implied by Lemma~\ref{blocktree} (i) below).

 We are now ready to define strongly-admissibility which guarantees admissibility (as shown in Proposition~\ref{prop-G-r-rho-L}).

\begin{defn}\label{def-G-r-rho-L}
Let $L, r \geq 1$ and $\rho \in (0,1)$.
We say $G$ is $(r, \rho, L)$-strongly-admissible if the following properties hold.
 \begin{enumerate}[(1)]
\item For every $v \in V_{\mathrm b}(G)$, if
$\mathsf{N}_{ \rho r}(v)$  has two $\rho r$-arms or  there is a cycle in $\mathsf{N}_{\rho r}(v)$ containing $v$,  then  there exists a unique simple cycle $\mathsf{O}$ in $\mathsf{N}_{\rho r}(v)$ containing $v$ and moreover $\mathrm{Length}(\mathsf{O}) \leq L$. In addition,
the connected component of $v$ in the subgraph induced by $E(G)\backslash E(\mathsf{O})$ is a bridging-tree in $G$ (namely $\mathsf{T}_{v}$) satisfying $H( \mathsf{T}_{v} ) \leq L$ and  $\partial_{\mathrm{i}} \mathsf{T}_{v}=\{v\}$.

 \item There are at most $\log r$ vertices in $G$ which are contained in cycles of lengths less than $ L$.
   \end{enumerate} 
  \end{defn}

  \begin{proposition}\label{prop-G-r-rho-L}
There exists $r_0 = r_0(\rho, L) \geq 1$ such that if $G$ is $(r, \rho, L)$-strongly-admissible for some $r\geq r_0$, then $G$ is $(r,\rho)$-admissible. 
 \end{proposition}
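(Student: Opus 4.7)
The plan is to verify admissibility, $\mathcal D_{\mathrm b} = \bigcup_x \mathfrak D'_{x, \mathrm b}$, by proving both inclusions. The reverse inclusion $\bigcup_x \mathfrak D'_{x, \mathrm b} \subseteq \mathcal D_{\mathrm b}$ is essentially automatic: if $\mathsf D(\mathsf C_{x, \mathrm b}) \in \mathfrak D'_{x, \mathrm b}$, then $\mathsf C_{x, \mathrm b} \subseteq U_x \subseteq \mathsf N_{r-1}(x)$ forces every $G$-neighbor of $\mathsf C_{x, \mathrm b}$ into $\mathsf N_r(x)$ and thus into $\mathsf C_{x, \mathrm b} \cup \partial_{\mathrm{e}} \mathsf C_{x, \mathrm b}$; since $\partial_{\mathrm{e}} \mathsf C_{x, \mathrm b} \subseteq U_{x, \mathrm g} \subseteq V_{\mathrm g}$, every bad $G$-neighbor of $\mathsf C_{x, \mathrm b}$ already lies in $\mathsf C_{x, \mathrm b}$, so $\mathsf C_{x, \mathrm b}$ is a maximal connected subset of $V_{\mathrm b}$, i.e., a full bad component of $G$.

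The forward inclusion $\mathcal D_{\mathrm b} \subseteq \bigcup_x \mathfrak D'_{x, \mathrm b}$ is the main work. For each bad component $\mathsf C_{\mathrm b}$ I aim to produce $x \in \partial_{\mathrm{e}} \mathsf C_{\mathrm b}$ (necessarily good) with $\mathsf C_{\mathrm b} \cup \partial_{\mathrm{e}} \mathsf C_{\mathrm b} \subseteq U_x$; taking $x$ minimal in the prescribed ordering then places $\mathsf D(\mathsf C_{\mathrm b})$ in $\mathfrak D'_{x, \mathrm b}$ by minimality. Equivalently, the $\rho r$-thickening of $\mathsf C_{\mathrm b} \cup \partial_{\mathrm{e}} \mathsf C_{\mathrm b}$ must lie inside $\mathsf N_{r-1}(x)$. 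I split into two structural cases. In the first, some $v \in \mathsf C_{\mathrm b}$ has two $\rho r$-arms or lies on a cycle through $v$ in $\mathsf N_{\rho r}(v)$; condition (1) then places $v$ on a unique simple cycle $\mathsf O_v$ of length $\leq L$ with pendant bridging tree $\mathsf T_v$ of height $\leq L$ and $\partial_{\mathrm{i}} \mathsf T_v = \{v\}$. Uniqueness forces every bad vertex on $\mathsf O_v$ to share the same cycle, and the pendant property traps any simple path entering a pendant tree $\mathsf T_{v'}$ (at a bad cycle vertex $v'$) inside it; thus $\mathsf C_{\mathrm b}$ sits inside $\mathsf O_v$ together with the bridging trees hanging off its bad cycle vertices, giving $\mathrm{diam}(\mathsf C_{\mathrm b}) = O(L)$. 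In the second case, every vertex of $\mathsf C_{\mathrm b}$ is case-A (no two $\rho r$-arms and no cycle through it in $\mathsf N_{\rho r}$); then the midpoint of any geodesic in $\mathsf C_{\mathrm b}$ of length $\geq 2\rho r$ would have two disjoint $\rho r$-arms via the geodesic itself, a contradiction, so all such geodesics have length $< 2\rho r$.

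Choosing $r_0(\rho, L)$ so that $\rho r + O(L) + 2 \leq r - 1$ disposes of the first case directly. The main obstacle I anticipate is the second case: the naive triangle-inequality bound $d(x, w) \leq d(x, v) + d(v, w) \leq \mathrm{diam}(\mathsf C_{\mathrm b}) + 1 + \rho r < 3\rho r + 1$ on the thickening radius is too weak once $\rho > 1/3$. To close this gap, the argument must exploit the case-A constraint more carefully: a case-A bad vertex cannot simultaneously support long arms in two disjoint directions, so $\mathsf C_{\mathrm b}$ is forced into a pendant-like configuration relative to its boundary, extending in essentially one direction from some $x \in \partial_{\mathrm{e}} \mathsf C_{\mathrm b}$. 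This one-sidedness keeps the $\rho r$-thickening within $\mathsf N_{r-1}(x)$ regardless of $\rho < 1$, and condition (2) is then used to preclude hidden small-cycle clusters that could otherwise disrupt the pendant structure.
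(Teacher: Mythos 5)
Your overall reduction is the right one (find a good $x$ adjacent to the bad component whose $(r-1)$-neighborhood swallows the $\rho r$-thickening of $\mathsf D(\mathsf C_{\mathrm b})$, then take the minimal such $x$; the reverse inclusion is indeed routine), and your first case — some bad vertex on a short cycle or with two $\rho r$-arms, giving $\mathrm{diam}(\mathsf C_{\mathrm b})=O(L)$ via condition (1) — matches the paper's Case 1/2 and is fine. But the second case is precisely where the proposition's content lies (recall that \eqref{eq-rho-r-identifiable} forces $\rho$ close to $1$, so the naive bound $3\rho r$ you compute is hopeless), and there you have not given an argument: ``the case-A constraint forces a pendant-like configuration, and this one-sidedness keeps the $\rho r$-thickening within $\mathsf N_{r-1}(x)$'' is a restatement of what must be proved, not a mechanism. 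In fact the paper's mechanism is not one-sidedness at all: in its Case 3 it deliberately chooses $x$ to be the good vertex on the tree path $[v,w]$ \emph{closest to} a vertex $w$ having two $\rho r$-arms (whose existence, when $\mathsf T_v$ has no such vertex, itself needs Lemma~\ref{ArmCharact}(i): the longest path in a non-degenerate $G$ has length $\geq 2r-2$, giving $\geq \log r$ two-armed vertices, one of which is good by condition (2)); it then shows that if some $\mathsf N_{\rho r}(u)$, $u\in\mathsf D_v$, escaped $\mathsf N_{r-1}(x)$, then $x$ would have two $\rho r$-arms (using the block/bridging-tree disjointness $V_{v;x}\cap V_{w;x}=\emptyset$ of Lemma~\ref{blocktree}(ii)); and the containment for a two-armed good $x\in\mathsf D_v$ is the separate Lemma~\ref{ArmCharact}(ii), whose proof rests on the claim that \emph{any} path from $x$ to a vertex outside $\mathsf N_{r-1}(x)$ must contain a good vertex within its last $\log r$ steps — established by a case analysis on cycle lengths that uses condition (2) in an essential, quantitative way. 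Applied to the path that runs from $x$ through the (all-bad) component to $u$ and then out along the escaping geodesic, this yields a good vertex among the $x_i\in V_{\mathrm b}$, a contradiction.

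So the gap is concrete: you are missing (a) how to select $x$ (closest good vertex to a two-armed vertex, plus the existence argument for such a $w$ when the bridging tree has none), and (b) the key lemma that a good, two-armed $x\in\mathsf D_v$ absorbs all $\rho r$-neighborhoods of $\mathsf D_v$ into $\mathsf N_{r-1}(x)$, together with its ``good vertex near the end of any escaping path'' claim. Your intended use of condition (2) (``preclude hidden small-cycle clusters'') is also not how it enters; it is needed to guarantee that among any $\log r$ consecutive vertices that are two-armed or lie on a long cycle, at least one is good. Without supplying (a) and (b) — or a genuinely different substitute — the second case, and hence the proposition, is not proved.
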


The rest of this section is devoted to the proof of Proposition~\ref{prop-G-r-rho-L}. To this end, we need the following two lemmas whose proofs are postponed until the end of this section. Recall that we have assumed $G$ is connected and non-degenerated.

\begin{lemma}\label{blocktree}
The following hold for a bridging-tree $\mathsf T$ of $G$:
 \begin{enumerate}[(i)]
\item For each $u \in \partial_{\mathrm{i}} \mathsf{T}$, let $G_{u}$ be the connected component of $u$ in the subgraph of $G$ induced by  edges in $E(G)\backslash E(\mathsf{T})$. Then
\begin{equation}\label{blocktree2}
 V(G)=  \bigcup_{u \in \partial_{\mathrm{i}} \mathsf{T}} V(G_{u}) \cup V(\mathsf{T}) \ , \  E(G)=  \bigcup_{u \in \partial_{\mathrm{i}} \mathsf{T}} E(G_{u}) \cup E(\mathsf{T})
\end{equation}
and $V(G_{u}) \cap V(\mathsf{T})=\{u\}$, $V(G_{u_{1}}) \cap V(G_{u_{2}}) = \emptyset$   for $u_{1}, u_{2} \in \partial_{\mathrm{i}} \mathsf{T}$ and $u_{1} \neq u_{2}$.
\item  For $v, w \in \mathsf{T}$, there exists a unique path (denoted by $[v,w]$) from $v$ to $w$ in $G$. For $y \in [v,w]$, let $V_{v;y}$ be  the collection of vertices  $u$ such that there is a path from $u$ to $v$ without visiting $y$. Then $ V_{v;y} \cap V_{w;y} = \emptyset$.
 \end{enumerate}
\end{lemma}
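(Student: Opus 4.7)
The plan is to prove both parts by exploiting the defining property of a bridge, namely that it lies in no cycle of $G$. First I will establish the key auxiliary fact that \emph{any edge $e = (a,b) \in E(G)$ with both $a, b \in V(\mathsf T)$ must lie in $E(\mathsf T)$}. When $e$ is itself a bridge, this is immediate since $a$ and $b$ already lie in the same bridge-component, forcing $e$ to be in that component. When $e$ is not a bridge, let $P$ be the unique path inside the tree $\mathsf T$ from $a$ to $b$ and let $e' = (a,c)$ be its first edge; concatenating the sub-path of $P$ from $c$ to $b$ with $e$ produces a $c$-to-$a$ walk in $G \setminus \{e'\}$ with distinct edges, hence a cycle through the bridge $e'$, a contradiction. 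The same mechanism shows that for distinct $u_1, u_2 \in V(\mathsf T)$ there is no path from $u_1$ to $u_2$ in $G \setminus E(\mathsf T)$, since such a path combined with the $\mathsf T$-path from $u_2$ back to $u_1$ (minus its first edge) would contain a cycle through a bridge of $\mathsf T$.

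With these two cycle-through-a-bridge arguments in hand, part (i) follows routinely. The disjointness $V(G_{u_1}) \cap V(G_{u_2}) = \emptyset$ for distinct $u_1, u_2 \in \partial_{\mathrm i}\mathsf T$ is exactly the second fact above, and $V(G_u) \cap V(\mathsf T) = \{u\}$ is a special case of the same statement. For the vertex identity $V(G) = \bigcup_{u \in \partial_{\mathrm i}\mathsf T} V(G_u) \cup V(\mathsf T)$, I will use connectivity of $G$: for $x \in V(G)\setminus V(\mathsf T)$, pick a shortest $G$-path from $x$ to $V(\mathsf T)$. Its internal vertices lie outside $V(\mathsf T)$ by shortness, and edges of $E(\mathsf T)$ have both endpoints in $V(\mathsf T)$, so the entire path lies in $G \setminus E(\mathsf T)$; its terminal vertex $u \in V(\mathsf T)$ is incident to a non-$\mathsf T$ neighbor and hence belongs to $\partial_{\mathrm i}\mathsf T$, whence $x \in V(G_u)$. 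The edge identity is then immediate because every edge in $E(G) \setminus E(\mathsf T)$ sits in the unique $G_u$ containing either of its endpoints.

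For part (ii), let $P_{\mathsf T}$ be the unique path from $v$ to $w$ inside the tree $\mathsf T$. For uniqueness of $[v,w]$ in $G$, suppose $P$ is any $v$-to-$w$ path in $G$ with $P \neq P_{\mathsf T}$; between the first vertex where $P$ and $P_{\mathsf T}$ diverge and the first vertex where they re-meet, the corresponding sub-paths are internally disjoint and together form a cycle containing at least one edge of $P_{\mathsf T}$, which is a bridge of $\mathsf T$, a contradiction. Hence $[v,w] = P_{\mathsf T}$. For the disjointness, suppose $u \in V_{v;y} \cap V_{w;y}$: concatenating a $u$-to-$v$ path avoiding $y$ with a $u$-to-$w$ path avoiding $y$ yields a $v$-to-$w$ walk avoiding $y$, from which we can extract a $v$-to-$w$ path avoiding $y$, contradicting the uniqueness of $[v,w]$ (which by hypothesis passes through $y$).

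The only subtlety is to ensure that each closed walk exhibited in the cycle arguments is a genuine cycle with distinct edges, which is automatic in every case because the ``$\mathsf T$-part'' and the ``non-$\mathsf T$-part'' of each walk are edge-disjoint by construction.
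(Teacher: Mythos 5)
Your proposal is correct and follows essentially the same route as the paper: every contradiction is obtained by exhibiting a cycle (a closed walk with distinct edges) through an edge of $\mathsf{T}$, which is impossible since edges of a bridging-tree are bridges, and the decomposition in (i) uses connectivity of $G$ exactly as in the paper. Your preliminary facts (no non-tree edge joins two vertices of $V(\mathsf{T})$, and no path between distinct vertices of $V(\mathsf{T})$ avoids $E(\mathsf{T})$) are just a mild repackaging of the paper's first step showing $V(G_u)\cap V(\mathsf{T})=\{u\}$, so no genuinely different idea is involved.
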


For $v \in V_{\mathrm{b}}$, let  $\mathsf{D}_{v}=\mathsf{D}(\mathsf{C}_{\mathrm b}(v))$ where $\mathsf{C}_{\mathrm b}(v)$ is the bad component containing $v$.  

 \begin{lemma}\label{ArmCharact}
There exists $r_0 = r_0(\rho, L) \geq 1$ such that   for all $r \geq r_0$ the following hold provided that 
$G$ is an
$(r,\rho,L)$-strongly-admissible graph:
\begin{enumerate}[(i)]
\item  There exists $x \in V_{\mathrm g}  $ such that $\mathsf{N}_{\rho r}(x)$ has two $\rho r$-arms.
  \item If $x \in V_{\mathrm g} \cap \mathsf{D}_v$  and $\mathsf{N}_{\rho r}(x)$ has two $\rho r$-arms,  then   $\mathsf{N}_{\rho r}(u) \subset \mathsf{N}_{r-1}(x)$ for all $u \in \mathsf{D}_v$. 
\end{enumerate} 
 \end{lemma}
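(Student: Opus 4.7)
For part~(i), the key observation is: if a vertex $v$ has two $\rho r$-arms and $\mathsf{N}_{\rho r}(v)$ contains no cycle through $v$, then $v$ must be good. Indeed, were such a $v$ bad, property~(1) of $(r,\rho,L)$-strong-admissibility would force the existence of a simple cycle through $v$ contained in $\mathsf{N}_{\rho r}(v)$, contradicting the hypothesis. The task therefore reduces to exhibiting a vertex $v$ with two $\rho r$-arms which is not on any cycle of length at most $2\rho r$ (any such cycle through $v$ automatically lies inside $\mathsf{N}_{\rho r}(v)$).

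To produce such a $v$, I would combine the non-degeneracy assumption with the block/bridging-tree decomposition from Lemma~\ref{blocktree}. Starting from an arbitrary vertex $v_0$, non-degeneracy yields a path $P_0=(v_0,v_1,\ldots,v_r)$ of length $r$. Applying non-degeneracy at the endpoint $v_r$ produces a path $P_1$ of length $r$ out of $v_r$, which is either internally disjoint from $P_0$ (in which case the concatenation is a path of length $2r$ whose interior vertices have two $r$-arms) or intersects $P_0$ creating a cycle, whose length is either $<L$ (and then property~(2) limits the total vertex count in such configurations to at most $\log r$) or $\geq L$ (in which case interior vertices of the long cycle have two $\rho r$-arms along it). In either case we obtain many vertices with two $\rho r$-arms. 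Property~(2) lets us further restrict to vertices not on any cycle of length $<L$, and a short sliding argument along the path or cycle, combined with the uniqueness clause in property~(1), lets us also avoid cycles of length in $[L,2\rho r]$, concluding part~(i).

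For part~(ii), the desired containment $\mathsf{N}_{\rho r}(u)\subset\mathsf{N}_{r-1}(x)$ is equivalent by the triangle inequality to $d_G(x,u)\leq(1-\rho)r-1$ for every $u\in\mathsf{D}_v$; since $\rho<1$ is fixed, it suffices to prove $\mathrm{diam}_G(\mathsf{D}_v)=O(L)$. Since $x\in V_{\mathrm g}\cap\mathsf{D}_v$ and $x\notin V_{\mathrm b}$, we have $x\in\partial_{\mathrm e}\mathsf{C}_{\mathrm b}(v)$. I would analyze bad vertices $y\in\mathsf{C}_{\mathrm b}(v)$ in two classes: \textbf{(a)} those triggering property~(1), i.e., having two $\rho r$-arms or containing a cycle in $\mathsf{N}_{\rho r}(y)$, for which property~(1) pins the local structure around $y$ as a simple cycle of length $\leq L$ plus a bridging-tree of height $\leq L$ with internal-boundary $\{y\}$; and \textbf{(b)} the remaining bad vertices, whose $\rho r$-neighborhoods are tree-like with at most one branch of depth $\geq \rho r$. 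The fact that $x$ is good with two $\rho r$-arms and is adjacent to $\mathsf{C}_{\mathrm b}(v)$ then forces the bad component not to extend too far from $x$: iterating along the component using Lemma~\ref{blocktree} and combining the local structure from (a) with the single-branch restriction from (b), one obtains that $\mathsf{D}_v$ is contained in a ball of radius $O(L)$ around $x$, which is at most $(1-\rho)r-1$ once $r\geq r_0(\rho,L)$.

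The main obstacle is part~(ii)'s diameter bound for bad components, which requires a unified treatment of the two classes of bad vertices above. Bad vertices of type~(b), in particular, lack the tight local structure enforced by property~(1); controlling how long a chain of such vertices can be (before a type~(a) vertex or a good vertex is encountered) is the delicate step, and this is where the interplay between non-degeneracy, the two properties in Definition~\ref{def-G-r-rho-L}, and the block/bridging-tree decomposition of Lemma~\ref{blocktree} is essential. A related subtlety in part~(i) is converting non-degeneracy (a statement about $r$-neighborhoods) into a vertex with two $\rho r$-arms avoiding all short-to-medium cycles, which is nontrivial when $\rho$ is close to $1$ since a bare midpoint argument no longer yields arms of length $\rho r$.
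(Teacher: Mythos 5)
Part (i) of your proposal is essentially sound and close in spirit to the paper's argument, though more roundabout: the paper simply takes a longest path, notes that non-degeneracy forces its length to be at least $2r-2$, so more than $\log r$ of its interior vertices have two $\rho r$-arms; any \emph{bad} such vertex lies on a simple cycle of length at most $L$ by property (1) of Definition~\ref{def-G-r-rho-L}, and property (2) caps the number of such vertices by $\log r$. In particular your ``sliding argument'' to avoid cycles of length in $[L,2\rho r]$ is unnecessary: property (1) already places a bad two-arm vertex on a cycle of length at most $L$, so only the count in property (2) is needed.

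Part (ii) has a genuine gap, and it is exactly the step you flag as ``delicate'': the reduction to $\mathrm{diam}_G(\mathsf{D}_v)=O(L)$ is not only unproved in your sketch, it is false under the hypotheses of the lemma, and no argument along these lines can work in the relevant regime. Property (1) constrains a bad vertex only if it has two $\rho r$-arms or lies on a cycle inside its $\rho r$-neighborhood; a pendant path of length just below $\rho r$ hanging off a good vertex $x$, duplicated at a second attachment point whose surroundings agree to depth $\rho r-1$ but not to depth $\rho r$, makes every pendant vertex bad while triggering neither clause (each has a single long arm, necessarily through $x$, and sees no cycle), and property (2) is vacuous when there are no short cycles at all; such a graph can be arranged to be $(r,\rho,L)$-strongly-admissible, yet the bad component has diameter of order $\rho r\gg L$, with $\mathrm{dist}(x,u)$ up to about $\rho r$. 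Note also that in the intended application $\rho=\frac{1+\epsilon_0/2}{1+\epsilon_0}>\frac12$, so $\rho r$ exceeds $(1-\rho)r-1$: your sufficient condition $\mathrm{dist}(x,u)\leq(1-\rho)r-1$ (which, incidentally, is only an implication, not an equivalence) can fail even though the conclusion $\mathsf{N}_{\rho r}(u)\subset\mathsf{N}_{r-1}(x)$ holds, because the $\rho r$-ball of a deep pendant vertex folds back through the vicinity of $x$. The paper therefore does not bound distances at all; instead it proves a claim of a different nature: on \emph{any} path from $x$ to a vertex outside $\mathsf{N}_{r-1}(x)$, one of the last $\log r$ vertices must be good. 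This is shown using the two $\rho r$-arms of $x$, a case analysis on the simple cycle created when the path meets an arm (length $>2\rho r$, in $(L,2\rho r)$, or $\leq L$), together with properties (1)--(2) and Lemma~\ref{blocktree}(ii). Since $x\in\mathsf{D}_v$ is joined to each $u\in\mathsf{D}_v$ by a path whose interior vertices are bad, no vertex of $\mathsf{N}_{\rho r}(u)$ can escape $\mathsf{N}_{r-1}(x)$ without producing a forbidden good vertex on such a path. That mechanism---good vertices must appear near the end of any long escape route from $x$---is the idea missing from your proposal, and without it (or a substitute) the proof of (ii) does not go through.
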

 
We introduce yet another notation. Given two intersecting paths $P^{1}=(u_0,u_{1},\ldots,u_{m})$
and $P^{2}=(v_0,v_{1},\ldots,v_{\ell})$, let $k= \min\{j:u_{j} \in P^{2} \}$ and let $k'$ be such that $u_{k}=v_{k'}$. We define $g(P^{1}, P^{2}) $ to be the  path $ (u_0, u_{1}, \ldots,u_{k}=v_{k'}, v_{k'+1},\ldots, v_{\ell})$. We are now ready to prove Proposition~\ref{prop-G-r-rho-L}.

\begin{proof}[Proof of Proposition~\ref{prop-G-r-rho-L}]
It suffices to show that for all $v \in V_{\mathrm b}(G)$,
there exists $x \in  V_{\mathrm g} \cap \mathsf{D}_v$ such that
\begin{equation}\label{eq-x-characterizes-D(v)}
  \mathsf{N}_{\rho r} (u) \subset \mathsf{N}_{r-1}(x) \quad \text{ for all }  u \in \mathsf{D}_v\,.
\end{equation}
  The proof of \eqref{eq-x-characterizes-D(v)} proceeds as analysis by cases. To this end, we claim that for a bridging-tree $\mathsf{T}$, either $\partial_{\mathrm{i}} \mathsf{T} = \{v\}$ for some  $v \in V_{\mathrm b}$ or $\partial_{\mathrm{i}} \mathsf{T}  \subset V_{\mathrm g}$. To see this, note that if there is $v \in V_{\mathrm b}\cap \partial_{\mathrm{i}}\mathsf{T}$, then there is  a simple cycle containing $v$. If the length of this cycle is $> 2\rho r$, then $v$ has two $\rho r$-arms; if not, then this cycle is contained in $\mathsf{N}_{\rho r}(v)$. Thus, by Definition \ref{def-G-r-rho-L}  we have $\mathsf{T}=\mathsf{T}_{v}$ and $\partial_{\mathrm{i}} \mathsf{T}=\{v\}$, verifying the claim. In light of this claim, we only need to consider the following three cases (see Figures~\ref{Fig-Case-1-2} and \ref{Fig-d-Case-3} for illustrations).

\begin{figure}[htbp]
  \centering \includegraphics[scale=0.6]{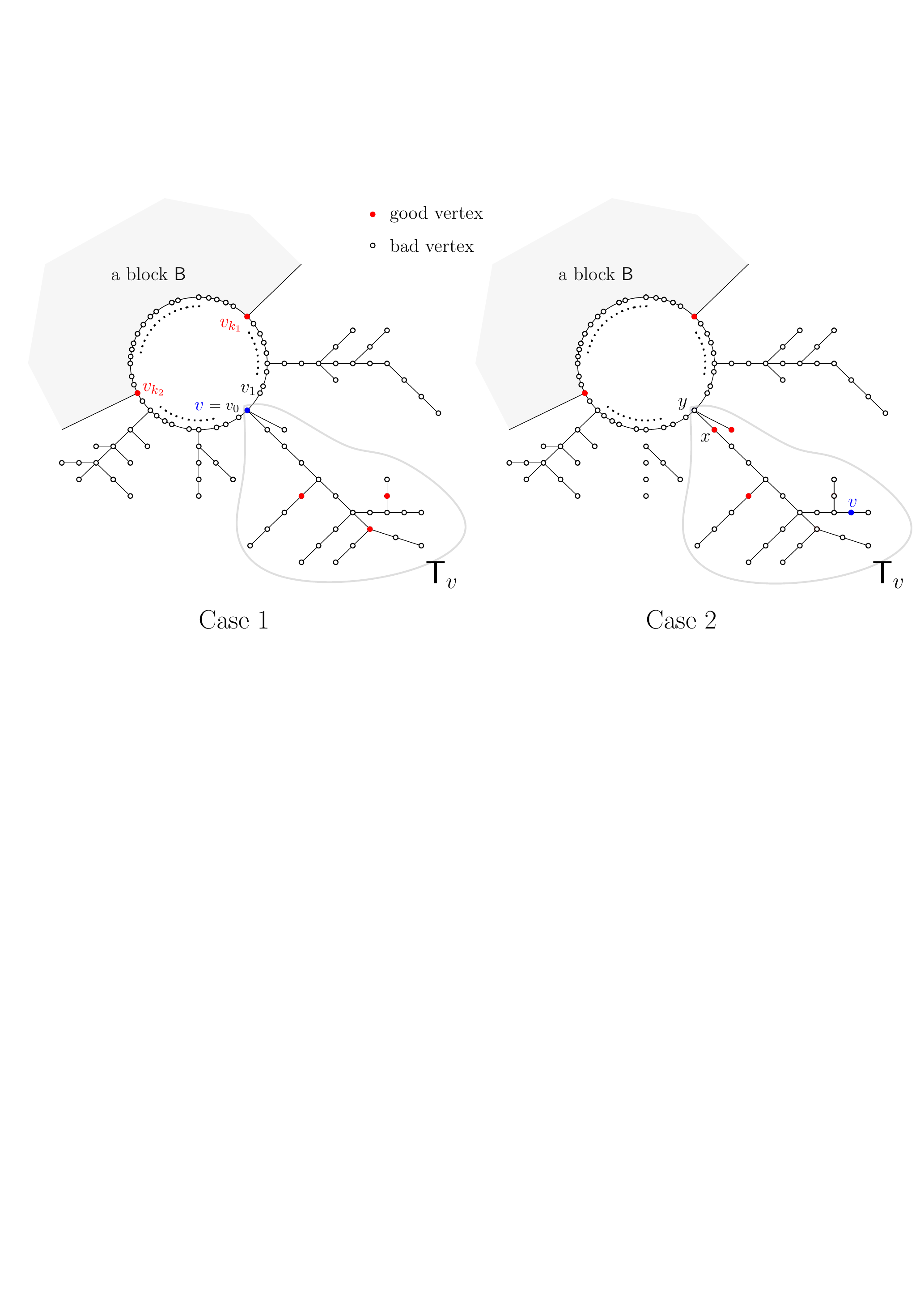}
  \caption{ Case 1 and Case 2 in Proposition \ref{prop-G-r-rho-L}}
\label{Fig-Case-1-2}
\end{figure}

\noindent {\bf Case 1:}  $v$ is contained in a cycle. By Definition~\ref{def-G-r-rho-L},  $v$ is contained in  a simple cycle  $\mathsf{O}=(v_{0}=v,v_{1},\ldots,v_{m} = v)$ with $m\leq L$. We claim that there must exist $u\in \mathsf O \cap V_{\mathrm g}$. Otherwise, we have $H(\mathsf T_w) \leq L$ for all $w\in \mathsf O$ by Definition~\ref{def-G-r-rho-L}. This would imply that the diameter of $G$ is at most $3L$ and thus $G$ is degenerate (assuming that $r$ is sufficiently large), arriving at a contradiction.

Now, let $k_{1}=\inf\{k \geq 1 : v_{k}\in V_{\mathrm g} \}$  and $k_{2}=\sup \{k \geq 1 : v_{k} \in V_{\mathrm g}\}$. Clearly $v_{k_{1}} \in V_{\mathrm{g}} \cap \mathsf{D}_{v}$.
By Definition \ref{def-G-r-rho-L}, if a path starting from $v_{j}$ (with $j<k_{1}$ or $j > k_{2}$) does not visit $v_{k_{1}}$ or $v_{k_{2}}$, then the path is contained in  $\cup_{k<k_{1}, k > k_{2}} V(\mathsf{T}_{v_{k}})$ where each tree satisfies $H(\mathsf{T}_{v_{k}}) \leq L$. Thus, we have $\mathsf{D}_v \subset\cup_{k<k_{1}, k > k_{2}} V(\mathsf{T}_{v_{k}})$. Since $m\leq L$, we see that $\mathsf D_v \subset \mathsf N_{2L}(v_{k_1})$ and  hence \eqref{eq-x-characterizes-D(v)} holds with $x = v_{k_1}$ (assuming that $r$ is sufficiently large).

\noindent {\bf Case 2:} $v$ is not contained in any cycle and $\partial_{\mathrm{i}} \mathsf T_v = \{y\}$ for some $y \in V_{\mathrm b}$. If $\mathsf{D}_y=\mathsf{D}_v$, then this reduces to Case 1. Thus, we may assume in addition that $\mathsf{D}_v \neq \mathsf{D}_y$. This implies that on the unique path from $v$ to $y$, there exists $x \in V_{\mathrm g}(\mathsf{D}_v) \cap V(\mathsf{T}_v)$,
  and hence $\mathsf{D}_v \subset \mathsf{T}_v=\mathsf{T}_{y}$. Since $H(\mathsf{T}_{y}) \leq L$ (as $y \in \partial_{\mathrm{i}} \mathsf T_v$ is contained in a cycle),  we have $\mathsf D_v \subset \mathsf N_{2L}(x)$ and hence \eqref{eq-x-characterizes-D(v)} holds.

\begin{figure}[htbp]
  \centering \includegraphics[scale=0.6]{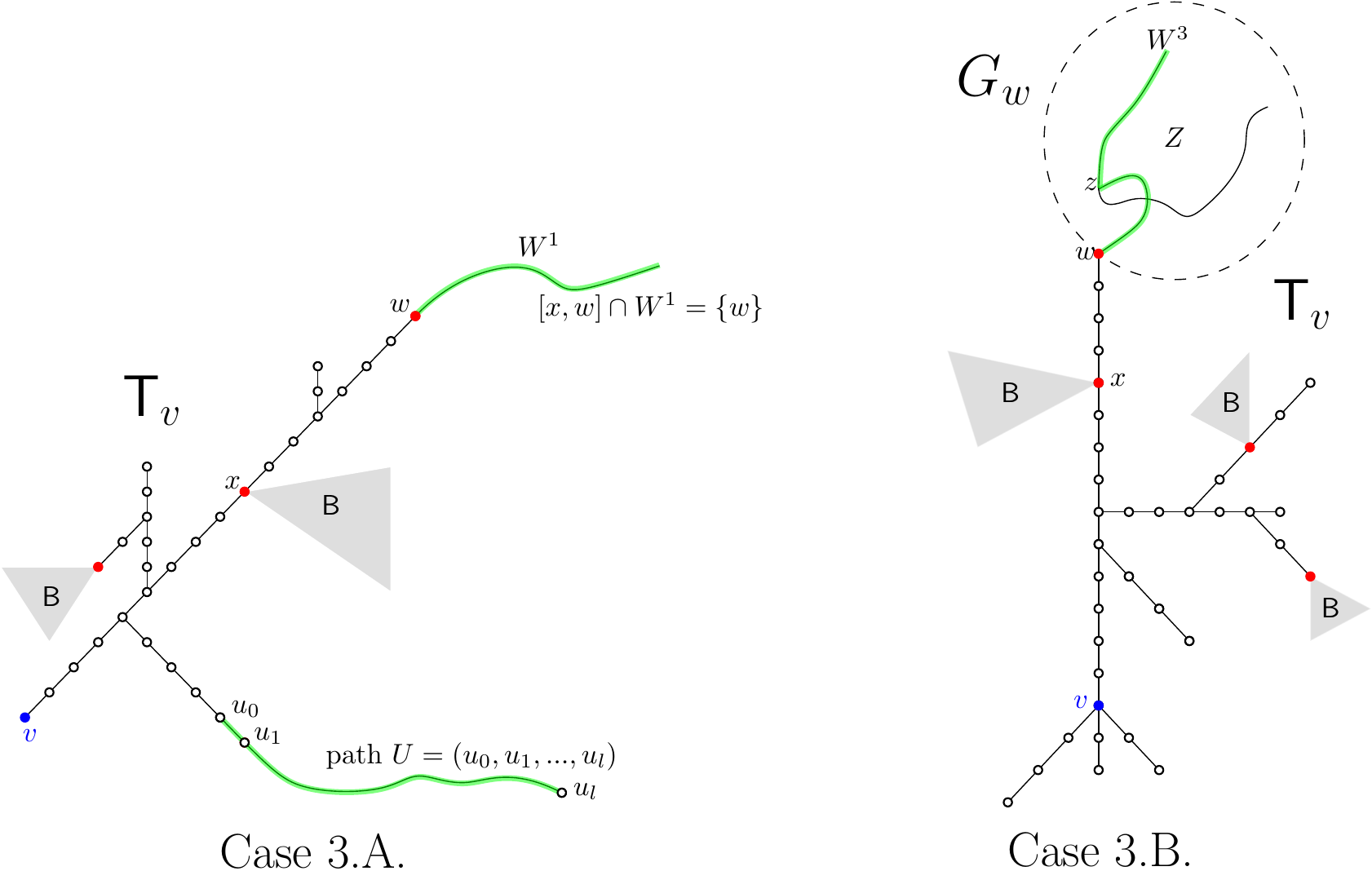}
  \caption{ Case 3  in Proposition \ref{prop-G-r-rho-L} }
\label{Fig-d-Case-3}
\end{figure}

\noindent {\bf Case 3:}  $v$ is not contained in any cycle and $\partial_{\mathrm{i}} \mathsf T_v \subset V_{\mathrm g}$. In this case, we have $\mathsf{D}_v \subset \mathsf{T}_v$, since  by \eqref{blocktree2} for $u \in \partial_{\mathrm{i}} \mathsf{T}_v$ one has that every path from $v$ to some $w \in G_{u}\backslash \{u\}$ visits $u$ (Recall that $G_u$ is defined in Statement (i) of Lemma~\ref{blocktree}). We further divide Case 3 into two subcases.

\noindent \underline{Case 3.A}: there exists  $w \in \mathsf{T}_v$ with two $\rho r$-arms. If $w\in V_{\mathrm b}$, then by Definition~\ref{def-G-r-rho-L} $w$ must be contained in a cycle and thus $w\in \partial_{\mathrm{i}} \mathsf T_v$, contradicting the assumption that $\partial_{\mathrm{i}} \mathsf T_v \subset V_{\mathrm g}$. As a result, $w$ must be good.

  Let $[v,w]$ be the unique  path  from $v$ to $w$ (using Lemma \ref{blocktree} (ii)), and let $x$ be the good vertex on $[v,w]$ which is closest to $w$. By the choice of ``closest'', we have $x \in \mathsf{D}_v$.  Let  $W^1,W^2$ be the two $\rho r$-arms of $w$.
By Lemma~\ref{blocktree} (ii), we have either $W^{1} \cap [w,x] =\{w\}$ or $W^{2} \cap [w,x] =\{w\}$ since otherwise $W^{1}$ and $W^{2}$ will not be edge-disjoint. Without loss of generality we assume that $W^{1} \cap [w,x] =\{w\}$.

If \eqref{eq-x-characterizes-D(v)} fails,  there exists a path $U=(u_0, u_{1}, \cdots, u_{\ell})$ with $\ell < \rho r$ such  that $u_0 \in \mathsf{D}_v\subset \mathsf{T}_v$ and $u_{\ell} \notin \mathsf{N}_{r-1}(x)$.  On the one hand, using the notation in Lemma \ref{blocktree} (ii) we have $u_0 \in V_{v;x}$.
Noting that  $u_{j} \neq x$ for all $j$, we then get   $u_{j} \in V_{v;x}$ for all $j$.
Let $P_{x \to u_0}$ be a path from $x$ to $u_0$ and let $P_{x \to u_{\ell}}=g(P_{x \to u_0}, U)$. Then the length of $P_{x \to u_{\ell}}$ satisfies $\mathrm{Length}(P_{x \to u_{\ell}})> \rho r$ since $u_{\ell} \notin \mathsf{N}_{r-1}(x)$. In addition, $P_{x \to u_{\ell}} \setminus \{x\} \subset V_{v;x}$.
On the other hand, the path $[x,w] \cup W^{1}$ has length $> \rho r$ and the vertices on this path (except $x$) are all in $V_{w;x}$. Thanks to Lemma \ref{blocktree}, $V_{v;x} \cap V_{w;x} = \emptyset$. Altogether, we get $x$ has two $\rho r$-arms, combining with $x\in V_{\mathrm{g}}$ yielding \eqref{eq-x-characterizes-D(v)} by Lemma \ref{ArmCharact}.

\noindent \underline{Case 3.B}:  there exists no vertex in $\mathsf{T}_v$ with two $\rho r$-arms. By Lemma~\ref{ArmCharact},
there exists $z \in G$
such that $\mathsf{N}_{\rho r}(z)$ has two $\rho r$-arms.
 Then   $ z \notin \mathsf{T}_v$.  Let $P_{v \to z}$ be a path from $v$ to $z$ and let $w$ be the last vertex on $P_{v \to z}$ such that $w \in \mathsf{T}_v$. Then clearly $w \in \partial_{\mathrm{i}} \mathsf{T}_v$ and the subpath $P_{w \to z} \subset G_{w}$.
Since $z$ has two $\rho r$-arms,  there is a path $Z$ with $2\rho r$ edges and with $z$ the middle point. Let $w'$ be the first point at which $P_{w \to z}$ intersects $Z$, and let $W^3$ be the path obtained by concatenating the subpath of $P_{w \to z}$  from $w$ to $w'$ and the longer subpath of $Z$ separated by $w'$.
Then $W^3  \subset G_{w}$  and has length $> \rho r$. 
 For our $v$, there is a unique  path $[v,w]$ from $v$ to $w$  and we let $x$ be the closest good vertex to $v$ on $[v, w]$. Then $x \in \mathsf{D}_{v}$ by definition.
 Using the same argument in the Case 3.A (replacing $W^{1}$ by $W^3$) we can show that \eqref{eq-x-characterizes-D(v)} holds.
  \end{proof}

   \begin{proof} [Proof of Lemma~\ref{blocktree}]
   We will employ proof by contradiction.

We first prove (i).   By definition, $E(G_{u}) \cap E(\mathsf{T})=\emptyset$ and $u\in V(G_{u}) \cap V(\mathsf{T}) $. If $w \in V(G_{u}) \cap V(\mathsf{T})$ for some $w\neq u$, then  there exists a path connecting $u$ and $w$ with edges in $E(\mathsf{T})$ and another path connecting $u$ and $w$ with edges in $E(G_{u})$. Thus, these two paths altogether form a cycle, contradicting the definition of bridging-tree. Therefore, $V(G_{u}) \cap V(\mathsf{T})= \{ u\}$.

 For $u_{1} \neq u_{2}$ in $ \partial_{\mathrm{i}} \mathsf{T}$, if $V(G_{u_{1}}) \cap V(G_{u_{2}})\neq  \emptyset$, then  $ V(G_{u_{1}}) = V(G_{u_{2}})$ (since $G_{u_1}$ and $G_{u_2}$ are connected components). Since we have shown that $V(G_{u_{i}}) \cap V(\mathsf{T})=\{u_{i}\}$ for $i=1,2$, this yields a contradiction. Therefore, $ V(G_{u_{1}}) \cap  V(G_{u_{2}})= \emptyset$ and consequently  $E(G_{u_{1}}) \cap E(G_{u_{2}}) = \emptyset $.

For $e \in E(G) \backslash E(\mathsf{T})$, let $z_0$ be an end-vertex of $e$ with $z_0 \not\in V(\mathsf T)$. Since $G$ is connected,  there exists a path $(z_{0}, z_{1}, \ldots, z_{m})$ so that $z_{m}$ is the only vertex in  $\mathsf{T}$.  This implies that $u = z_m \in \partial_{\mathrm{i}} \mathsf{T}$. In addition, by definition $e \in E(G_{u})$ and $z_0 \in V(G_{u} )$. Hence the decomposition \eqref{blocktree2} holds.

We next prove (ii). Let $[v,w]$ be the path from  $v$ to $w$ in the bridging-tree $\mathsf{T}$. If $(v=v_0, v_{1}, \ldots, v_{m}=w)$ is another path in  $G$. Let $k_{1}= \inf\{ j:v_{j} \notin [v,w] \}$ and  let  $k_{2}= \inf\{ j> k_{1} :v_{j} \in [v,w] \}$. Then we can see that the path $[v_{k_{2}}, v_{k_{1}}]$ in $\mathsf{T} $ and $(v_{k_{1}}, v_{k_{1}+1}, \ldots, v_{k_{2}} )$ form a simple cycle, contradicting with the definition of bridging-tree.
Finally, for $y \in [v,w]$, if $V_{v;y} \cap V_{w;y} \neq \emptyset$, then there exists a path from $v$ to $w$ without visiting $y$,  contradicting with the uniqueness of the path $[v,w]$.
\end{proof}

\begin{proof}[Proof of Lemma~\ref{ArmCharact}] 
  We first prove (i).
  Let $P$ be the longest path in $G$. Since $G$ is non-degenerate, we see that the length of $P$ is at least $2r - 2$ since otherwise $G \subset \mathsf N_{r-1}(u)$ (and thus $G$ is degenerate) where $u$ is the mid-point of $P$ (or one of the two mid-points of $P$ if $P$ has an even number of vertices). Therefore, there are at least $(r - \rho r - 1) \geq \log r$ (assuming that $r$ is sufficiently large) many vertices on $P$  which have two $\rho r$-arms. By Definition \ref{def-G-r-rho-L}, one of them must be good.

We now prove (ii).
We claim that, for every $y \notin \mathsf{N}_{r-1}(x)$ and every path $P_{y \rightarrow x}= (y_{0} = y, y_{1}, \ldots, y_{m} = x)$ from $y$ to $x$,  there exists  $j\in [m-\log r, m)$ such that $y_{j} \in V_{\mathrm g}$. Provided with this claim we now show that $\mathsf{N}_{\rho r+1}(v) \subset \mathsf{N}_{r-1}(x)$ if $x \in V_{g} \cap \mathsf{D}_{v}$. Denote by $\mathrm{dist}$ the graph distance on $G$. When $\mathrm{dist}(v,x)< \frac{(1- \rho) r}{4}$,  we have nothing to prove. Thus we may assume $\mathrm{dist}(v,x) \geq  \frac{(1- \rho) r}{4}$. Since $x \in \mathsf{D}_v$,
there exists a path $(x_0=x, x_{1}, \ldots, x_{m}=v)$ such that $x_{i} \in V_{\mathrm{b}}$ for $1\leq i\leq m$.
 If there exists $w \in   \mathsf{N}_{\rho r+1}(v) \setminus  \mathsf{N}_{r-1}(x)$, then there exists a geodesic $(v_0=v,v_{1},\ldots, v_{\ell}=w)$ from $v$ to $w$. Let $k=\inf\{i: x_{i} \in \{  v_{j}\} \}$ and let $k'$ be such that $x_{k}=v_{k'}$.
  Since $\mathrm{dist}(x_{k},w)< \mathrm{dist}(v,w)< \rho r+1$ and $\mathrm{dist}(x,x_{k}) \leq k$, we have $k \geq \mathrm{dist}(x,w)-\mathrm{dist}(x_{k},w) \geq \frac{(1-\rho)r}{4}$ by the triangle inequality.
  Note that $(x_0, \ldots, x_{k}=v_{k'}, v_{k'+1}, \ldots, v_{m})$ is a path connecting $x$ and $w \notin \mathsf{N}_{r-1}(x)$. Applying the claim to the reverse of this path,  we see that $\{x_{i}: 0<i \leq k\} \cap V_{\mathrm g} \neq \emptyset$ (assuming $r$ to be sufficiently large), arriving at a contradiction. Therefore, $  \mathsf{N}_{\rho r+1}(v) \subset \mathsf{N}_{r-1}(x)$. As a consequence, for all $u \in \mathsf{D}_v \cap V_{\mathrm b}$ we have $\mathsf{N}_{\rho r+1}(u) \subset \mathsf{N}_{r-1}(x)$  since $\mathsf{D}_v= \mathsf D_u$ (and thus we can apply the above reasoning with $v$ replaced by $u$). For  $u \in \mathsf{D}_v \cap V_{\mathrm g}$, there exists $y \in \mathsf{D}_v \cap V_{\mathrm b}$ neighboring to $u$, so $\mathsf{N}_{\rho r}(u) \subset \mathsf{N}_{\rho r + 1}(y) \subset \mathsf{N}_{r-1}(x)$.

It remains to prove the claim made at the beginning of the proof. Suppose $X^{1}$ and $X^{2}$ are two paths of length $\rho r$ and have the unique common vertex $x$.
If the path $P_{y \rightarrow x} \setminus \{x\}$ does not intersect with $X^{2}$, then  $u_{j}$ has two $\rho r $-arms $(y_0,\ldots,y_{j})$ and $(y_{j}, \ldots, y_{m}=x^{(2)}_0, \ldots, x^{(2)}_{\rho r})$ for all  $j \in [m-\log r, m)$. By Definition~\ref{def-G-r-rho-L}, one of these $y_j$'s must be good (since the number of such $y_j$'s is $\log r$), as desired. The case for $P_{u \rightarrow x} \setminus \{x\}$ does not intersect with $X^{1}$ can be treated similarly.

Finally, we consider the case when $P_{y \rightarrow x} \setminus \{x\}$ intersects with $X^{1}$ and $X^{2}$. Let $k_{j} = \sup\{i: y_{i} \in X^{j}\setminus \{ x  \} \} $ for $j=1,2$. Without loss of generality, assume that $k_{2}>k_{1}$. Let $\ell$ be such that  $u_{k_{1}} = x^{(1)}_{\ell}$.  Note that $\mathsf{O}=(x^{(1)}_0, \ldots, x^{(1)}_{\ell}=y_{k_{1}}, \ldots, y_{m}=x^{(1)}_{0} )$ is a simple cycle. Our proof proceeds by dividing into three cases depending on $\mathrm{Length}(\mathsf{O})$, i.e., the length of $\mathsf O$.

\noindent  Case 1: $\mathrm{Length}(\mathsf{O})> 2 \rho r$. Then  all the vertices on $\mathsf{O}$ (which include $u_{j}$ with $j \in [m-\log r, m) $) has two $\rho r $-arms. By Definition \ref{def-G-r-rho-L}, one of those $y_{j}$'s must be good.

\noindent  Case 2:  $  L<\mathrm{Length}(\mathsf{O}) < 2 \rho r $. Then by Definition \ref{def-G-r-rho-L} $y_{m-1}$ must be good (since $y_{m-1}$ is on a cycle of length larger than $L$).

\noindent Case 3: $\mathrm{Length}(\mathsf{O}) \leq L$. Then $k_{1}>m- \log r$ and $(y_{k_{1}}, \ldots, y_0) $ is a path of length $> \rho r$ which has disjoint edges with $\mathsf{O}$. By  Definition \ref{def-G-r-rho-L}, $y_{k_{1}}$ must be good.
   \end{proof}

 \section{Admissibility for Erd\H{o}s-R\'enyi graphs}\label{sec:ER-property}

In order to complete the proof of identifiability in Theorem~\ref{thm-main}, it suffices to prove the following result in light of Lemma~\ref{lem-admissibility-implies-success} and Proposition~\ref{prop-G-r-rho-L}. In what follows, we say a
graph is strongly-admissible, if each connected non-degenerate component of it is strongly-admissible.
\begin{proposition}\label{PropertyER}
Fix $\lambda, \epsilon_0>0$ and assume that $\rho, r$ satisfy  \eqref{eq-rho-r-identifiable}.
For any $\epsilon >0$, there exist  $N_{\epsilon} \in \mathbb{N}$  and $L=L_{\epsilon} \geq 1$ (both may depend on $\lambda$ and $\epsilon_0$) such that for all $n \geq N_{\epsilon}$,
$$\P(\mathcal{G}_{n,\frac{\lambda}{n}} \mbox{ is $(r, \rho, L)$-strongly-admissible}) \geq 1-\epsilon.$$
\end{proposition}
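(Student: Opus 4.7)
The plan is to verify the two conditions of Definition~\ref{def-G-r-rho-L} separately via first-moment methods combined with structural properties of $\mathcal{G}(n,\lambda/n)$.

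Condition~(2) follows from a standard count: the expected number of cycles of length $\ell$ in $\mathcal{G}(n,\lambda/n)$ is at most $\lambda^\ell/(2\ell)$, so the expected number of vertices lying on cycles of length less than $L$ is $O_{\lambda,L}(1)$, uniform in $n$. Fixing $L$ depending only on $(\epsilon,\lambda,\epsilon_0)$ and using $\log r\to\infty$, Markov's inequality yields condition~(2) with probability at least $1-\epsilon/2$.

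For condition~(1), the strategy is a case analysis over the ways in which a bad vertex $v$ satisfying the hypothesis (two $\rho r$-arms or a cycle in $\mathsf N_{\rho r}(v)$ through $v$) can fail the required conclusion. The key tools are: (a) the volume/complexity bound $|\mathsf N_{\rho r}(v)|=O(n^{1/2-\delta})$ with $\mathrm{Comp}(\mathsf N_{\rho r}(v))\leq 1$ from Lemma~\ref{lemma-complexity-of-Nr}, which uses the third inequality in~\eqref{eq-rho-r-identifiable}; (b) the matching-probability decay $\mathfrak p_{\rho r}\lesssim\alpha_\lambda^{\rho r}\leq n^{-1-\epsilon_0/2}$ from Lemma~\ref{Decay pr}; and (c) the PGW comparison Lemma~\ref{lem-compare-extension}. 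When $v$ lies on a cycle within $\mathsf N_{\rho r}(v)$, the complexity bound forces that cycle to be unique through $v$, condition~(2) forces its length to be $\leq L$, and routine first-moment bounds on long paths attached to short cycles force $H(\mathsf T_v)\leq L$ and $\partial_{\mathrm i}\mathsf T_v=\{v\}$.

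The main obstacle is the sub-case where $\mathsf N_{\rho r}(v)$ is a tree and $v$ has two $\rho r$-arms: here the conclusion of condition~(1) requires a cycle through $v$ that does not exist, so the plan must instead show that no such bad $v$ exists with high probability. The naive union bound over pairs only yields $\binom{n}{2}\mathfrak p_{\rho r}=O(n^{1-\epsilon_0/2})$, which is far from $o(1)$, so we must sharpen the matching probability for tree shapes with two $\rho r$-arms. The intended route is to iterate the recursion~\eqref{eq-mathfrak-p-r-inequality} separately along each of the two surviving branches, each contributing an additional decay factor relative to Lemma~\ref{Decay pr}'s worst case, and to handle close pairs $d(u,v)\leq 2\rho r$ separately via the volume bound $|\mathsf N_{2\rho r}(v)|=O(n^{1-2\delta})$ together with the rigidity imposed by overlapping isomorphic neighborhoods. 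Closing the first-moment bound for this sub-case to $o(1)$ is where I expect most of the technical work to lie.
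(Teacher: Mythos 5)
Your treatment of condition (2) is fine and matches the paper, but your plan for condition (1) has genuine gaps. First, your input (a) is not available: a union bound over the $n$ vertices only yields $\mathrm{Comp}(\mathsf N_r(v))\leq s_\lambda$ for a constant $s_\lambda$ of order $1/\delta$ (this is what Lemma~\ref{lemma-complexity-of-Nr} states), not $\mathrm{Comp}(\mathsf N_{\rho r}(v))\leq 1$; a single vertex has probability roughly $(|\mathsf N_{\rho r}(v)|^2/n)^2$ of carrying two surplus edges, and since typical neighborhood volumes can be $n^{c}$ with $c>1/4$ (e.g.\ for large $\lambda$ and $\rho$ close to $1$), the expected number of vertices whose $\rho r$-neighborhood has complexity at least $2$ can grow polynomially. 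So uniqueness of the simple cycle through $v$, as well as $H(\mathsf T_v)\leq L$ and $\partial_{\mathrm i}\mathsf T_v=\{v\}$, cannot be read off a deterministic complexity bound. Nor are they ``routine first-moment bounds'': conditioned on $v$ lying on a short cycle, the tree hanging off $v$ has height $>L$ with probability bounded away from $0$, so the only thing forcing $H(\mathsf T_v)\leq L$ is the conjunction with $v$ being bad, i.e.\ with the existence of a second vertex $u$ whose $(r+1)$-neighborhood is isomorphic --- and exploiting that conditioning is precisely the hard part. In the paper these conclusions come out of the reduced-BFS collision analysis (the quantities $\Xi_1,\Xi_2,\Lambda_1,\Lambda_2,\Lambda_3$ and the case split $\Xi\in\{0,1,2\}$ of Lemma~\ref{lemma-reducedBFS}), not from complexity alone; note in particular that the overlapping case $\Xi=2$ (two bad vertices on a short cycle, Figure~\ref{Fig-example-cycle}) genuinely occurs and must be accommodated by extracting the cycle structure, not excluded.

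Second, the mechanism that turns every per-pair estimate into $o(n^{-2})$ is missing from your outline. Your idea that two surviving arms give an extra decay factor is indeed how the paper handles the disjoint-tree case (its $\Xi=0$ case, yielding $\alpha_\lambda^{2r}=o(n^{-2})$), but to run it --- and to control the overlapping cases --- one needs three ingredients you do not supply: (a) the cut/graft construction of $\mathcal T_{\mathrm{aux}}(u),\mathcal T_{\mathrm{aux}}(v)$ and their comparison with a pair of independent PGW trees (Lemma~\ref{lemma-aux-tree-vs-GW-tree}); (b) the uniform conditional lower bound $\P\bigl(\mathcal T_{\mathrm{aux}}(u)|_r\sim\mathcal T_{\mathrm{aux}}(v)|_r \mid \Omega_a\cap\Omega_b\bigr)\geq 1/C_\lambda$ of Lemma~\ref{lemma-condi-isomorphic-prob-aux-tree}, which is what converts the hypothesis ``the actual neighborhoods are isomorphic'' (where cycles may force deleted edges) into a PGW-tree isomorphism event to which Lemma~\ref{Decay pr} applies; and (c) volume bounds \emph{conditioned on isomorphism} (Lemmas~\ref{cdtmoment}, \ref{isodecay} and \ref{lemma-size-of-bridging-tree}), which shrink $|\mathcal T_{\mathrm{cut}}|_r|$ from the a priori $n^{(1-\delta)/2}$ to $n^{(\epsilon_0\wedge\delta)/9}$. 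Without (c), the collision and surplus-edge probabilities you need (e.g.\ that a bad $v$ with two arms and a tree neighborhood does not exist, or that at most two collisions occur) are only of order $n^{-\delta}$ per pair, nowhere near $o(n^{-2})$; and your proposed handling of close pairs via $|\mathsf N_{2\rho r}(v)|=O(n^{1-2\delta})$ plus unspecified ``rigidity'' does not close this gap. These ingredients constitute the core of the paper's proof of Proposition~\ref{PropertyER} and would have to be developed for your plan to go through.
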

In the rest of the paper, unless otherwise specified, we assume that \eqref{eq-rho-r-identifiable} holds.

\subsection{Proof of Proposition~\ref{PropertyER}}

In this subsection, we prove Proposition~\ref{PropertyER} with postponing the proof for Lemma~\ref{lemma-reducedBFS} (below) to later subsections. To this end, we will employ breadth-first-search (BFS) process simultaneously from a pair of vertices $u, v\in \mathcal G$ (which we refer to as \emph{reduced BFS}). In order to approximate their $r$-neighborhoods by  independent  PGW$(\lambda)$ trees, we will need some kind of ``cut off'' and ``graft'' operations as we describe in what follows.

We now describe our reduced BSF (with respect to $u, v\in \mathcal G$) which is a modification of the standard BFS. As a comment on notation below, we will denote by $A_{t}$ for \emph{active} vertices, i.e., we are going to explore their neighbors;
we denote $R_{t}$ for \emph{removed} vertices, i.e., we will not  explore  their neighbors; we denote $U_{t}$ for \emph{unexplored} vertices, i.e., these vertices have not be explored as neighbors of some active vertices.
Initially we set $  R_0=\emptyset$,
    $A_0(v)=\{v\}$,  $A_0(u)=\{u\}$,  $A_0 = A_{0}(u) \cup A_{0}(v)$  and $U_0= V \backslash A_0$.
For $t\geq 0$, as long as $A_{t}(u)\cup A_{t}(v) \neq \emptyset$, we make the following recursive definition (which corresponds to a ``search process'' from $A_t(u) \cup A_t(v)$) where the set $R_t$ corresponds to the aforementioned ``cut off'' operation:
    \begin{equation}\label{eq-def-Reduce-BFS}
\begin{split}
    R_{t+1} &= \{ w \in U_{t} : \exists x \in A_{t}(u), y \in A_{t}(v) , \mathcal{G}_{xw} =\mathcal{G}_{yw} =1  \}\,; \\
    A_{t+1}(u)&= \{ w \in U_{t} :  \exists x \in A_{t}(u)  , \mathcal{G}_{xw}   =1  \} \backslash R_{t+1} \, ; \\
    A_{t+1}(v)&= \{ w \in U_{t} :  \exists y \in A_{t}(v)  , \mathcal{G}_{yw}   =1  \} \backslash R_{t+1} \, ; \\
    U_{t+1} &= U_{t} \backslash (A_{t+1}(u) \cup A_{t+1}(v) \cup R_{t+1})\,.
\end{split}
    \end{equation}
 (We note that  $A_{t}(u), A_{t}(v), R_{t}$ for $ t \geq 0$ are pairwise disjoint.)

 We next inductively construct two rooted trees $\mathcal{T}_{\mathrm{cut}}(u)$ and $\mathcal{T}_{\mathrm{cut}}(v)$ with  vertex sets $\cup_{t}A_{t}(u)$ and $\cup_{t} A_t(v)$ respectively.  Recall that we have a pre-fixed ordering on $V$. For $t\geq 0$, we assume the first $t$-levels of $\mathcal{T}_{\mathrm{cut}}(v)$ and $\mathcal{T}_{\mathrm{cut}}(u)$ have been defined (and their $t$-th levels  are $A_{t}(v)$ and $A_t(u)$, respectively).
We arrange the vertices in $A_{t}(v)$ as $v_{t,1}, \ldots, v_{t,|A_{t}(v)|}$ according to our pre-fixed order.
 Then for each $j$,  we  add the edges
 \begin{equation*}
  \left\{ ( v_{t,j},  y) : \mathcal{G}_{v_{t,j} y}=1, y \in U_{t} \backslash R_{t+1} \mbox{ and } \mathcal{G}_{v_{t,i}y}=0 \mbox{ for } 1 \leq i < j  \right\}
 \end{equation*}
 to $E(\mathcal{T}_{\mathrm{cut}}(v))$. Note that the $(t+1)$-th level of $\mathcal{T}_{\mathrm{cut}}(v)$ is exactly $A_{t+1}(v)$.   This gives the tree $\mathcal{T}_{\mathrm{cut}}(v)$ and   we define   $\mathcal{T}_{\mathrm{cut}}(u)$ similarly.

Furthermore, we define the auxiliary tree $\mathcal{T}_{\mathrm{aux}}(v)$  as an enlargement of $\mathcal{T}_{\mathrm{cut}}(v)$  as follows.
For each $y \in \mathcal{T}_{\mathrm{cut}}(v) \cap A_{t}(u)$, if there is $w \in R_{t+1}$ such that $\mathcal{G}_{yw}=1$, we add an independent $\mathrm{Bin}(n, \lambda/n)$-Galton-Watson tree rooted at a \emph{copy} of $w$ (where all other vertices are labeled as $\infty$) to  $\mathcal{T}_{\mathrm{cut}}(v)$ by connecting this copy of $w$ to $y$ (as the child of $y$). This corresponds to the aforementioned ``graft" operation.
(Note that it may be slightly more natural to add a PGW($\lambda$) tree, and we chose to add a $\mathrm{Bin}(n, \lambda/n)$-GW tree just for the slight convenience of exposition in the proof of Lemma~\ref{lemma-aux-tree-vs-GW-tree}.)
Then we get a rooted   tree $\mathcal{T}_{\mathrm{aux}}(v)$. Similarly, we can define $ \mathcal{T}_{\mathrm{aux}}(u)$.
Indeed, $\mathcal{T}_{\mathrm{cut}}(v)$ can be obtained as a subtree of $\mathcal{T}_{\mathrm{aux}}(v)$ by deleting all the vertices whose labels also appear in $\mathcal{T}_{\mathrm{aux}}(u)$ (and similarly for $\mathcal{T}_{\mathrm{cut}}(u)$).


 \begin{figure}[htbp]
      \centering
      \includegraphics[scale=0.7]{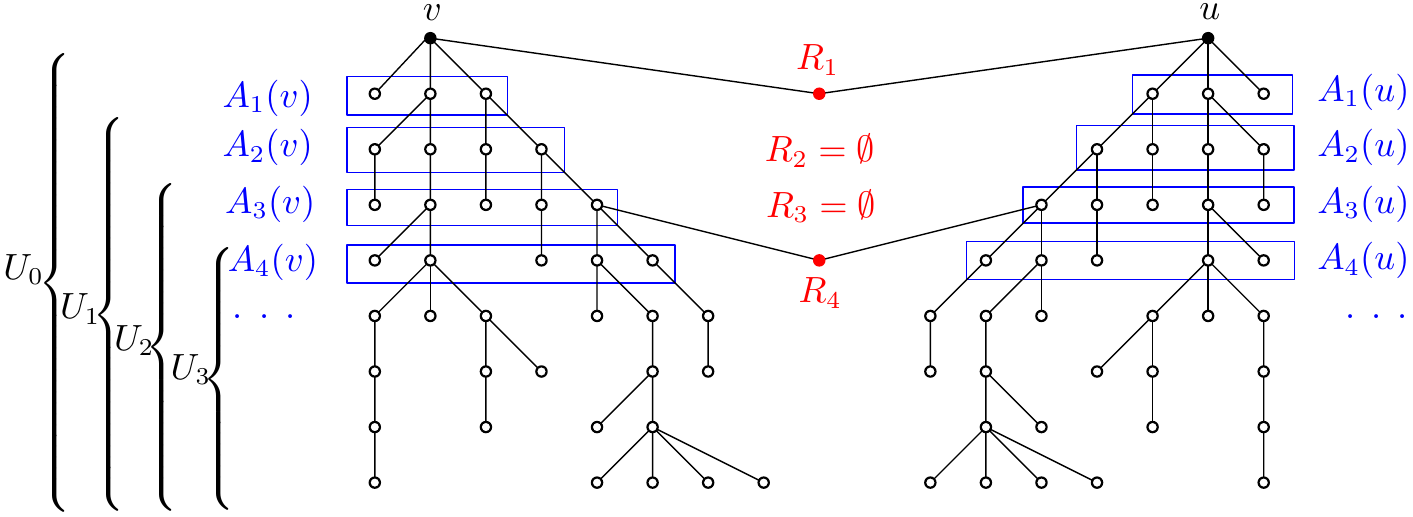}
     \caption{Example of reduced BFS} \label{p5}
 \end{figure}

Suppose that $u,v \in \mathcal{G}$ and  $\mathsf{N}_{r+1}(u) \sim \mathsf{N}_{r+1}(v) $. Let $\phi$ be an isomorphism from $\mathsf{N}_{r+1}(u)$ to $\mathsf{N}_{r+1}(v) $ such that $\phi(u)=v$. We claim  that
\begin{equation}\label{eq-Av-onto-Au}
  \phi(A_{t}(u) )=   A_{t}(v) \mbox{ and } \ \phi(R_{t}) = R_{t}  \, \text{ for all } t \leq r \,.
\end{equation}
Clearly \eqref{eq-Av-onto-Au} holds for $t=0$. Assume that \eqref{eq-Av-onto-Au} holds for $0,\ldots, t-1$, and $t\leq r$. Then for any $y \in A_{t}(u)$,  by \eqref{eq-def-Reduce-BFS} and by our choice of $\phi$ we have  $\phi(y) \in A_{t}(v) \cup R_{t}$. 
If $\phi(y) \in R_{t}$, then there is   $z \in A_{t-1}(u)$ such that $\mathcal{G}_{z,\phi(y)}=1$. Since $\phi(y) \in R_{t}$ and $t\leq r$, we have $z \in \mathsf{N}_{r+1}(v)$, and hence  $(z, \phi(y)) \in E(\mathsf{N}_{r+1}(v))$. Hence  $(\phi^{-1}(z), y) \in E(\mathsf{N}_{r+1}(u)) $. By our induction hypothesis  $\phi^{-1}(z) \in A_{t-1}(v)$, so $y \in R_{t}$, which contradicts with $y\in A_t(u)$. Thus $\phi(y) \in A_{t}(v)$. This implies that $\phi(A_{t}(u))=A_{t}(v)$, and as a result we also have $\phi(R_{t})=R_{t}$.

We  need some  more quantities to describe the structure of the $r$-neighborhoods of $u,v $.  
Let $A_{\leq r}(u) =\cup_{t=0}^{r}A_{t}(u)$,  $R_{\leq r} =\cup_{t=0}^{r}R_{t}$, and  $A_{\leq r} = A_{\leq r}(u)\cup A_{ \leq r }(v)$.

\begin{itemize}
\item Let $\Xi_{1} = \sum_{t=0}^{r}\sum_{x \in A_{t}(u), y \in A_{t}(v)} \mathcal{G}_{xy}$. If $x \in A_{t}(u), y \in A_{t'}(v)$ and $\mathcal{G}_{xy}=1$, by the definition of reduced BFS (see \eqref{eq-def-Reduce-BFS})  we have  $t=t'$.  Hence $\Xi_{1}  = \sum_{x \in A_{\leq r}(u), y \in A_{\leq r}(v)} \mathcal{G}_{xy} $.
\item  Let $\Xi_{2} = \sum_{t=0}^{r-1}\sum_{w \in U_{t}} \mathcal{G}_{w, A_{t}(u)} \mathcal{G}_{w, A_{t}(v)}$, where $\mathcal{G}_{w,A}= \sum_{y \in A} \mathcal{G}_{wy}$ for a subset $A$.
We can see that  $ |R_{\leq r}|=
\sum_{t=0}^{r-1} \sum_{w \in U_{t}}  1_{\{\mathcal{G}_{w,A_{t}(u) } \geq 1 \}}   1_{\{\mathcal{G}_{w,A_{t}(v) } \geq 1 \}}    $, and hence  $|R_{\leq r}|  \leq \Xi_{2}  $.

\item Let $\Lambda_{1}(u)=\mathrm{Comp}(\mathcal{G}[A_{\leq r}(u)]) $, where $\mathcal{G}[A_{\leq r}(u)]$ is the  subgraph on $\mathcal{G}$ induced by $ A_{\leq r}(u)$ and (we recall that) $\mathrm{Comp}(G)$ is the \emph{complexity} for a graph $G$ (that is, $\mathrm{Comp}(G)$ is the minimal number of edges that one has to remove from $G$ so that no cycle remains). 
 Similarly let $\Lambda_{1}(v)= \mathrm{Comp}(\mathcal{G}[A_{\leq r}(v)]) $.  Let $\Lambda_{1} = \Lambda_{1}(u)+ \Lambda_{1}(v)$. By \eqref{eq-Av-onto-Au}, if $\mathsf{N}_{r+1}(u) \sim \mathsf{N}_{r+1}(v) $, we have $\Lambda_{1}(u) =\Lambda_{1}(v) $.

\item Write $A_{[t,r]}(u)=\cup_{s=t}^{r}A_{s}(u)$ for $t\leq r$ and define
\begin{equation*}
  \Lambda_{2}(u)=\sum_{t=0}^{r} \sum_{w \in R_{t}, x \in A_{ [t,r]}(u)} \mathcal{G}_{w,x}  +
\sum_{x \in A_{\leq r}(u)}\,\sum_{w \in R_{\leq r}} \sum_{y \in \mathsf{N}_{r}(w) \cap \mathsf{N}_{r}(u) \backslash (A_{\leq r}\cup R_{\leq r}) }   \mathcal{G}_{xy}\,.
\end{equation*}
Similarly we define $\Lambda_{2}(v)$.  We can see that when $\Xi_{2}=|R_{\leq r}|$ and $\Lambda_{1}(u)=\Lambda_{2}(u)=0$, for each $w \in R_{\leq r}$ there exists a unique path in $\mathcal{G}[A_{\leq}(u)]$ from $u$ to $w$. 
Let $\Lambda_{2}= \Lambda_{2}(u)+ \Lambda_{2}(v)$. By \eqref{eq-Av-onto-Au}, if $\mathsf{N}_{r+1}(u) \sim \mathsf{N}_{r+1}(v) $, then $\Lambda_{2}(u)=\Lambda_{2}(v)$.

  \item  Let $\Lambda_{3}$ be the indicator function of the event that there are $w_{1} \neq  w_{2}$ in $R_{\leq r}$ which are connected by a path in $(\mathsf N_r(u)\cup \mathsf N_r(v)) \setminus A_{\leq r}$.  
\end{itemize}
Finally, let $\Xi(u, v) = \Xi =\Xi_{1}+ \Xi_{2}$ and let $\Lambda(u, v) = \Lambda=  \Lambda_{1}+\Lambda_{2}+\Lambda_{3} $.

\begin{lemma}\label{lemma-reducedBFS}
Assume \eqref{eq-rho-r-identifiable}. For any $\epsilon>0$, there exist $N_{\epsilon} \in \mathbb{N}$ and $ L_\epsilon \geq 1$ (which may depend on $\lambda$ and $\epsilon_0$) such that  for every $n \geq N_{\epsilon}$ the following holds
with probability at least $1-\epsilon$.  For any two vertices  $u$, $v$ in  $\mathcal{G}_{n,\frac{\lambda}{n}}$   such that $\mathsf{N}_{r+1}(u) \sim \mathsf{N}_{r+1}(v) $ and $\mathsf{N}_{r}(v)$ survives (recall that this means $\mathsf N_r(v) \neq \mathsf N_{r-1}(v)$), we have
\begin{enumerate}[(i)]
\item $\Xi   \leq 2$. As a consequence,  $\Xi_{2}=|R_{\leq r}|$.
\item If $\Xi =0 $, then  $\Lambda=0$, and $u$ does not have two $r$-arms.
\item If $\Xi=1$, then  $\Lambda=0$, and
$H(\mathcal{T}_{\mathrm{cut}}(v) )    <  \rho' r$, where $\rho'= \rho'(\epsilon_0)=\frac{1+ \epsilon_0/2}{1+\epsilon_0 }$.
  \item If $\Xi = 2 $, then  $\Lambda=0$, and
   $H(\mathcal{T}_{\mathrm{cut}}(v) )    \leq  L_{\epsilon}$.  
\end{enumerate}
\end{lemma}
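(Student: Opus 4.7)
The plan is a union bound over pairs $(u,v) \in V(\mathcal G) \times V(\mathcal G)$, with per-pair probabilities computed by coupling the reduced BFS output with a pair of independent Poisson--Galton--Watson trees. Specifically, by a standard subgraph-exposure argument the pair $(\mathcal T_{\mathrm{aux}}(u), \mathcal T_{\mathrm{aux}}(v))$ can be coupled with two independent $\mathrm{Bin}(n,\lambda/n)$-Galton--Watson trees, and the latter are total-variation close to two independent $\mathrm{PGW}(\lambda)$-trees as long as their sizes remain $o(\sqrt n)$. By \eqref{eq-alpha-lambda-bound}, the third inequality of \eqref{eq-rho-r-identifiable}, and Lemma \ref{lemma-complexity-of-Nr}, every relevant $r$-neighborhood has size $\lesssim \lambda^r \leq n^{1/2 - \delta}$ with high probability, so this regime applies throughout.

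For a fixed pair $(u,v)$ and a given configuration of $(\Xi_1, \Xi_2, \Lambda_1, \Lambda_2, \Lambda_3)$, the probability factors (up to $\mathrm{polylog}(n)$ overhead) as a combinatorial cost times a topological cost. Each individual $\Xi$- or $\Lambda$-event corresponds to an extra edge occurring with probability $\lambda/n$, giving a factor $O_\lambda(1/n)$ per event. The topological cost is the probability of $\mathsf N_{r+1}(u) \sim \mathsf N_{r+1}(v)$ with the prescribed structural constraints, and the key input is Lemma \ref{Decay pr}: the probability that two independent $\mathrm{PGW}(\lambda)$-trees are rooted-isomorphic on their first $h$ levels (and both survive to level $h$) is $\asymp \alpha_\lambda^h$. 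The choice of $r$ in \eqref{eq-rho-r-identifiable} gives $\alpha_\lambda^r = n^{-(1+\epsilon_0)}$ and $\alpha_\lambda^{\rho r} \leq n^{-1-\epsilon_0/2}$, which after summation over $n^2$ pairs will make all bad events tend to $0$.

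Applying this scheme: $\Xi \geq 3$ contributes per-pair at most $O(1/n^3)$ (ignoring the isomorphism cost), summed over pairs yielding $O(1/n)$; the event $\Xi_2 > |R_{\leq r}|$ is a similar higher-order event. For each $\Lambda_i \neq 0$, an extra cycle or reconnection edge is needed, which contributes an additional factor $1/n$ beyond whatever $\Xi$ already pays, and these are absorbed into the $\epsilon$ budget via a standard first-moment cycle count (using $\lambda^{2r} \ll n$). For part (ii) with two $r$-arms and $\Xi = 0$: refining the induction \eqref{eq-mathfrak-p-r-upper-bound} by constraining the root to have two surviving matched subtrees gives a bound $\lesssim \alpha_\lambda^{2r} = n^{-2(1+\epsilon_0)}$, and summation over $n^2$ pairs produces $n^{-2\epsilon_0} \to 0$. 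For part (iii), the event $H(\mathcal T_{\mathrm{cut}}(v)) \geq \rho' r$ with $\Xi = 1$ costs $\alpha_\lambda^{\rho' r}/n$ per pair, and with $\rho' = (1 + \epsilon_0/2)/(1+\epsilon_0)$ the sum is $n \cdot \alpha_\lambda^{\rho' r} = n^{-\epsilon_0/2}$. For part (iv), the event $H > L_\epsilon$ with $\Xi = 2$ costs $\alpha_\lambda^{L_\epsilon}/n^2$ per pair, so the total is $\alpha_\lambda^{L_\epsilon}$, made $< \epsilon$ by choosing $L_\epsilon$ large.

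The hardest part, and where most of the technical labor will reside, is the per-pair estimate when $\Xi > 0$. When the two BFS processes interact through either a direct edge or a shared neighbor in $R$, the tree $\mathcal T_{\mathrm{cut}}(u)$ is no longer simply a rooted PGW tree: some of its natural descendants have been rerouted into $A_{\leq r}(v)$ or $R_{\leq r}$. Turning the isomorphism $\phi$ of $\mathsf N_{r+1}(u)$ and $\mathsf N_{r+1}(v)$ (via \eqref{eq-Av-onto-Au}) into a clean constraint on the auxiliary PGW trees therefore requires a case analysis of the locations and types of the $\Xi$- and $\Lambda$-events, in order to identify the correct subtrees on which the $\alpha_\lambda^H$ bound of Lemma \ref{Decay pr} applies. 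The rest of the proof is bookkeeping: collecting the combinatorial factors from the number of possible locations of these events and verifying that the exponent in $\alpha_\lambda$ is large enough so that the overall sum over pairs is $o(1)$.
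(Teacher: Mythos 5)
Your overall architecture does match the paper's (per-pair bounds of order $o(n^{-2})$ followed by a union bound, comparison of the auxiliary trees with independent PGW trees, and the exponents $\alpha_\lambda^{2r}$, $\alpha_\lambda^{\rho' r}\cdot\frac{1}{n}$, $\alpha_\lambda^{L_\epsilon}\cdot\frac{1}{n^2}$ for (ii)--(iv)), but the cost accounting on which your scheme rests is incorrect and hides the main quantitative inputs. A $\Xi$-event is an edge between the two explored clusters, so its cost is of order $r|A_{\leq r}|^2/n$, and a $\Lambda_2$- or $\Lambda_3$-event involves sets of size up to $n^{\frac{1-\delta}{2}}$, so its cost is of order $n^{-\delta}$ or $f(n)n^{-\frac{1+\delta}{2}}$ --- not $O(1/n)$ per event. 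With only the crude volume bound $|\mathsf N_r(v)|\leq n^{\frac{1-\delta}{2}}$ that you invoke, $\P(\Xi\geq 3)$ is roughly $(rn^{-\delta})^3$, nowhere near $o(n^{-2})$; to make the accounting work one needs $|\mathcal T_{\mathrm{cut}}(u)|_r|\leq n^{(\epsilon_0\wedge\delta)/9}$ on the isomorphism event, which is the paper's Lemma~\ref{lemma-size-of-bridging-tree} and itself requires the isomorphism-conditioned volume and tail estimates (Lemmas~\ref{cdtmoment} and~\ref{isodecay}) that you never use. Likewise, charging a single ``reconnection edge'' does not make $\P(\Xi=1,\Lambda_2\geq 1)$ small enough: the paper exploits the symmetry consequence of \eqref{eq-Av-onto-Au}, namely $\Lambda_2(u)=\Lambda_2(v)$, so the bad event must occur on both sides independently and its probability is squared. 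For part (iv), the per-pair bound is not simply $\alpha_\lambda^{L_\epsilon}/n^2$; one must control the interaction probability, which given the trees is of order $|Z_{\leq\ell}|^4/n^2$, on the event $\{\mathbf T\sim_\ell\mathbf T'\}$ and sum over $\ell\geq L$, and this is exactly the conditional fourth-moment bound of Lemma~\ref{cdtmoment}, an ingredient beyond Lemma~\ref{Decay pr}.

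More importantly, the step you yourself flag as the hardest --- converting $\mathsf N_{r+1}(u)\sim\mathsf N_{r+1}(v)$ into a constraint on the coupled PGW trees when $\Xi>0$ or cycles are present --- is left unresolved, and ``a case analysis of the locations of the $\Xi$- and $\Lambda$-events'' is not a mechanism for it. The obstruction is that $\mathcal T_{\mathrm{cut}}(u)|_r$ and $\mathcal T_{\mathrm{cut}}(v)|_r$ are BFS spanning trees depending on the arbitrary vertex ordering, so neighborhood isomorphism does not imply their isomorphism once extra edges are present. The paper's resolution is Lemma~\ref{lemma-condi-isomorphic-prob-aux-tree}: conditioning on the isomorphism class of $\mathcal G$ (with $\mathrm{Comp}\leq s_\lambda$ and $\Xi_2\leq s_\lambda$) and resampling labels uniformly, at least a $((s_\lambda+1)!)^{-s_\lambda}$ fraction of labelings are ``successful'' and each grafted pair of trees is isomorphic with probability at least $\gamma_\lambda^{s_\lambda}$, so every bad event under $\{\mathsf N_{r+1}(u)\sim\mathsf N_{r+1}(v)\}$ can be intersected with $\{\mathcal T_{\mathrm{aux}}(u)|_r\sim\mathcal T_{\mathrm{aux}}(v)|_r\}$ at the cost of a constant factor, after which Lemmas~\ref{lemma-aux-tree-vs-GW-tree} and~\ref{Decay pr} apply. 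Without this device (or an equivalent one), together with the corrected cost accounting above, your outline does not close; as written it has a genuine gap at its central step.
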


\begin{proof} [Proof of Proposition~\ref{PropertyER}]
Assuming that   $\mathcal{G}$ satisfies the properties in the statement of Lemma \ref{lemma-reducedBFS}, we claim that   for every non-degenerate $v $ with non-unique $(r+1)$-neighborhood, if $v$ has two $r$-arms or there is a cycle in $\mathsf{N}_{r+1}(v)$ containing $v$, then there exists a unique simple cycle $\mathsf{O}$ in $\mathsf{N}_{r+1}(v)$ containing $v$ and moreover $\mathrm{Length}(\mathsf{O}) \leq 4L_\epsilon$. The connected component of $v$ in the subgraph induced by $E(\mathcal{G})\backslash E(\mathsf{O})$ is a bridging-tree in $\mathcal{G}$ (which is $\mathsf{T}_{v}$) satisfying $H( \mathsf{T}_{v} ) \leq L_\epsilon$ and  $\partial_{\mathrm{i}} \mathsf{T}_{v}=\{v\}$.
Since $ r = \frac{(1+\epsilon_0) \log n}{\log \alpha_\lambda^{-1}}$
and we choose $\epsilon_0$ arbitrarily,  the  claim above also holds if we replace $r$ by $\rho r -1$ (the replacement also occurs in Lemma~\ref{lemma-reducedBFS}), which is (1) in Definition \ref{def-G-r-rho-L} (with $L$ replaced by $4 L_\epsilon$).   In addition, it is well known that  the number of  cycles of length $\ell$ in $\mathcal{G}$ converges to a Poisson random variable (see e.g., \cite[Corollary 4.9]{Bollobas01}). Thus, (2) in Definition \ref{def-G-r-rho-L} holds.
Therefore, it remains to prove the above claim.
 
Assume that $\mathsf{N}_{r +1 }(u) \sim \mathsf{N}_{r+1 }(v) $ for some $u \neq v$ in $\mathcal{G}$.  Then we do the reduced BFS  for $u,v$. By Lemma \ref{lemma-reducedBFS},  we have $\Xi =  \Xi(u, v)\leq 2$. We next show that $\Xi$ is not 0 or 1.
\begin{itemize}
\item If $\Xi(u, v)=0$, then by Lemma \ref{lemma-reducedBFS} $\Lambda(u, v)=0$ and $v$ does not have two $r$-arms. Also, $\Xi=\Lambda=0$ implies that $v$ is not on any cycle, arriving at a contradiction.
\item If $\Xi(u,v)=1$, by Lemma \ref{lemma-reducedBFS} we have   $\Lambda(u, v)=0$  and $H(\mathcal{T}_{\mathrm{cut}}(v)) \leq  \rho' r$. When $\Xi=\Xi_{2}=1$, let $P_{v \to w}$ be the unique path from $v$ to the 
 vertex $w \in R_{\leq r}$;  when $\Xi=\Xi_{1}=1$, let $P_{v \to w}$ be the unique path to the  vertex  $w \in A_{\leq r}(v)$ such that there exists $y \in A_{\leq r}(u)$ with $\mathcal{G}_{wy}=1$.
In both scenarios,  every path starting from  $v$ and not contained in $\mathcal{T}_{\mathrm{cut}}(v)$ must contain $P_{v \to w}$.
  If there is a simple cycle containing $v$, then this cycle also contains a vertex $x \notin \mathcal{T}_{\mathrm{cut}}(v)$ as $\Lambda_{1}=0$. Thus, there are two edge-disjoint paths from $v$ to $x$ both of which contain $P_{v \to w}$, arriving at a contradiction. Therefore, $v$ is not contained in any simple cycle.  In addition,  since $\Lambda=0$ and $H(\mathcal{T}_{\mathrm{cut}}(v)) \leq \rho' r$, every path from $v$ with length $r$ must contain a vertex not in $\mathcal{T}_{\mathrm{cut}}(v)$. By the same argument, $v$ does not have two $r$-arms either, arriving at a contradiction.
\end{itemize}

Therefore it must be $\Xi=2$ and $\Lambda=0$  by Lemma \ref{lemma-reducedBFS}. We may assume that $\Xi=\Xi_{2}=|R_{\leq r}| = 2$ and write $R_{\leq r}=\{w_{1},w_{2}\}$ (the other case can be  proved in the same manner).
For $i=1,2$, let $P_{v \to w_{i}}$ be the  path from $v$ to $w_{i}$  consisting of vertices in $\mathcal{T}_{\mathrm{cut}}(v)\cup \{w_{i}\}$, which is  unique as $\Lambda=0$. We similarly define $P_{u \to w_{i}}$ (note that here for instance $P_{u\to w_1}$ is the reverse of $P_{w_1 \to u}$).
Then $\mathsf{O}=P_{v \to w_{1}}\cup P_{ w_{1} \to u} \cup P_{u \to w_{2}} \cup P_{w_{2} \to v}$ is a cycle  in $\mathsf{N}_{r}(v)$ containing $v$.
Since   $H(\mathcal{T}_{\mathrm{cut}}(v)) \leq   L_{\epsilon}$,
we have $\mathrm{dist}(w_{i},v) \leq L_{\epsilon}$ for $i=1,2$. Similarly we have $\mathrm{dist}(w_{i}, u) \leq L_{\epsilon}$ for $i=1,2$. Therefore,  $\mathrm{Length}(\mathsf{O}) \leq  4L_{\epsilon}$.

We now show that $\mathsf{O}$ is simple. 
It suffices to show that $P_{v \to w_{1}} \cap  P_{v \to w_{2}} = \{v\}$ and $P_{u \to w_{1}} \cap  P_{u \to w_{2}} = \{u\}$.  We will prove the statement for $v$ (and that for $u$ can be proved similarly), which is divided into two cases.
\begin{itemize}
\item If $v$ has two $r$-arms, each of the two $r$-arms must visit  a vertex not in $\mathcal{T}_{\mathrm{cut}}(v)$ since $H(\mathcal{T}_{\mathrm{cut}}(v) ) \leq L_{\epsilon}$. Since $\Lambda=0$, these two $r$-arms must contain   $P_{v \to w_{1}}$, $P_{v \to w_{2}}$, respectively.   So $P_{v \to w_{1}} \cap  P_{v \to w_{2}} = \{v\}$.
\item If $v$ is contained by a simple cycle in $\mathsf{N}_{r+1}(v)$,  from $\Lambda=0$ we see there exists a vertex  $y \in A_{\leq }(u)$  on this simple  cycle.  Thus from $v$ to $y$ there are two  paths  only intersecting at $v$ and $y$.  Since $\Lambda=0$, these two  paths must contain   $P_{v \to w_{1}}$, $P_{v \to w_{2}}$, respectively.
So $P_{v \to w_{1}} \cap  P_{v \to w_{2}} = \{v\}$.
\end{itemize}
Combining with $\Lambda=0$ and $\Xi = 2$, we can then further deduce that $ \mathsf{O}$ is the unique simple cycle containing $v$  in $\mathsf{N}_{r+1}(v)$.

In addition, the connected component of  $v$ in $E(\mathcal{G})\backslash E(\mathsf{O})$  is $  \mathcal{T}_{\mathrm{cut}}(v)\backslash (P_{v \to w_{1}}\cup P_{v \to w_{2}} )$. Thus this component is a tree and has height at most  $L_{\epsilon}$. Since $\Lambda=0$,  every edge in this tree is not contained by any cycle in $\mathcal{G}$, yielding that this tree is a bridging-tree in $\mathcal{G}$ (i.e, it is $\mathsf{T}_{v}$). 
This completes the proof of (1) in Definition \ref{def-G-r-rho-L}.
 \end{proof}

It remains to prove Lemma~\ref{lemma-reducedBFS}. To this end, we prove additional properties for PGW trees in Section~\ref{sec:additional-GW-trees} and then provide the proof for Lemma~\ref{lemma-reducedBFS} in Section~\ref{sec:lem-BSF}.

\subsection{Additional properties for Galton-Watson trees}\label{sec:additional-GW-trees}

We  need to control the volume growth for a Galton-Watson tree as incorporated in Lemma~\ref{moments}; this is fairly standard and we include a proof only for completeness. The new ingredient of significance to us is the conditioning on isomorphism as in Lemma~\ref{cdtmoment}.

\begin{lemma}\label{moments}
Fix $\lambda \geq 1$. Let $(Z_{\ell})_{\ell \geq 0}$ be the number of vertices in the $\ell$-th level of a PGW($\lambda$)-tree. Then for every $m\geq 1$, there is a constant $C_{m,\lambda} > 0$ depending only on  $\lambda$ and $m$ (we denote by $C_m = C_{m, 1}$ for short) such  that
\begin{enumerate}[(i)]
   \item when $\lambda = 1$, $ \E[ Z_{\ell}^{m} ] \leq  C_{m} \ell^{m-1} $ for all $\ell \geq 1$;
   \item when $\lambda  > 1$, $ \E[ Z_{\ell}^{m} ] \leq  C_{m,\lambda} \lambda^{\ell m} $ for all $\ell \geq 1$.
\end{enumerate}
As a consequence, for  every $\theta > \log (\lambda ) \geq 0$,
\begin{equation}\label{eq-consequence}
  \lim_{r \to \infty} \frac{1}{r} \log \P \big( \mbox{$\sum_{\ell=0 }^{r}$} Z_{\ell} > e^{\theta r} \big) = - \infty \,.
\end{equation}
\end{lemma}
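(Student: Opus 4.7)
The plan is to prove both bounds by induction on $m$, exploiting the branching recursion and the fact that all moments of a Poisson distribution are finite. Conditional on $Z_\ell$, we have $Z_{\ell+1} = \sum_{i=1}^{Z_\ell} \xi_i$ with $\xi_i$ i.i.d.\ Poisson($\lambda$). Since Poisson moments are all finite, the conditional expectation
\[
\mathbb{E}[Z_{\ell+1}^m \mid Z_\ell] \;=\; P_{m,\lambda}(Z_\ell)
\]
is a polynomial in $Z_\ell$ of degree $m$, with leading coefficient exactly $\lambda^m$ and remaining coefficients depending only on $(m,\lambda)$. This identity is the engine of the proof, and everything else is driven by comparing the leading term against what we already know by induction.

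For the supercritical case $\lambda>1$, assume inductively that $\mathbb{E}[Z_\ell^j] \leq C_{j,\lambda}\lambda^{\ell j}$ for $j<m$ and all $\ell$. Taking expectations in the recursion gives
\[
\mathbb{E}[Z_{\ell+1}^m] \;\leq\; \lambda^m \mathbb{E}[Z_\ell^m] + \sum_{j<m} a_{m,j,\lambda}\,\mathbb{E}[Z_\ell^j] \;\leq\; \lambda^m \mathbb{E}[Z_\ell^m] + C'_{m,\lambda}\lambda^{\ell(m-1)},
\]
and a direct iteration starting from $\ell=1$ produces a bound of the form $C_{m,\lambda}\lambda^{\ell m}$ (the lower-order contribution, divided by $\lambda^{\ell m}$, forms a convergent geometric sum because $\lambda>1$). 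For the critical case $\lambda=1$, the leading coefficient is $1$, so the recursion becomes
\[
\mathbb{E}[Z_{\ell+1}^m] \;\leq\; \mathbb{E}[Z_\ell^m] + \sum_{j<m} a_{m,j}\,\mathbb{E}[Z_\ell^j] \;\leq\; \mathbb{E}[Z_\ell^m] + C'_m\,\ell^{m-2}
\]
by the inductive bound $\mathbb{E}[Z_\ell^j]\leq C_j \ell^{j-1}$; telescoping gives $\mathbb{E}[Z_\ell^m]\leq C_m\sum_{k\leq \ell}k^{m-2}\leq C_m\ell^{m-1}$, closing the induction. The base case $m=1$ is simply $\mathbb{E}[Z_\ell]=\lambda^\ell$.

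For the consequence \eqref{eq-consequence}, I would apply Markov's inequality to the $m$-th moment of $S_r := \sum_{\ell=0}^r Z_\ell$. Using $(r+1)^{m-1}\sum_\ell \mathbb{E}[Z_\ell^m]$ as a crude bound on $\mathbb{E}[S_r^m]$, part (ii) gives
\[
\P(S_r > e^{\theta r}) \;\leq\; e^{-m\theta r}\,\mathbb{E}[S_r^m] \;\leq\; C_{m,\lambda}(r+1)^m\lambda^{rm} e^{-m\theta r} \;=\; C_{m,\lambda}(r+1)^m e^{-m(\theta-\log\lambda)r},
\]
and likewise part (i) yields polynomial prefactor times $e^{-m\theta r}$ when $\lambda=1$. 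Since $\theta>\log\lambda$, the exponential rate $-m(\theta-\log\lambda)$ is negative, and letting $m\to\infty$ after taking $\limsup_{r\to\infty}\frac{1}{r}\log$ shows the limit is $-\infty$.

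There is no real obstacle here: the only mild point is ensuring that the polynomial $P_{m,\lambda}$ has coefficients depending only on $(m,\lambda)$, which follows from Poisson moment formulas (sums of i.i.d.\ Poissons are Poisson, whose $m$-th moment is a polynomial in the parameter given by Bell/Stirling numbers). Everything else is clean induction followed by a $\lim_{m\to\infty}$ at the end.
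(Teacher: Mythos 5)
Your proposal is correct and follows essentially the same route as the paper: induction on $m$ using the fact that $\E[Z_{\ell+1}^m\mid Z_\ell]$ is a polynomial in $Z_\ell$ with leading term $\lambda^m Z_\ell^m$ (the paper phrases this via Poisson factorial moments, you via Stirling-number expansions), followed by a high-moment Markov bound with $m\to\infty$ for \eqref{eq-consequence}. The only cosmetic difference is in the tail bound, where you control $\E[(\sum_\ell Z_\ell)^m]$ directly by a power-mean inequality while the paper first splits over levels with weights $\beta_\ell\propto\ell^{-2}$; both give the same exponential rate $-m(\theta-\log\lambda)$.
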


\begin{remark}\label{rmk-moments}
The same proof for Lemma~\ref{moments} below easily gives the same result for a Galton-Watson tree with offspring distribution as $\mathrm{Bin}(n, \lambda/n)$. As a result, in what follows we also apply Lemma~\ref{moments} in this case.
\end{remark}

\begin{proof}[Proof of Lemma~\ref{moments}]
We prove  (i) and (ii) by induction on $m$. The base case for $m = 1$ holds obviously.
Assume they hold for $1, \cdots, m-1$.
Since the conditional law of $Z_\ell$ given $Z_{\ell-1}$ is Poisson with mean $\lambda Z_{\ell-1}$,  we have that
\begin{equation*}
\E[ Z_{\ell}(Z_{\ell}-1) \ldots  (Z_{\ell}-m+1) |  Z_{\ell-1}] = \lambda^{m} Z_{\ell-1}^{m} \,.
\end{equation*}
Let  $C'_m= \sum_{j=0}^{m}\binom{m}{j} (m+1)^{m}$  we have
\begin{equation*}
  \E[ Z_{\ell}^{m}  ] \leq \lambda^{m} \E[ Z_{\ell-1}^{m}] + C'_m  \sum_{1 \leq j \leq m-1}\E[ Z_{\ell}^{j}]\,.
\end{equation*}
Then by our induction hypothesis we get that for all $\ell \geq 1$
\begin{align*}
  \E[ Z_{\ell}^{m}  ] & \leq   \E[ Z_{\ell-1}^{m}] +  (m C'_m  \max_{  j \leq m-1} C_{j } )  \ell^{m-2} \mbox{ for } \lambda = 1\,,\\
  \E [  (\tfrac{Z_{\ell}}{\lambda^{\ell}}) ^{m}  ] &\leq   \E[ (\tfrac{Z_{\ell-1}}{\lambda^{\ell-1}}) ^{m}] + (  m C'_m \max_{  j \leq m-1} C_{j,\lambda } ) \tfrac{1}{\lambda ^{\ell  }} \mbox{ for } \lambda >1\,.
\end{align*}
This completes the proof of (i) and (ii) by induction and by choosing  $C_m$ and $C_{m, \lambda}$ appropriately.

We next prove \eqref{eq-consequence}. For every $m \geq 1$, writing $\beta_\ell = \frac{\ell^{-2}}{\sum_{i\geq 1} i^{-2}}$ we have that
  \begin{equation*}
     \P \big( \mbox{$\sum_{\ell=0 }^{r} Z_{\ell}$} > e^{\theta r} \big)  \leq   \sum_{\ell=0 }^{r} \P( Z_{\ell} > e^{\theta r} \beta_\ell) \leq  e^{-m \theta r}  \sum_{\ell=0 }^{r} \E[ Z_{\ell}^{m} \beta_{\ell}^{-m} ]\,.
  \end{equation*}
  Then  using  (i) and (ii), we have
  \begin{align*}
 \limsup_{r \to \infty} \frac{1}{r} \log \P \bigg( \sum_{\ell=0 }^{r} Z_{\ell} > e^{\theta \ell} \bigg)
 \leq  - m \theta +  \limsup_{r \to \infty} \frac{1}{r} \log \left(  \sum_{\ell=0 }^{r} \E[ Z_{\ell}^{m}  \beta_{\ell}^{-m} ]  \right)
      \leq  - m [\theta - \log ( \lambda) ] \,,
  \end{align*}
which implies \eqref{eq-consequence} by sending $m\to \infty$.
\end{proof}

In what follows, for a rooted tree $T$  we write $Z_{m}(T)$   the number of the vertices in the   $m$-th level of $T$ and write $Z_{\leq m}(T)$ the number of vertices in the first $m$-levels.
Let $\mathbf{T}$, $\mathbf{T}'$ be  independent PGW$(\lambda)$ trees.   
 We next control the volume growth conditioned on isomorphism and also heights of the trees.

\begin{lemma}\label{cdtmoment}
Fix $\lambda>0$. For  $m\geq 0$, there exists a constant $C_{m}=C_{m}(\lambda) > 0$ such that
\begin{equation}\label{eq-volume-condition-isomorphic}
 \E( [Z_{\ell}(\mathbf{T}) ]^{m}  |  \mathbf{T} \sim_{\ell} \mathbf{T}') \leq  C_{m} \ell^{m } \mbox{ for all }\ell \geq 1\,.
\end{equation}
As a consequence,
$\E( [ Z_{ \leq \ell}(\mathbf{T}) ]^{m}   |  \mathbf{T} \sim_{\ell} \mathbf{T}') \leq  C_{m}  \ell^{2 m +2}$.
 \end{lemma}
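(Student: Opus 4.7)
My plan is to introduce $M_\ell^{(m)} := \E\bigl[Z_\ell(\mathbf T)^m\, \mathbf 1_{\mathbf T \sim_\ell \mathbf T'}\bigr]$, reducing \eqref{eq-volume-condition-isomorphic} to the estimate $M_\ell^{(m)} \leq C_m \ell^m \mathfrak p_\ell$, and to derive a recursion in $\ell$ of the same flavour as the proof of Lemma~\ref{Decay pr} from which this bound follows by induction on $m$. Conditioning on $D = D' = k$, I write $Z_\ell(\mathbf T) = Z_{\ell-1}(\mathbf T_1) + B$ where $B = \sum_{i\geq 2} Z_{\ell-1}(\mathbf T_i)$, apply the union bound on the pivot pair $(i,j)\in [k]^2$ coming from \eqref{eq-isomorphism-induction}, and use exchangeability of the children (reducing $(i,j)$ to $(1,1)$ at the cost of a factor $k^2$) together with the binomial expansion of $(Z_{\ell-1}(\mathbf T_1)+B)^m$ and independence of the pivot pair from the remaining children, to obtain
\[
M_\ell^{(m)} \;\leq\; \sum_{k\geq 1} \mu_k^2\, k^2 \sum_{a=0}^m \binom{m}{a}\, M_{\ell-1}^{(a)}\, \bar M_\ell^{(m-a)}(k-1),
\]
where $\bar M_\ell^{(n)}(j)$ denotes the analogous moment for two independent trees whose roots are forced to have exactly $j$ children each, with $\bar M_\ell^{(0)}(j) = \bar g_j$ the corresponding iso probability.

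The diagonal contribution $a=m$ evaluates to $M_{\ell-1}^{(m)}\cdot \sum_{k\geq 1}\mu_k^2 k^2 \bar g_{k-1} = \lambda^2 g_\ell\, M_{\ell-1}^{(m)}$, via the identity $k^2 \mu_k^2 = \lambda^2 \mu_{k-1}^2$ and $g_\ell = \sum_j \mu_j^2 \bar g_j$ exactly as in \eqref{eq-mathfrak-p-r-upper-bound}; the lower bound in \eqref{eq-mathfrak-p-r-inequality} yields $\lambda^2 g_\ell \mathfrak p_{\ell-1}/\mathfrak p_\ell = 1+O(\mathfrak p_{\ell-1})$. For the off-diagonal terms $a<m$, a secondary induction on $j$ (via the same mechanism applied to $\bar M_\ell^{(\cdot)}(j)$, starting from $\bar M_\ell^{(n)}(1) = M_{\ell-1}^{(n)}$ for $n\geq 1$) yields $\bar M_\ell^{(n)}(j) \lesssim \mathfrak p_{\ell-1}^{j}$ for $j\geq 1,\ n\geq 1$; combined with the primary inductive hypothesis $M_{\ell-1}^{(a)} \lesssim \mathfrak p_{\ell-1}$ for $a<m$, each off-diagonal product carries at least a factor of $\mathfrak p_{\ell-1}^k$, so the sum $\sum_{k\geq 2}\mu_k^2 k^2 \mathfrak p_{\ell-1}^k$ (convergent thanks to the factorial tail of $\mu_k^2$) totals $O(\mathfrak p_{\ell-1}^2)$. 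Consequently $S_\ell^{(m)} := M_\ell^{(m)}/\mathfrak p_\ell$ obeys $S_\ell^{(m)} \leq S_{\ell-1}^{(m)}(1 + O(\mathfrak p_{\ell-1})) + C_m \mathfrak p_{\ell-1}$, and the geometric decay $\mathfrak p_\ell \lesssim \alpha_\lambda^\ell$ from Lemma~\ref{Decay pr} makes both correction terms summable, giving the uniform bound $S_\ell^{(m)} \leq C_m$ — in fact much stronger than the advertised $C_m \ell^m$.

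For the consequence I consider the two-parameter quantity $M_{j,\ell}^{(m)} := \E[Z_j(\mathbf T)^m \mathbf 1_{\mathbf T\sim_\ell \mathbf T'}]$ for $j \leq \ell$, which satisfies the analogous double recursion $M_{j, \ell}^{(m)} \leq \lambda^2 g_\ell\, M_{j-1,\ell-1}^{(m)} + O(\mathfrak p_{\ell-1}^2)$ with boundary value $M_{0, \ell}^{(m)} = \mathfrak p_\ell$; iterating in the direction $(j, \ell) \to (j-1, \ell-1)$ by the same argument yields $\E[Z_j^m \mid \mathbf T \sim_\ell \mathbf T'] \leq C_m$ uniformly in $j \leq \ell$, and then Hölder on $Z_{\leq\ell}^m = (\sum_{j=0}^\ell Z_j)^m \leq (\ell+1)^{m-1}\sum_{j\leq \ell} Z_j^m$ produces a bound comfortably inside $\ell^{2m+2}$. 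The main technical obstacle is the bookkeeping for the auxiliary quantities $\bar M_\ell^{(n)}(j)$: one must verify that each additional non-pivot subtree pair contributes the extra $\mathfrak p_{\ell-1}$ needed to make the recursion close, which requires iterating the secondary recursion in $j$ in a controlled way and using the factorial decay of $\mu_k^2$ to absorb the $k^2$ combinatorial factor from the union bound.
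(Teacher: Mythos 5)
Your overall architecture is the same as the paper's: decompose at the root via \eqref{eq-isomorphism-induction}, use $k^2\mu_k^2=\lambda^2\mu_{k-1}^2$, expand $(Z_{\ell-1}(\mathbf T_1)+B)^m$ binomially, and close a recursion in $\ell$ by induction on $m$, with the diagonal term handled exactly as in \eqref{eq-mathfrak-p-r-upper-bound}--\eqref{eq-mathfrak-p-r-inequality}. However, there is a concrete false step in your treatment of the off-diagonal terms: the claimed bound $\bar M_\ell^{(n)}(j)\lesssim \mathfrak p_{\ell-1}^{j}$ is wrong for $j\geq 2$, and the secondary induction you propose cannot prove it. When you peel off an additional non-pivot child pair, that pair is only required to have isomorphic restrictions to $\ell-1$ levels; this is a constant-probability event (of order $\gamma_\lambda$, by \eqref{eq-g-r-bound}), not an event of probability $\mathfrak p_{\ell-1}$, since those subtrees need not survive $\ell-1$ levels. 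For instance with $j=2$, $n=1$, the configuration ``one matched surviving pair, one matched pair of single vertices'' already shows $\bar M_\ell^{(1)}(2)\gtrsim \mathfrak p_{\ell-1}\gg\mathfrak p_{\ell-1}^2$. Consequently the assertion that ``each off-diagonal product carries at least a factor of $\mathfrak p_{\ell-1}^k$'' fails; each product carries at most two factors of $\mathfrak p_{\ell-1}$ (one from the pivot moment, one because a positive non-pivot exponent forces some non-pivot subtree pair to be $\sim_{\ell-1}$-isomorphic, via a union bound giving $\bar M_\ell^{(n)}(j)\leq j^{\,n+1}M_{\ell-1}^{(n)}$), with the polynomial factor in $j$ absorbed by the factorial decay of $\mu_j^2$. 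With this corrected estimate the total off-diagonal contribution is still $O(\mathfrak p_{\ell-1}^2)$ for $1\leq a\leq m-1$ and $O(\mathfrak p_{\ell-1})M^{(m)}_{\ell-1}$ for $a=0$, so your recursion $S_\ell^{(m)}\leq S_{\ell-1}^{(m)}(1+O(\mathfrak p_{\ell-1}))+C_m\mathfrak p_{\ell-1}$ and the uniform bound do survive; but as written the key estimate and its proof mechanism are incorrect, so the argument has a genuine (though repairable) gap. The same over-multiplicative bookkeeping reappears in the stated error term of your two-parameter recursion for the consequence.

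It is worth noting how this compares with the paper. The paper deliberately does not exploit that a positive exponent on the non-pivot part forces survival: it bounds those contributions crudely by $\E[Z_\ell^{m-j};\mathbf T\ \text{finite}]\lesssim \ell^{m-j-1}$ (Lemma~\ref{moments}), which injects an additive $C_0\ell^{m-1}$ per step and yields only $C_m\ell^m$ — sufficient for its purposes. Your sharper accounting, once fixed as above, gives the stronger uniform-in-$\ell$ bound. For the consequence, the paper avoids a two-parameter recursion altogether: it splits at level $j$, uses that on $\{\mathbf T\sim_\ell\mathbf T'\}$ some pair of level-$j$ vertices carries a $\sim_{\ell-j}$-isomorphic pair of subtrees, and invokes $\sup_{j\leq\ell}\mathfrak p_j\mathfrak p_{\ell-j}/\mathfrak p_\ell<\infty$ (Lemma~\ref{Decay pr}) together with a H\"older-type bound over levels; this is simpler than iterating a double recursion and you may wish to adopt it.
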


 \begin{proof}
The main task is to prove \eqref{eq-volume-condition-isomorphic} by induction.
 The case of $m=0$ is trivial. Assume \eqref{eq-volume-condition-isomorphic} holds for all $1, \ldots,  m-1$.
Using the same argument (and notations) in the proof of Lemma \ref{Decay pr},  we have
 \begin{align*}
& a_{\ell}(m) :=\E[  Z_{\ell}(\mathbf{T})^{m} ;  \mathbf{T} \sim_{\ell} \mathbf{T}' ] = \sum_{k \geq 1} \E [ Z_{\ell}(\mathbf{T})^{m}  ; \mathbf{T} \sim_{\ell} \mathbf{T}'  | D=D'=k  ]  \mu_{k}^{2} \\
& \leq  \sum_{k \geq 1} k^2  \mu_{k}^{2} \E  \{  [Z_{\ell-1}(\mathbf{T}_{1}) + Z_{\ell}(\mathbf{T}\backslash \mathbf{T}_{1})   ]^{m} ;  \mathbf{T}_{1} \sim_{\ell-1} \mathbf{T}_{1} ' , (\mathbf{T} \backslash \mathbf{T}_{1})|_{\ell}  \sim (\mathbf{T}'\backslash \mathbf{T}'_{1})|_{\ell} |D=D'=k  \}\,.
\end{align*}
By independence among different subtrees of PGW tree, we get that  for each $0 \leq j \leq m$,
\begin{align*}
& \E  \bigg[  Z_{\ell-1}(\mathbf{T}_{1})^{j}   Z_{\ell}(\mathbf{T}\backslash \mathbf{T}_{1})^{m-j} ; \mathbf{T}_{1} \sim_{\ell-1} \mathbf{T}_{1} ' , (\mathbf{T} \backslash \mathbf{T}_{1})|_{\ell}  \sim (\mathbf{T}'\backslash \mathbf{T}'_{1})|_{\ell} \bigg| D=D'=k  \bigg]   \\
&=   \E[Z_{\ell-1}(\mathbf{T})^{j} ; \mathbf{T}  \sim_{\ell-1} \mathbf{T}'   ]  \E [  Z_{\ell}(\mathbf{T})^{m-j} ; \mathbf{T}|_{\ell}   \sim \mathbf{T}'|_{\ell}  |D=D'=k-1 ]\,.
\end{align*}
Therefore, recalling \eqref{eq-relation-mu-k} and using straightforward computations we get that
\begin{align}\label{eq-a-l-m}
a_{\ell}(m)
&\leq   \lambda^2\sum_{0 \leq j \leq m} \binom{m}{j}   \E[Z_{\ell-1}(\mathbf{T})^{j} ; \mathbf{T}  \sim_{\ell-1} \mathbf{T}'   ]  \E [  Z_{\ell}(\mathbf{T})^{m-j} ; \mathbf{T}|_{\ell}   \sim \mathbf{T}'|_{\ell} ]  \nonumber \\
&\leq \lambda^2 g_{\ell} a_{\ell-1}(m)  + \lambda^2 \sum_{1 \leq j \leq m-1} \binom{m}{j}   a_{\ell-1}(j)  \E [  Z_{\ell}(\mathbf{T})^{m-j} ; \mathbf{T}|_{\ell}   \sim \mathbf{T}'|_{\ell} ] \nonumber \\
& \phantom{ \leq \lambda^2 g_{\ell} a_{\ell-1}(m)   }  +    \lambda^2  \mathfrak{p}_{\ell-1} \left( \E [  Z_{\ell}(\mathbf{T})^{m} ; \mathbf{T} \sim \mathbf{T}'  ] + a_{\ell}(m)   \right) \,,
\end{align}
where $g_\ell$ is defined as in \eqref{eq-def-g-r}.

Note that our goal is to provide an upper bound of $ a_{\ell}(m) /\mathfrak{p}_{\ell} $. To this end, note that
 \begin{enumerate}[(1)]
 \item  $ a_{\ell-1}(j)   \lesssim_{\lambda}  (\ell-1)^{j}\mathfrak{p}_{\ell-1}$ by the induction hypothesis for $1 \leq j \leq m-1$;
 \item $ \E [  Z_{\ell}(\mathbf{T})^{m-j} ; \mathbf{T}|_{\ell}   \sim \mathbf{T}'|_{\ell} ] \lesssim_{\lambda}  \ell^{m-j-1} $ for $1 \leq j \leq m-1$.
In order to see this, note that $\{\mathbf T|_\ell \sim \mathbf T'|_\ell\} \subset \{\mathbf T \sim_\ell \mathbf T'\} \cup \{\mathbf T \sim \mathbf T'\}$. Thus, we can combine the bound that  $\E [  Z_{\ell}(\mathbf{T})^{m-j} ;  \mathbf{T} \sim_{\ell} \mathbf{T}' ] \to 0$ as $\ell \to \infty$ by the induction hypothesis and Lemma~\ref{Decay pr}, as well as the bound that
  $\E [  Z_{\ell}(\mathbf{T})^{m-j} ;   \mathbf{T} \sim \mathbf{T}' ] \leq \E [  Z_{\ell}(\mathbf{T})^{m-j} ; \mathbf{T} \text{ finite} ] \lesssim  \ell^{m-j-1} $ by Lemmas~\ref{lem-monotone} and \ref{moments} (i). 
 \item $\E [  Z_{\ell}(\mathbf{T})^{m } ;   \mathbf{T} \sim \mathbf{T}' ] \leq \E [  Z_{\ell}(\mathbf{T})^{m }   ; \mathbf{T} \text{ finite} ] \lesssim \ell^{m-1}  $ by Lemmas~\ref{lem-monotone} and \ref{moments} (i).
 \end{enumerate}
Combining (1), (2), (3) with the inequality \eqref{eq-a-l-m},  we see that there exists a constant $C_0> 0$ depending on $\lambda$ and $m$ such that
  \begin{equation}\label{eq-a-ell-m-recursion}
      \frac{a_{\ell}(m)}{\mathfrak{p}_{\ell}}
      \leq
      \frac{1}{1- \lambda^2 \mathfrak{p}_{\ell-1}} \frac{\lambda^2 g_{\ell} \mathfrak{p}_{\ell-1}}{\mathfrak{p}_{\ell}} \frac{a_{\ell-1}(m) }{\mathfrak{p}_{\ell-1}} +  C_{0} \ell  ^{m-1}\,.
  \end{equation}
By Lemma~\ref{Decay pr} (note that $g_\ell \to \gamma_{\lambda}$ quickly as in \eqref{eq-g-r-bound}), we have that
\begin{equation*}
  \prod_{\ell}\frac{\lambda^2 g_{\ell} \mathfrak{p}_{\ell-1}}{\mathfrak{p}_{\ell}} \leq \limsup_{\ell \to \infty} \mathfrak p_{\ell}^{-1}\prod_{i=1}^{\ell} (\lambda^2 g_i)  < \infty \mbox{ and }\prod_{\ell}(1- \lambda^2 \mathfrak{p}_{\ell})^{-1} < \infty \,.
\end{equation*}
Combined with \eqref{eq-a-ell-m-recursion}, it yields that  there is a constant  $C_m' > 0$ depending on $m$ and $\lambda$, such that $ a_{\ell}(m)/\mathfrak{p}_{\ell} \leq C_m' \ell^{m} $.   This completes the proof for \eqref{eq-volume-condition-isomorphic}.

We next show how to derive the consequence from \eqref{eq-volume-condition-isomorphic}. 
If $ \mathbf{T} \sim_{\ell} \mathbf{T} $, then  
$ \mathbf{T} \sim_{j} \mathbf{T}' $  and there exist $ v, v'$ in  the $j$-th levels of $\mathbf{T}$, $\mathbf{T}'$ respectively such that 
$\mathbf{T}_{v} \sim_{\ell-j} \mathbf{T}_{v'}'$. Therefore we have, for $j \leq \ell$   
\begin{equation*}
  \E[  Z_{j}(\mathbf{T})^{m} 1_{ \{ \mathbf{T} \sim_{\ell} \mathbf{T}' \}}\mid  \mathbf{T}|_{j} ,\mathbf{T}'|_{j}    ]  \leq    Z_{j}(\mathbf{T})^{m}  1_{ \{ \mathbf{T} \sim_{j} \mathbf{T}' \}} \times Z_{j}(T)^{2} \times    \P(\mathbf{T}\sim_{\ell-j} \mathbf{T}) \,,
\end{equation*}
which implies that  $ \E( [Z_{j}(\mathbf{T}) ]^{m}  ; \mathbf{T} \sim_{\ell} \mathbf{T}') \leq  C_{m}' \mathfrak{p}_{j} j^{m+2} \cdot \mathfrak{p}_{\ell-j} \leq C_{m} \ell^{m+2} \mathfrak{p}_{\ell}  $ for $j \leq \ell$. Here $C_{m}= C_{m}' \sup_{\ell \geq 1, j \leq \ell } \frac{\mathfrak{p}_{j}\mathfrak{p}_{\ell-j}}{\mathfrak{p}_{\ell}} <\infty$ by  Lemma \ref{Decay pr}.
Therefore,
 \begin{align*}
  & \E( [ Z_{ \leq \ell}(\mathbf{T}) ]^{m}  ; \mathbf{T} \sim_{\ell} \mathbf{T}') =
   \sum_{1 \leq j_{1}, \ldots, j_{m} \leq \ell} \E[ Z_{ j_{1}}(\mathbf{T}) \cdots Z_{j_{m}}(\mathbf{T}) ; \mathbf{T} \sim_{\ell} \mathbf{T}' ]\\
  & \leq \sum_{1 \leq j_{1}, \ldots, j_{m} \leq \ell} \prod_{k=1}^{m} \E[  Z_{j_{k}}^{m}(\mathbf{T}) ; \mathbf{T} \sim_{\ell} \mathbf{T}' ]^{1/m} \leq \sum_{1 \leq j_{1}, \ldots, j_{m} \leq \ell} C_{m}  \ell^{m +2 } \mathfrak{p}_{\ell}=  C_{m} \ell^{2m +2 }\mathfrak{p}_{\ell}  \,, 
 \end{align*}
  yielding that  $\E( [ Z_{ \leq \ell}(\mathbf{T}) ]^{m}  |\mathbf{T} \sim_{\ell} \mathbf{T}')  \leq  C_{m} \ell^{2m +2 }$.
 \end{proof}

We next control the volume of PGW trees conditioned on isomorphism without constraints on  heights of the trees. Our bound is likely far from being sharp, as suggested by \cite[Theorem 2]{Olsson22} for a bound of exponential decay when the offspring distribution has finite support.

\begin{lemma}\label{degree}
  Let $\{\xi_i:  1 \leq i \leq m\}$  be i.i.d.\ Poisson $(\lambda)$ random variables. Then for sufficiently large $m$,
  \begin{equation*}
    \P \left(  \{\xi_{k}: 1\leq k\leq m\} = \{\xi'_{k}: 1\leq k\leq m\}  \right) \leq    \exp\{ - (\log m)^{ 1.7 } \}  \,.
  \end{equation*}
  \end{lemma}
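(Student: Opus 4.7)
The plan is to recast the event via type counts. Let $N_k = \#\{1 \le i \le m : \xi_i = k\}$ and let $N'_k$ be the analogous count for the independent sample; then the two multisets agree iff $N = N'$. Writing $p_k = e^{-\lambda}\lambda^k/k!$, I would Poissonize by introducing mutually independent $\mathrm{Poisson}(mp_k)$ variables $\tilde N_k$ and $\tilde N'_k$, with $\tilde M := \sum_k \tilde N_k \sim \mathrm{Poisson}(m)$. Since the conditional law of $\tilde N$ given $\tilde M = m$ is exactly that of $N$,
\begin{equation*}
\P(N = N') \;=\; \frac{\sum_{n:\sum n_k = m}\prod_k \P(\tilde N_k = n_k)^2}{\P(\tilde M = m)^2} \;\le\; \frac{\prod_{k \ge 0}\P(\tilde N_k = \tilde N'_k)}{\P(\tilde M = m)^2},
\end{equation*}
and Stirling gives $\P(\tilde M = m) \asymp m^{-1/2}$, contributing a benign factor $O(m)$ to the upper bound.

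Next I would use a one-line local-CLT bound for each factor. Writing $\mu_k := m p_k$,
\begin{equation*}
\P(\tilde N_k = \tilde N'_k) \;=\; \sum_j \P(\tilde N_k = j)^2 \;\le\; \max_j \P(\tilde N_k = j) \;\le\; \min\!\bigl(1,\, C\mu_k^{-1/2}\bigr),
\end{equation*}
where the last step uses the Stirling estimate $\P(\tilde N_k = \lfloor \mu_k\rfloor) \sim (2\pi\mu_k)^{-1/2}$ valid when $\mu_k \ge 1$. Let $K = K(m) := \max\{k : \mu_k \ge 1\}$; another short Stirling computation reduces $\mu_K \ge 1$ to $K\log K = (1+o(1))\log m$, hence $K = (1+o(1))\log m/\log\log m$.

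Combining these bounds, discarding the constraint $\sum n_k = m$ on the numerator (which is free since we are upper-bounding), and using $\log p_k = -\lambda + k\log\lambda - \log k!$ with $\sum_{k\le K}\log k! = \tfrac12 K^2 \log K + O(K^2)$,
\begin{equation*}
\log\P(N = N') \;\le\; O(\log m) - \tfrac12 \sum_{k\le K}\log \mu_k \;=\; -\tfrac12 K\log m + \tfrac14 K^2 \log K + O\!\bigl(K^2 + \log m\bigr).
\end{equation*}
Plugging in $K = (1+o(1))\log m/\log\log m$ yields $-\tfrac12 K\log m = -(1+o(1))(\log m)^2/(2\log\log m)$ and $\tfrac14 K^2 \log K = (1+o(1))(\log m)^2/(4\log\log m)$, while the error terms are $O((\log m)^2/(\log\log m)^2)$. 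The net bound is $\log \P(N=N') \le -\Theta\!\bigl((\log m)^2/\log\log m\bigr)$, which is much smaller than $-(\log m)^{1.7}$ for large $m$.

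The only non-routine ingredient is the Stirling calibration of $K$, plus the verification that the two leading terms in the last display do not cancel to leading order: indeed $-\tfrac12 + \tfrac14 = -\tfrac14 \neq 0$, so one really gets a negative $\Theta((\log m)^2/\log\log m)$ contribution. Beyond that, every step is just bookkeeping, and in particular the crude bound $\P(\tilde N_k = \tilde N'_k) \le 1$ for $k > K$ is already enough — no finer handling of the tail is needed.
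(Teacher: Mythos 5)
Your proposal is correct and takes essentially the same route as the paper: Poissonize the type counts, pay an $O(m)$ factor to decondition on the total being $m$, and bound $\prod_k \P(\tilde N_k=\tilde N'_k)$ by the Poisson mode bound $\asymp \mu_k^{-1/2}$ over $k$ up to order $\log m/\log\log m$. The paper is merely cruder at the last step (it uses only $\P(N_k=N'_k)\le m^{-0.01}$ for $k\le \frac{\log m}{100\log\log m}$, already giving $e^{-(\log m)^{1.8}}$), whereas you track the exact exponent $\Theta((\log m)^2/\log\log m)$; both comfortably beat $(\log m)^{1.7}$.
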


  \begin{proof}
Let $M$ and $M'$ be two independent Poisson variables with mean $m$.
In addition, let $N_{k}= \sum_{i=1}^{M} 1_{\{\xi_{i} =k\}} $  and $N'_{k}= \sum_{i=1}^{M'} 1_{\{\xi'_{i} =k\}} $ for all $k \geq 0$.
 By Poisson thinning property, we see that $N_k$ and $N'_k$ for $k = 0, 1, \ldots$ are mutually independent Poisson variables with
$\E  N_k = \E N'_k = m \mu_k$ (recall that $\mu_k = \P(\xi_1 = k)$). A simple computation gives that $\P(M = M' = m) \geq c/m$ for a positive constant $c>0$. Therefore,
\begin{align*}
\P &\left(  \{\xi_{k}: 1\leq k\leq m\} = \{\xi'_{k}: 1\leq k\leq m\}  \right) = \P \left( N_k = N'_k \mbox{ for } k \geq 0 \mid M = M' = m \right)\\
&\leq  O(m) \P \left( N_k = N'_k \mbox{ for } k \geq 0 \right) = O(m) \prod_{k=0}^\infty \P(N_k = N'_k) \leq O(m) e^{-(\log m)^{1.8}}
\end{align*}
where the last inequality follows from a straightforward bound on $\P(N_k = N'_k) \leq m^{-0.01} $ for $k\leq \frac{\log m}{100 \log \log m}$ (the power $1.8$ is chosen as a arbitrary number less than 2). This completes the proof of the lemma.
  \end{proof}

\begin{lemma}\label{isodecay}
For $\lambda \geq 1$,  let $\mathbf{T}, \mathbf{T}'$ be two independent PGW$(\lambda)$ trees. Then  we have for sufficiently large $m$,
  \begin{equation*}
  \P(  \mathbf{T} \sim \mathbf{T}' ;   |\mathbf{T}| \geq  m) \leq \exp\{ -  (\log m)^{3/2} \}\,.
  \end{equation*} 
  \end{lemma}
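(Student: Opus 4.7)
The plan is to split $\{\mathbf{T}\sim\mathbf{T}';\ |\mathbf{T}|\geq m\}$ into two regimes via a threshold $k=k_m:=\lceil \exp((\log m)^{0.9})\rceil$ on the maximum level size of $\mathbf{T}$.  Since $|\mathbf{T}|=\sum_{\ell}Z_\ell(\mathbf{T})\leq k(H(\mathbf{T})+1)$ whenever every level has fewer than $k$ vertices, the constraint $|\mathbf{T}|\geq m$ under that hypothesis forces $H(\mathbf{T})\geq m/k-1$.  Hence
\[
\P(\mathbf{T}\sim\mathbf{T}';\ |\mathbf{T}|\geq m)\leq \P\!\left(\mathbf{T}\sim\mathbf{T}';\ H(\mathbf{T})\geq \tfrac{m}{k}-1\right)+\P\!\left(\mathbf{T}\sim\mathbf{T}';\ \max_{\ell}Z_\ell(\mathbf{T})\geq k\right),
\]
and I will bound the two terms separately.

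For the tall-tree term, $\mathbf{T}\sim\mathbf{T}'$ together with $H(\mathbf{T})\geq r$ implies $\mathbf{T}\sim_{r}\mathbf{T}'$, so Lemma~\ref{Decay pr} yields a bound of $C\alpha_\lambda^{\,m/k-1}$.  Because $k=\exp((\log m)^{0.9})=o(m^{1-\delta})$ for every $\delta>0$, the quantity $m/k$ grows faster than any polynomial in $\log m$, and in particular $\alpha_\lambda^{\,m/k-1}\leq\exp(-(\log m)^{3/2})$ for $m$ large.

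For the wide-tree term, let $\ell^*=\min\{\ell\geq 0:Z_\ell(\mathbf{T})\geq k\}$, finite on this event.  The key structural observation is that $\mathbf{T}\sim\mathbf{T}'$ forces, at every level $\ell$, the multiset $N_\ell(\mathbf{T})$ of children-counts of the level-$\ell$ vertices to coincide with $N_\ell(\mathbf{T}')$.  Condition on $(\mathbf{T}|_{\ell^*},\mathbf{T}'|_{\ell^*})$: these two objects are independent, and given them, $N_{\ell^*}(\mathbf{T})$ is the multiset of $Z_{\ell^*}(\mathbf{T})$ i.i.d.\ $\mathrm{Poisson}(\lambda)$ variables coming from the level-$\ell^*$ branching of $\mathbf{T}$, while $N_{\ell^*}(\mathbf{T}')$ is an independent multiset of $Z_{\ell^*}(\mathbf{T}')$ i.i.d.\ $\mathrm{Poisson}(\lambda)$ variables.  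On the sub-event where the two sizes coincide at some common value $k'\geq k$, Lemma~\ref{degree} gives $\P(N_{\ell^*}(\mathbf{T})=N_{\ell^*}(\mathbf{T}')\mid \mathbf{T}|_{\ell^*},\mathbf{T}'|_{\ell^*})\leq\exp(-(\log k')^{1.7})\leq\exp(-(\log k)^{1.7})$.  Summing over the common size and then over $\ell$ (using that the events $\{\ell^*=\ell\}_{\ell}$ are disjoint with total probability at most $1$) yields $\P(\mathbf{T}\sim\mathbf{T}';\ \max_{\ell}Z_\ell(\mathbf{T})\geq k)\leq\exp(-(\log k)^{1.7})=\exp(-(\log m)^{1.53})$, which is also better than $\exp(-(\log m)^{3/2})$.

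The main obstacle will be executing this conditional-independence step cleanly: one must justify that on $\{\ell^*=\ell\}$ and given the first $\ell^*$ levels of both trees, the children-degree multisets at level $\ell^*$ are genuinely conditionally independent Poisson collections of the prescribed sizes, and then verify that the sum over the common size $k'\geq k$ telescopes neatly against $\sum_\ell\P(\ell^*=\ell)\leq 1$.  The choice $k=\exp((\log m)^{0.9})$ is only a convenient balance between the two regimes; any $k$ with $\exp((\log m)^{3/(2\cdot 1.7)})\ll k\ll m/(\log m)^{3/2}$ would give the same conclusion.
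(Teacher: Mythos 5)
Your argument is correct, but it takes a different route than the paper. The paper's proof is a one-step reduction: it sums over the total size $t\geq m$, observes that $\mathbf T\sim\mathbf T'$ with $|\mathbf T|=|\mathbf T'|=t$ forces the \emph{whole-tree} multisets of offspring counts to agree, and applies Lemma~\ref{degree} directly to the $t$ i.i.d.\ Poisson offspring variables arising from the breadth-first exploration, giving $\sum_{t\geq m}e^{-(\log t)^{1.7}}\leq e^{-(\log m)^{3/2}}$. You instead split according to whether $\mathbf T$ has a level of size at least $k=\exp((\log m)^{0.9})$: the tall-and-narrow case is handled by $\{\mathbf T\sim\mathbf T',\,H(\mathbf T)\geq r\}\subset\{\mathbf T\sim_r\mathbf T'\}$ together with Lemma~\ref{Decay pr} (valid since $\alpha_\lambda<1$ by Lemma~\ref{lem-alpha-lambda}), and the wide case by applying Lemma~\ref{degree} only to the children-count multisets at the first wide level $\ell^*$. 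The conditional-independence step you flag as the main obstacle does go through: $\{\ell^*=\ell\}$ and $\{Z_\ell(\mathbf T)=Z_\ell(\mathbf T')=k'\}$ are measurable with respect to $(\mathbf T|_\ell,\mathbf T'|_\ell)$, and by the branching property the level-$\ell$ offspring counts of the two trees are, given these restrictions, two independent families of i.i.d.\ Poisson$(\lambda)$ variables of the prescribed sizes; since isomorphism forces the two multisets (and sizes) to coincide, the uniform bound $e^{-(\log k')^{1.7}}\leq e^{-(\log k)^{1.7}}$ applies on each $\{\ell^*=\ell\}$ and sums against $\sum_\ell\P(\ell^*=\ell)\leq 1$, yielding $e^{-(\log m)^{1.53}}$, which beats the target. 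What the comparison buys: the paper's route is shorter and needs only Lemma~\ref{degree} plus the exploration coupling, while yours trades that for an extra ingredient (Lemma~\ref{Decay pr}) but only ever invokes the degree-multiset comparison at a single generation, where the i.i.d.\ structure is immediate from the branching property rather than from the BFS exploration of the entire tree.
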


  \begin{proof}
  Indeed,  for $\lambda > 1$ we have exponential decay in $m$, since $\{\mathbf{T} \sim \mathbf{T}'\}$ implies that $\mathbf{T}$ is a finite tree (Lemma~\ref{lem-monotone}). However,  we prove the above results for  all $\lambda \geq 1$.

  Note that $\mathbf{T} \sim \mathbf{T}' $ implies that the degree sequence  $(\xi_{i})_{i=1}^{|\mathbf{T}|}$ and $(\xi'_{i})_{i=1}^{|\mathbf{T}|}$ has the same empirical distribution,
  where $\xi_{i}$  is the number of the descendants in the tree  $\mathbf{T}$ of the $i$-th vertex in the tree $\mathbf{T}$ (here we may use  the breadth-first  order, but the ordering is irrelevant anyway since we are only interested in the empirical distribution). Applying Lemma \ref{degree}, we have,
  \begin{align*}
       \P( \mathbf{T} \sim \mathbf{T}' ; |\mathbf{T}| \geq  m)&= \sum_{t \geq m} \P( \mathbf{T} \sim \mathbf{T}' ; |\mathbf{T}| = t)  \leq \sum_{t \geq m} \P(   \{\xi_{k}: 1\leq k\leq t\} = \{\xi'_{k}: 1\leq k\leq t\}) \\
       & \leq \sum_{t \geq m}  \exp\{ -  (\log t)^{1.7} \}  \leq   \exp\{ -  (\log m)^{3/2} \}
  \end{align*}
  for large $m$. This completes the proof of the lemma.
  \end{proof}

In the end of this subsection, we prove some tail estimates for binomial variables. There is nothing novel, and we only record the proof for completeness.

\begin{lemma}\label{lemma-binomial-tail}
  Let  $X= \sum_{i=1}^{m} X_{i}$ where $X_i$'s are independent  and $X_i$ is a Bernoulli variable with parameter $p_i$ for $1\leq i\leq m$. Then for all  $x>0$,  we have
    \begin{equation*}
    \mathbb{P}(X \geq x) \leq  \left(\frac{e\mathbb{E} [X]}{x}\right)^{x} \,.
    \end{equation*}
    \end{lemma}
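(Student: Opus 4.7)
The plan is to apply the standard exponential Markov (Chernoff) inequality and then optimize the free parameter. For $x \leq \E[X]$ the claimed bound is trivial, since $e\E[X]/x \geq e > 1$ forces $(e\E[X]/x)^x \geq 1 \geq \P(X\geq x)$, so I may assume $x > \E[X]$ throughout.

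For any $t > 0$, Markov's inequality applied to $e^{tX}$ gives $\P(X \geq x) \leq e^{-tx}\,\E[e^{tX}]$. By independence and the elementary inequality $1 + u \leq e^u$,
\[
\E[e^{tX}] = \prod_{i=1}^m \bigl(1 + p_i(e^t - 1)\bigr) \leq \prod_{i=1}^m \exp\bigl(p_i(e^t-1)\bigr) = \exp\bigl(\mu(e^t-1)\bigr),
\]
where $\mu = \E[X] = \sum_i p_i$. Hence $\P(X \geq x) \leq \exp\bigl(-tx + \mu(e^t-1)\bigr)$ for every $t > 0$.

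The next step is to minimize the exponent over $t$. Setting the derivative to zero yields the optimal choice $t = \log(x/\mu)$, which is positive because we reduced to the case $x > \mu$. Substituting back gives
\[
\P(X\geq x) \leq \exp\bigl(-x\log(x/\mu) + x - \mu\bigr) = \Bigl(\frac{\mu}{x}\Bigr)^{x} e^{x-\mu} \leq \Bigl(\frac{e\mu}{x}\Bigr)^{x},
\]
where the last inequality uses $e^{-\mu} \leq 1$. Combining this with the trivial case $x \leq \mu$ yields the stated bound for all $x > 0$. No step here is a real obstacle: the argument is entirely routine, the only minor care needed is in handling the degenerate range $x \leq \E[X]$ separately so that the optimization $t = \log(x/\mu) \geq 0$ is legitimate.
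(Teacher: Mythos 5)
Your proof is correct and follows essentially the same route as the paper: a Chernoff bound using $\E[e^{tX}]\leq \exp\{\E[X](e^t-1)\}$. The only cosmetic difference is the tilt parameter — you take the exact optimizer $t=\log(x/\E[X])$ and dispose of the range $x\leq \E[X]$ trivially, while the paper sets $\theta=\log(1+x/\E[X])$, which is positive for all $x>0$ and so avoids the case split.
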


    \begin{proof}
     For any $\theta>0$, a direct computation yields
     \begin{align*}
     \mathbb{P}(X>x) &\leq e^{-\theta x} \mathbb{E} [e^{\theta X} ]  =  \exp \left\{ \sum_{i=1}^{m} \log (1+ p_{i} (e^{\theta}-1) )-\theta x\right\}   \\
     & \leq   \exp \left\{\mathbb{E} X\left(e^{\theta}-1\right)-\theta x\right\}\,.
     \end{align*}
     Setting $\theta=\log (1+x / \mathbb{E} [X])$ in the previous inequality, we get that
     \begin{equation*}
     \mathbb{P}(X>x) \leq\left(\frac{e \mathbb{E} [X]}{x+\mathbb{E} [X]}\right)^{x}  \leq\left(\frac{e}{x}\right)^{x}(\mathbb{E} X)^{x}\,. \qedhere
     \end{equation*}
  \end{proof}

  \begin{lemma}\label{lemma-sum-of-binomial}
   Assume $1\leq f(n) = o(\sqrt{n})$. Let $X_{1} \sim \mathrm{Bin}(f(n)^2, \frac{\lambda}{n})$.
 Let $  X_2 =\sum_{j=1}^{rn} \xi_{1,j} \xi_{2,j}$, where $(\xi_{i,j})$ are i.i.d. $\mathrm{Bin}(f(n), \frac{\lambda}{n})$ variables and are independent of $X_{1}$.
Then  $ \P(  X_1+X_2 \geq k  ) \lesssim_{\lambda}  \left( \frac{r f(n)^{2}}{n} \right)^{k}$ for $k=1,2,3$.
 \end{lemma}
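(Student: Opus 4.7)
The plan is to bound the tails of $X_1$ and $X_2$ separately and combine them using the independence of $X_1$ and $X_2$. For $X_1$, I would apply Lemma~\ref{lemma-binomial-tail} with $\E X_1 = \lambda f(n)^2/n$ to get $\P(X_1 \geq j) \lesssim_\lambda (f(n)^2/n)^j$ for $j \in \{1,2,3\}$.

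For $X_2 = \sum_{j=1}^{rn} W_j$ with $W_j = \xi_{1,j}\xi_{2,j}$, I would first apply Lemma~\ref{lemma-binomial-tail} to each $\xi_{i,j}$ and use the independence of $\xi_{1,j}$ and $\xi_{2,j}$ to obtain $\P(W_j \geq \ell) \lesssim_\lambda (f(n)/n)^{\ell+1}$ for $\ell \in \{1,2,3\}$; the ``$+1$'' in the exponent reflects the asymmetry that to push the product $\xi_{1,j}\xi_{2,j}$ past $\ell$ only one factor must jump while the other stays $\geq 1$. Then I would decompose $\{X_2 \geq k\}$ according to the integer partition of $k$ that records how the required units are distributed across distinct active indices: $k=2$ gives the partitions $1+1$ and $2$; $k=3$ gives $1+1+1$, $1+2$, and $3$. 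By a union bound over the choice of distinct indices combined with independence across $j$, a partition with $s$ parts $(\ell_1,\ldots,\ell_s)$ contributes at most $\binom{rn}{s}\prod_i (f(n)/n)^{\ell_i+1} \lesssim_\lambda r^s f(n)^{k+s}/n^k$, and comparing to the target $r^k f(n)^{2k}/n^k$ leaves a factor $(rf(n))^{s-k} \leq 1$ provided $rf(n) \geq 1$ (which holds in our regime since $r$ is large and $f(n) \geq 1$).

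Finally, by the independence of $X_1$ and $X_2$, $\P(X_1 + X_2 \geq k) \leq \sum_{j=0}^{k} \P(X_1 \geq j)\P(X_2 \geq k-j) \lesssim_\lambda \sum_{j=0}^{k} r^{k-j}(f(n)^2/n)^k \lesssim_\lambda (rf(n)^2/n)^k$, using $r \geq 1$ in the last step.

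The main (though routine) obstacle is the sub-case analysis for $X_2 \geq 2$ and $X_2 \geq 3$: I must verify that every partition of $k$ gives a bound of the same order as the ``generic'' partition $1+1+\cdots+1$ coming from $k$ distinct active indices each with $W_j = 1$. The asymmetric tail bound $\P(W_j \geq \ell) \lesssim_\lambda (f(n)/n)^{\ell+1}$ is exactly what makes the collapsed (single-index or otherwise non-generic) contributions absorbable into the generic term.
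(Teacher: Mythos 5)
Your proposal is correct and is essentially the paper's own argument: the paper likewise reduces to the number of ``active'' indices (via $X_3=\sum_{j}\mathbf 1_{\{\xi_{1,j}\geq 1\}}\mathbf 1_{\{\xi_{2,j}\geq 1\}}$, which is your partition-over-distinct-indices bookkeeping in conditional form) and charges one extra factor of $f(n)/n$ per unit of excess at each active index, the only cosmetic difference being that the paper bounds $X_1+X_3$ jointly with Lemma~\ref{lemma-binomial-tail} and conditions on $X_3\in\{1,2\}$, whereas you convolve $X_1$ with $X_2$ and take a union bound over integer partitions. One small point to include when verifying $\P(\xi_{1,j}\xi_{2,j}\geq 3)\lesssim_\lambda (f(n)/n)^{4}$: besides ``one factor jumps to $3$ while the other is $\geq 1$'' you must also cover the event $\{\xi_{1,j}\geq 2,\ \xi_{2,j}\geq 2\}$, which has the same order, so your claimed asymmetric tail bound (and hence the rest of the argument) stands.
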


 \begin{proof}
Let $X_{3}=  \sum_{j=1}^{rn} 1_{\left\{ \xi_{1,j} \geq 1 \right\}}  1_{\left\{ \xi_{2,j} \geq 1 \right\}}$. Then $X_3 \sim \mathrm{Bin}(rn, p_{n}^{2})$ where $p_{n}=\P(\xi \geq 1)  = (1+o(1)) \frac{\lambda f(n)}{n}$ (here we write  $\xi=\xi_{1,1}$ for short).
Thus,
 \begin{align}\label{eq-decomposition-probability-lemma-4.8}
   \P(  X_1+X_2 \geq 3  ) &\leq
      \P(  X_1+X_3 \geq 3  )  + \P(   X_2 \geq 3 ,  X_3 = 2  )  \nonumber\\
& \quad +  \P(   X_2 \geq 3 ,  X_3 = 1  ) + \P(   X_2 \geq 2 ,  X_3 = 1  ) \P(X_1 \geq 1) \,.
 \end{align}
By Lemma \ref{lemma-binomial-tail}, we have $\P(X_1 + X_3 \geq 3) \lesssim_\lambda (f(n)^2/n)^3 $ and in addition  $
  \P( \xi \geq k | \xi \geq 1  ) \lesssim_\lambda  (f(n)/n)^{k-1}$ for $k\geq 1$. Then,
 \begin{align*}
     \P(   X_2 \geq 3 |  X_3 = 1  )&=\P( \xi_{1,1} \xi_{2,1} \geq 3 |\xi_{1,1} \geq 1, \xi_{2,1} \geq 1  )\\
    & \leq 2 \P(  \xi   \geq 3 |  \xi \geq 1   ) + \P(  \xi   \geq 2 |  \xi \geq 1   )\P(  \xi   \geq 2 |  \xi \geq 1   ) \lesssim_{\lambda} \frac{f(n)^{2}}{n^{2}} \,.
     \end{align*}
   Similarly we have $\P(   X_2 \geq 2 |  X_3 = 1  ) \lesssim  \frac{f(n)}{n}$. Moreover, $\P(X_3 = 2) \lesssim_{\lambda} ((f(n))^2/n)^2$ and
   \begin{align*}
      \P(   X_2 \geq 3 |  X_3 = 2  )&=\P( \xi_{1,1} \xi_{2,1} + \xi_{1,2} \xi_{2,2}   \geq 3 |\xi_{i,j} \geq 1; i,j \leq 2  )\\
     & \leq 4 \P(  \xi \geq 2 |  \xi \geq 1   )   \lesssim_{\lambda} \frac{f(n)}{n} \,.
      \end{align*}
Plugging all these estimates into \eqref{eq-decomposition-probability-lemma-4.8} (together with straightforward bound on $\P(X_1 \geq 1)$ and $\P(X_3=1)$ as well as $\P(X_3 = 2)$),  we have $ \P(  X_1+X_2 \geq 3  ) \lesssim_{\lambda}  \frac{r^3 f(n)^{6}}{n^3} $. The same argument shows  $ \P(  X_1+X_2 \geq 2   ) \lesssim_{\lambda}  \frac{r^2 f(n)^{4}}{n^2} $ (and a much simpler arguments proves the bound for $\P(  X_1+X_2 \geq 1)$).
     \end{proof}

 \subsection{Proof of Lemma~\ref{lemma-reducedBFS}} \label{sec:lem-BSF}

This subsection is devoted to the proof of Lemma \ref{lemma-reducedBFS}. To this end, we need the following four lemmas whose proofs are presented at the end of this subsection.

 \begin{lemma}\label{lemma-complexity-of-Nr}
  For the Erd\H{o}s-R\'{e}nyi graph $\mathcal{G}=\mathcal{G}_{n,\frac{\lambda}{n}}$,  there exist constants $\delta =\delta_{\lambda, \epsilon_0}$ and $s_{\lambda}=s_{\lambda, \epsilon_0}$ depending only on $\lambda$ and $\epsilon_0$ such that for all $u, v\in \mathcal G$,
  \begin{align}
\P( |\mathsf{N}_{r}(v)|  > n^{\frac{1-\delta}{2}} ) &  =    o(n^{-2})\,;\label{eq-i-volume}\\
 \P( \mathrm{Comp}(\mathsf{N}_{r}(v))  >  s_{\lambda})& = o(n^{-2})\,;\label{eq-prob-complexity-bound}\\
 \P(\Xi_{2}(u,v) > s_{\lambda}) &= o(n^{-2})\,. \label{eq-Xi-2-bound}
  \end{align}
   \end{lemma}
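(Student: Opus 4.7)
My plan is to establish the three items in sequence, with the volume bound (i) providing the a priori control that enables Bernoulli tail estimates for (ii) and (iii). For (i), the BFS exploration from $v$ in $\mathcal{G}(n,\lambda/n)$ is stochastically dominated by a Galton--Watson tree $\mathbf T^n$ with $\mathrm{Bin}(n,\lambda/n)$ offspring (at each step, the children of an active vertex form a $\mathrm{Bin}(|U_t|,\lambda/n)\leq\mathrm{Bin}(n,\lambda/n)$ set). The crucial input is the third inequality of \eqref{eq-rho-r-identifiable}, namely $\log(\alpha_\lambda^{-1})/[2(1+\epsilon_0)]>\log\lambda$, which together with $r=(1+\epsilon_0)\log n/\log(\alpha_\lambda^{-1})$ yields $r\log\lambda<(\tfrac12-c)\log n$ for some $c=c(\lambda,\epsilon_0)>0$. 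Fixing $\delta\in(0,2c)$ and setting $\theta:=(1-\delta)\log n/(2r)$, we have $\theta>\log\lambda$ and $e^{\theta r}=n^{(1-\delta)/2}$. Applying \eqref{eq-consequence} in the binomial offspring setting (Remark~\ref{rmk-moments}) with a sufficiently large moment $m$ makes the rate $-r^{-1}\log\P(Z_{\leq r}(\mathbf T^n)>e^{\theta r})$ exceed any prescribed constant $K$; choosing $K>2\log(\alpha_\lambda^{-1})/(1+\epsilon_0)$ converts $e^{-Kr}$ into $o(n^{-2})$, proving \eqref{eq-i-volume}.

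For (ii), set $M=n^{(1-\delta)/2}$ and $\mathcal A_v=\{|\mathsf N_r(v)|\leq M\}$, which has probability $1-o(n^{-2})$ by part (i). I analyse the standard BFS from $v$ on $\mathcal A_v$. The basic structural observation is that no edge can join $A_t$ to $A_s$ with $s\leq t-2$, since this would place the endpoint in $A_t$ at graph distance $\leq s+1<t$ from $v$. Consequently every non-tree edge of $\mathsf N_r(v)$ is either internal to some level $A_t$ (``same-level'') or joins $A_t$ to $A_{t+1}$ beyond the designated parent edge (``multi-parent''). Same-level edges are, conditionally on the BFS output, unconditional $\mathrm{Bern}(\lambda/n)$ variables across at most $\binom{M}{2}$ pairs, and multi-parent edges are bounded by the ``cherry count'' $\sum_{t,w}\binom{X_w^{(t)}}{2}$, where $X_w^{(t)}$ is the number of edges from $w\in U_t$ to $A_t$; the latter is a sum of independent products $\mathcal G_{wu_1}\mathcal G_{wu_2}$ with the same structure as $X_2$ in Lemma~\ref{lemma-sum-of-binomial}. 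Combining Lemma~\ref{lemma-binomial-tail} for the same-level part with Lemma~\ref{lemma-sum-of-binomial} (suitably extended to the relevant $k$ by the same inductive argument) for the multi-parent part yields
\[
\P(\mathrm{Comp}(\mathsf N_r(v))>s,\mathcal A_v)\lesssim_\lambda (rM^2/n)^{s+1}=(rn^{-\delta})^{s+1},
\]
which is $o(n^{-2})$ once $s_\lambda$ is chosen with $\delta s_\lambda>2$, proving \eqref{eq-prob-complexity-bound}.

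Part (iii) is parallel. On $\mathcal A_{u,v}=\{|A_t(u)|,|A_t(v)|\leq M\text{ for all }t\leq r\}$, which has probability $1-o(n^{-2})$ by (i) applied to $u$ and $v$, I decompose $\Xi_2(u,v)=\sum_{t=0}^{r-1}\sum_{w\in U_t}\mathcal G_{w,A_t(u)}\mathcal G_{w,A_t(v)}$. Conditioning on the reduced-BFS history through time $t$, the variables $\{\mathcal G_{w,A_t(u)}\mathcal G_{w,A_t(v)}:w\in U_t\}$ are independent in $w$, each a product of two independent $\mathrm{Bin}(\leq M,\lambda/n)$ random variables; aggregating over $t$ yields a sum with at most $rn$ terms whose distribution matches the $X_2$ structure of Lemma~\ref{lemma-sum-of-binomial} with $f(n)=M$. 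That lemma (or its routine extension to general $k$) then gives
\[
\P(\Xi_2(u,v)\geq s+1,\mathcal A_{u,v})\lesssim_\lambda (rM^2/n)^{s+1}=(rn^{-\delta})^{s+1},
\]
which is $o(n^{-2})$ for $s_\lambda$ with $\delta s_\lambda>2$, establishing \eqref{eq-Xi-2-bound}.

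The main obstacle, in my view, lies in part (i): one must carefully translate the super-exponential-in-$r$ decay of \eqref{eq-consequence} into the polynomial target $o(n^{-2})$, which crucially relies on the quantitative margin provided by the third inequality of \eqref{eq-rho-r-identifiable}. In (ii) and (iii) the conceptual points are standard, but care is needed to verify that conditioning on the (reduced) BFS partition only inflates the relevant Bernoulli tails by a $\lambda$-dependent constant on the volume-bounded events, so that Lemmas~\ref{lemma-binomial-tail} and~\ref{lemma-sum-of-binomial} can be applied cleanly.
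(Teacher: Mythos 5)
Your part (i) is essentially the paper's argument: stochastic domination of the exploration by a $\mathrm{Bin}(n,\lambda/n)$-GW tree, choice of $\delta$ from the third inequality in \eqref{eq-rho-r-identifiable} so that $n^{(1-\delta)/2}=e^{\theta r}$ with $\theta>\log\lambda$, and conversion of the super-exponential rate in \eqref{eq-consequence} (via Remark~\ref{rmk-moments}) into $o(n^{-2})$ using $r\asymp\log n$; the only omission is the case $\lambda<1$, where Lemma~\ref{moments} as stated does not apply but the bound is trivial (the paper disposes of it in one line). For (ii) and (iii) your overall strategy (volume control plus binomial tails) is also the paper's, but one step is asserted rather than proved: Lemma~\ref{lemma-sum-of-binomial} is stated and proved only for $k\le 3$, and the clean bound $\P(X_1+X_2\ge k)\lesssim (r f(n)^2/n)^k$ is not a routine consequence of that proof for general $k$. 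Indeed it can fail when $k$ is large relative to $1/\delta$: a single index $j$ with $\xi_{1,j},\xi_{2,j}$ both of order $\sqrt{k}$ already contributes about $rn(\lambda M/n)^{2\sqrt{k}}$, which need not be $O((rM^2/n)^k)$. In the regime you actually use ($\delta$ small, $s_\lambda\approx 2/\delta$) the desired $o(n^{-2})$ conclusion is true, but it requires a case analysis over the number of contributing indices and the sizes of the factors, not a citation.

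The paper gets around this with simpler devices you may want to adopt. For (ii), on the event $|\mathsf{N}_r(v)|\le n^{(1-\delta)/2}$ every non-tree edge produced by the BFS is one of at most $n^{1-\delta}$ unexamined pairs, each present independently with probability $\lambda/n$; hence $\mathrm{Comp}(\mathsf{N}_r(v))$ is stochastically dominated by a single $\mathrm{Bin}(n^{1-\delta},\lambda/n)$ and Lemma~\ref{lemma-binomial-tail} gives $o(n^{-2})$ with $s_\lambda>1+4/\delta$ --- your same-level/multi-parent split and cherry counts are correct in spirit but unnecessary, and note that the products $\mathcal G_{wu_1}\mathcal G_{wu_2}$ over pairs sharing $w$ are not independent as you state. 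For (iii), instead of a general-$k$ version of Lemma~\ref{lemma-sum-of-binomial}, the paper first bounds the number of indices $j$ with $\xi_{1,j}\xi_{2,j}\ge 1$ (a $\mathrm{Bin}(rn,p_n^2)$ variable with $p_n\approx\lambda n^{-(1+\delta)/2}$) by a constant $S_1$ via Lemma~\ref{lemma-binomial-tail}, then bounds each individual factor by a constant $S_2$ with conditional probability $1-o(n^{-2})$, and takes $s_\lambda>S_1S_2^2$. With either of these repairs your proof goes through; your conditional-independence observations on the volume event and the reduction of $\Xi_2$ to the $X_2$ structure are sound.
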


The next lemma  needs  some notations. If we only keep the relative ordering  for labels  of the vertices in $\mathcal{T}_{\mathrm{aux}}(u)$, we get a  rooted ordered tree. More precisely,  let  $\mathcal{U}=\cup_{n=0}^{\infty} \mathbb{N}^{n}$. We map $y \in \mathcal{T}_{\mathrm{aux}}(u)$ to a label $\mathbf{i}=i_{1} i_{2} \ldots i_{m} \in \mathcal{U}$, if the path from $u$ to $y$ is  $\left(y_0=u, y_{1}, \ldots, y_{m}=y\right)$ and $y_{k}$ is the $i_{k}$-th smallest one among all the children of $y_{k-1}$.   Then we get a rooted order tree which we denote by $\tau$ (we  write $\mathcal{T}_{\mathrm{aux}}(u)=\tau$ for short).

 \begin{lemma}\label{lemma-aux-tree-vs-GW-tree}
  Let  $\delta $ be chosen as in Lemma \ref{lemma-complexity-of-Nr}.
 For  $u, v \in \mathcal{G}_{n,\frac{\lambda}{n}}$,  we have
  \begin{equation*}
    \P(  | \mathcal{T}_{\mathrm{aux}}(u)|_{r} | >      n^{\frac{1-\delta}{2}} \text{ or }  | \mathcal{T}_{\mathrm{aux}}(v) |_{r} |  >    n^{\frac{1-\delta}{2}} ) = o(n^{-2}) \,.
      \end{equation*}
  Moreover, there exists $\Delta_{n} $ depending only on $\delta$ with  $\Delta_{n}  \overset{n \to \infty}{\longrightarrow}  0$ such that for all  rooted ordered trees $\tau , \tau'$ with $|\tau|, |\tau| \leq   n^{\frac{1-\delta}{2}}$, we have   (denote by $\mathbf{T}, \mathbf{T}'$ two independent PGW($\lambda$) trees)
    \begin{equation*}
      \Big| \frac{\P( \mathcal{T}_{\mathrm{aux}}(u)|_{r}  = \tau, \mathcal{T}_{\mathrm{aux}}(v)|_{r}  = \tau')}{\P( \mathbf{T}|_{r}   =    \tau )\P( \mathbf{T}|_{r}    = \tau'   )}  -1  \Big| \leq \Delta_n \,.
    \end{equation*}
  \end{lemma}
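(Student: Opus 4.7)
The lemma splits into a volume bound and a distributional comparison; the plan is to handle them in turn, the first by a stochastic domination and the second by a careful expansion over labelings.

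For the volume bound, the first step is to stochastically dominate $\mathcal{T}_{\mathrm{aux}}(u)$ by a $\mathrm{Bin}(n,\lambda/n)$-Galton-Watson tree $\mathbf{T}^{(n)}$. The key observation is that at any real vertex $y \in A_t(u)$, the children of $y$ in $\mathcal{T}_{\mathrm{aux}}(u)$---namely the BFS-children in $A_{t+1}(u)$ assigned to $y$ by the order-based rule, together with the grafted copies of $R_{t+1}$-neighbors of $y$---are in bijection with a subset of the $\mathcal{G}$-neighbors of $y$ lying in $U_t$, so their count is dominated by $\mathrm{Bin}(n-1,\lambda/n)$; while at any grafted (fake) vertex the offspring is exactly $\mathrm{Bin}(n,\lambda/n)$ and independent of all previously exposed history. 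Composing these bounds level by level gives stochastic domination of $|\mathcal{T}_{\mathrm{aux}}(u)|_r|$ by $Z_{\leq r}(\mathbf{T}^{(n)})$. Condition \eqref{eq-rho-r-identifiable} supplies $\tfrac{\log \alpha_\lambda^{-1}}{2(1+\epsilon_0)} > \log \lambda$, so for $\delta > 0$ small enough the value $\theta := \tfrac{(1-\delta)\log \alpha_\lambda^{-1}}{2(1+\epsilon_0)}$ satisfies $\theta > \log \lambda$ and $e^{\theta r} = n^{(1-\delta)/2}$. Applying \eqref{eq-consequence} via Remark~\ref{rmk-moments} then yields $\P(Z_{\leq r}(\mathbf{T}^{(n)}) > n^{(1-\delta)/2}) = o(e^{-Kr})$ for every $K > 0$, which is $o(n^{-3})$ since $r \gtrsim \log n$, and a union bound on $\{u,v\}$ closes the first assertion.

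For the distributional comparison, the plan is to expand $\P(\mathcal{T}_{\mathrm{aux}}(u)|_r = \tau, \mathcal{T}_{\mathrm{aux}}(v)|_r = \tau')$ as a sum over compatible labelings that assign each real vertex of $\tau,\tau'$ a distinct vertex of $V$ (with roots going to $u,v$), each grafted vertex the label $\infty$, and respect the order-based BFS rule. For a fixed labeling, the probability factors as $(\lambda/n)^{e}(1-\lambda/n)^{N}$ (where $e$ counts the BFS-edges prescribed by $\tau,\tau'$ and $N$ the non-edges tested between revealed and unexplored vertices during the reduced BFS) times the probability that the grafted subtrees match the corresponding independent $\mathrm{Bin}(n,\lambda/n)$-GW trees---a product of binomial offspring factors. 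The target $\P(\mathbf{T}|_r = \tau)\P(\mathbf{T}|_r = \tau')$ is the analogous product with Poisson weights $\mu_{d_y}$. The comparison then proceeds by (i) approximating each $\mathrm{Bin}(n,\lambda/n)$ offspring probability by its Poisson analogue via $\binom{n}{d}(\lambda/n)^{d}(1-\lambda/n)^{n-d}/\mu_d = 1 + O(d^2/n)$ uniformly in bounded $d$, (ii) using $(1-\lambda/n)^N = e^{-\lambda N/n}(1+O(N/n^2))$ together with the labeling count $\sim n^{k}/\prod_y d_y!$ for $k$ the number of real vertices, and (iii) verifying that all these factors combine to an overall multiplicative error of $O((|\tau|+|\tau'|)^2/n)$, which is $O(n^{-\delta})$ under the hypothesis $|\tau|,|\tau'| \leq n^{(1-\delta)/2}$; this supplies the required $\Delta_n$.

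The main obstacle will lie in the second part, specifically in bookkeeping the tested non-edge count $N$---which couples the two BFSs because $R_{t+1}$ is defined via common neighborhoods of $A_t(u)$ and $A_t(v)$---and in verifying that the $(1-\lambda/n)^N$ factors from the BFS combine with the binomial factors from the grafted subtrees to reconstitute, up to the claimed error, the Poisson probabilities $\prod_y \mu_{d_y}$. This cancellation is precisely the design principle of the grafting construction: inserting a fresh GW subtree at every $R_{t+1}$-neighbor restores the independence that the BFS ``cut'' operation would otherwise destroy, so that the joint distribution of $(\mathcal{T}_{\mathrm{aux}}(u)|_r,\mathcal{T}_{\mathrm{aux}}(v)|_r)$ decouples to leading order into two independent Poisson-GW trees. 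The technical effort is to make this heuristic rigorous while tracking the small correction terms.
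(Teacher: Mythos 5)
Your proposal is correct and follows essentially the same route as the paper: the second assertion is argued exactly as in the paper (following \cite[Lemma 2.2]{RW10}) by summing over labelings, writing each labeled probability as a product of $(\lambda/n)$-edge and $(1-\lambda/n)$-non-edge factors, and comparing with the Poisson product up to a multiplicative error of order $(|\tau|+|\tau'|)^2/n = O(n^{-\delta})$. The only (harmless) variation is in the volume bound, where you dominate $|\mathcal{T}_{\mathrm{aux}}(u)|_r|$ directly by a single $\mathrm{Bin}(n,\lambda/n)$-GW tree, while the paper first restricts to the events of Lemma~\ref{lemma-complexity-of-Nr} ($|\mathsf{N}_r(v)|\le \tfrac12 n^{\frac{1-\delta}{2}}$ and $\Xi_2\le s_\lambda$) and bounds the auxiliary tree by $\tfrac12 n^{\frac{1-\delta}{2}}$ plus $s_\lambda$ independent copies of $Z_{\le r}$; both routes then conclude via \eqref{eq-consequence} and Remark~\ref{rmk-moments}.
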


 \begin{lemma}\label{lemma-condi-isomorphic-prob-aux-tree} Let  $s_{\lambda}$ be chosen as in Lemma \ref{lemma-complexity-of-Nr}.  For two vertices $u$ and $v$, let $\Omega_{a} =\{ \mathsf{N}_{r+1}(u) \sim \mathsf{N}_{r+1}(v), \mathrm{Comp}(\mathsf{N}_{r}(u)) \leq s_{\lambda} , \Xi_{2} \leq s_{\lambda}\}$. Then, there exists a constant $C_\lambda > 0$ depending on $\lambda$ and $s_{\lambda}$ such that for any event $\Omega_{b}$ which is measurable  with respect to the $\sigma$-field generated by $ \{ \Xi_{i},\Lambda_{i} \mbox{ for } i =1,2 $,  $|\mathcal{T}_{\mathrm{cut}}(u)|, |\mathcal{T}_{\mathrm{cut}}(v)|$, $H(\mathcal{T}_{\mathrm{cut}}(u)),  H(\mathcal{T}_{\mathrm{cut}}(v))$, $|\mathsf{N}_{r}(w)| \mbox{ for } w \in \mathcal{G} \}$, we have that
 \begin{equation*}
  \P( \mathcal{T}_{\mathrm{aux}}(u)|_{r} \sim  \mathcal{T}_{\mathrm{aux}}(v)|_{r}  \big|  \Omega_{a} \cap \Omega_{b} ) \geq \frac{1}{C_{\lambda}} \,.
 \end{equation*}
 \end{lemma}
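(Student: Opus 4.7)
The plan is to exploit the independence of the grafted $\mathrm{Bin}(n,\lambda/n)$-Galton-Watson subtrees (which distinguish $\mathcal{T}_{\mathrm{aux}}$ from $\mathcal{T}_{\mathrm{cut}}$) from both the BFS exploration (and hence from $\Omega_a$) and from the $\sigma$-field generating $\Omega_b$. The goal is to produce, with constant probability, a ``benign'' realization of these grafts that combines with the cut trees to give rooted-isomorphic aux trees.

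First I would restrict to the event $\{|\mathcal{T}_{\mathrm{aux}}(u)|_{r}|,\,|\mathcal{T}_{\mathrm{aux}}(v)|_{r}|\leq n^{(1-\delta)/2}\}$, which by Lemma~\ref{lemma-complexity-of-Nr} (applied after a routine comparison between $\mathcal{T}_{\mathrm{aux}}$ and $\mathsf{N}_r$, as in Lemma~\ref{lemma-aux-tree-vs-GW-tree}) has complementary probability $o(n^{-2})$ and can be absorbed into the constant $C_\lambda$. On $\Omega_a$, the relation \eqref{eq-Av-onto-Au} supplies an isomorphism $\phi:\mathsf{N}_{r+1}(u)\to\mathsf{N}_{r+1}(v)$ with $\phi(A_t(u))=A_t(v)$ and $\phi(R_t)=R_t$ for $t\leq r$; this places the graft sites on the two sides in bijection. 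From the bounds $\Xi_2\leq s_\lambda$ and $\mathrm{Comp}(\mathsf{N}_r(u))\leq s_\lambda$, both $|R_{\leq r}|$ and the number of edges incident to $R_{\leq r}$ are at most a constant $K=K(s_\lambda,\lambda)$, so the total number of grafted subtrees on each side is at most $K$. The event $\mathcal{E}_0$ that every graft is trivial (root with no children) then has probability at least $((1-\lambda/n)^{n})^{2K}\geq (e^{-\lambda}/2)^{2K}$, a positive constant independent of $n$, and by construction $\mathcal{E}_0$ is independent of $\Omega_a\cap\Omega_b$.

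The remaining and most delicate step is to verify that on $\mathcal{E}_0\cap\Omega_a$ one has $\mathcal{T}_{\mathrm{aux}}(u)|_{r}\sim\mathcal{T}_{\mathrm{aux}}(v)|_{r}$. When $\Lambda_1=0$ the induced subgraph $\mathcal{G}[A_{\leq r}(u)]$ is a tree, so $\mathcal{T}_{\mathrm{cut}}(u)|_{r}$ coincides with it; attaching one leaf at each graft site and transporting by $\phi$ yields the desired rooted isomorphism. The hard part will be the case $\Lambda_1>0$: here the two BFS spanning trees depend on the global vertex ordering and need not be rooted-isomorphic even though their ambient graphs are $\phi$-isomorphic. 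My plan is to exploit the bounds $\Lambda_1,\Lambda_2,\Xi_2\leq s_\lambda$ to enumerate a finite list (whose size depends only on $s_\lambda$ and $\lambda$) of rooted-isomorphism types for the pair $(\mathcal{T}_{\mathrm{cut}}(u)|_{r},\mathcal{T}_{\mathrm{cut}}(v)|_{r})$ realizable on $\Omega_a\cap\Omega_b$, and to show that for each such type there is a specific configuration of grafted subtrees of bounded total size that restores the isomorphism. Since each such configuration has probability bounded below by a constant depending only on $\lambda$ and $s_\lambda$ (by the same independent-Poisson-offspring calculation used for $\mathcal{E}_0$), and since these good configurations are disjoint, averaging them yields the claimed lower bound $1/C_\lambda$.
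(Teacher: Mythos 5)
Your treatment of the easy case (all exploration graphs are trees, grafts trivial) is fine, and your observation that the grafted subtrees are independent of the graph-measurable events $\Omega_a\cap\Omega_b$ is correct. But the case you yourself flag as hard -- $\Lambda_1\geq 1$, where the BFS spanning trees depend on the global vertex ordering -- is exactly where your plan breaks down, and it is the heart of the lemma. First, there is no ``finite list (whose size depends only on $s_\lambda$ and $\lambda$) of rooted-isomorphism types for the pair $(\mathcal{T}_{\mathrm{cut}}(u)|_{r},\mathcal{T}_{\mathrm{cut}}(v)|_{r})$ realizable on $\Omega_a\cap\Omega_b$'': the conditioning only fixes $\Xi_i,\Lambda_i$, sizes and heights, so the cut trees range over unboundedly many isomorphism types (their sizes can be polynomial in $n$). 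Second, and more fatally, when a vertex $y\in A_{t+1}(u)$ has two neighbours $z_1<z_2$ in $A_t(u)$, the BFS attaches $y$ (and hence the \emph{entire} subtree below $y$, of unbounded size) to $z_1$, while on the $v$-side $\phi(y)$ may be attached to $\phi(z_2)$ if $\phi(z_2)<\phi(z_1)$; the resulting cut trees differ by relocating an arbitrarily large subtree, and no grafted configuration ``of bounded total size'' -- placed, moreover, only at the sites adjacent to removed vertices, which have nothing to do with where the discrepancy sits -- can restore a rooted isomorphism. So the averaging argument you propose cannot produce a constant lower bound. (A secondary issue: your opening step of discarding $\{|\mathcal{T}_{\mathrm{aux}}(u)|_r|>n^{(1-\delta)/2}\}$ as an $o(n^{-2})$ event cannot simply be ``absorbed into $C_\lambda$'' after conditioning, since $\Omega_a\cap\Omega_b$ may itself have probability of order $n^{-2}$ or smaller; the lemma must hold for every $\Omega_b$.)

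The missing idea, which is how the paper proceeds, is to use the randomness of the \emph{labels} rather than of the grafts to control the tie-breaking: sample first the isomorphism class of $\mathcal{G}$ (with $u,v$ pinned), then the labelling uniformly among those giving that class, and note that the only label-dependent choices in building $\mathcal{T}_{\mathrm{cut}}$ occur at vertices with at least two potential parents, of which there are at most $s_\lambda$ on $\Omega_a$, each with at most $s_\lambda+1$ parents. Hence with conditional probability at least $((s_\lambda+1)!)^{-s_\lambda}$ the orderings of the parents of $y$ and of $\phi(y)$ agree for all such $y$, which forces $\mathcal{T}_{\mathrm{cut}}(u)|_r\sim\mathcal{T}_{\mathrm{cut}}(v)|_r$; after that one only needs the at most $s_\lambda$ corresponding pairs of grafted GW trees to be isomorphic (probability at least $\gamma_\lambda$ per pair, or your ``all grafts trivial'' variant), giving a uniform lower bound over every class in $\Omega_a\cap\Omega_b$ and hence the claimed conditional bound. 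Without some argument of this exchangeability type, your proposal does not close the main case.
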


 \begin{lemma}\label{lemma-size-of-bridging-tree}
Let  $\delta $ be chosen as in Lemma \ref{lemma-complexity-of-Nr}. For any two vertices  $u,v \in \mathcal G_{n, \frac{\lambda}{n}}$, we have that 
\begin{equation*}
  \P(\mathsf{N}_{r+1}(u) \sim \mathsf{N}_{r+1}(v)  \mbox{ and } |\mathcal{T}_{\mathrm{cut}}(u)|_{r}|=|\mathcal{T}_{\mathrm{cut}}(v)|_{r}| \geq n^{\frac{\epsilon_0 \wedge \delta }{9}}) = o(n^{-2})\,.
\end{equation*} 
 \end{lemma}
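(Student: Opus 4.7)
The plan is to reduce, via the machinery already built, the event in question to an analogous event for a pair of independent PGW$(\lambda)$ trees, and then control the latter using the conditional moment estimate of Lemma~\ref{cdtmoment}. Denote the event in the statement by $\mathcal{E}$, let $\Omega_a$ be as in Lemma~\ref{lemma-condi-isomorphic-prob-aux-tree}, and set $\Omega_b=\{|\mathcal{T}_{\mathrm{cut}}(u)|_r|=|\mathcal{T}_{\mathrm{cut}}(v)|_r|\geq n^{(\epsilon_0\wedge\delta)/9}\}$. Since $\mathcal{E}\setminus\Omega_a\subset\{\mathrm{Comp}(\mathsf{N}_r(u))>s_\lambda\}\cup\{\Xi_2>s_\lambda\}$, Lemma~\ref{lemma-complexity-of-Nr} immediately discards a set of probability $o(n^{-2})$, so it suffices to bound $\P(\mathcal{E}\cap\Omega_a)=\P(\Omega_a\cap\Omega_b)$. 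Because $\Omega_b$ is measurable with respect to the $\sigma$-field of Lemma~\ref{lemma-condi-isomorphic-prob-aux-tree}, that lemma yields
\[
\P(\Omega_a\cap\Omega_b)\leq C_\lambda\,\P\bigl(\Omega_a\cap\Omega_b\cap\{\mathcal{T}_{\mathrm{aux}}(u)|_r\sim\mathcal{T}_{\mathrm{aux}}(v)|_r\}\bigr).
\]

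Next I would transfer to the PGW regime. Since $\mathcal{T}_{\mathrm{cut}}(u)$ sits inside $\mathcal{T}_{\mathrm{aux}}(u)$, on $\Omega_b$ we have $|\mathcal{T}_{\mathrm{aux}}(u)|_r|\geq n^{(\epsilon_0\wedge\delta)/9}$. Throwing away the event that either auxiliary tree exceeds size $n^{(1-\delta)/2}$ at cost $o(n^{-2})$ (Lemma~\ref{lemma-aux-tree-vs-GW-tree}), and then summing the density comparison in that same lemma over pairs $(\tau,\tau')$ of isomorphic rooted ordered trees with $n^{(\epsilon_0\wedge\delta)/9}\leq|\tau|\leq n^{(1-\delta)/2}$, one obtains, for $\mathbf{T},\mathbf{T}'$ independent PGW$(\lambda)$ trees,
\[
\P(\Omega_a\cap\Omega_b)\lesssim_\lambda \P\bigl(\mathbf{T}|_r\sim\mathbf{T}'|_r,\ |\mathbf{T}|_r|\geq n^{(\epsilon_0\wedge\delta)/9}\bigr)+o(n^{-2}).
\]

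To finish I would split $\{\mathbf{T}|_r\sim\mathbf{T}'|_r\}\subset\{\mathbf{T}\sim_r\mathbf{T}'\}\cup\{\mathbf{T}\sim\mathbf{T}'\}$. On the first event, Lemma~\ref{cdtmoment} and Markov's inequality give
\[
\P\bigl(\mathbf{T}\sim_r\mathbf{T}',\ |\mathbf{T}|_r|\geq n^{(\epsilon_0\wedge\delta)/9}\bigr)\leq \mathfrak{p}_r\cdot\frac{C_m\,r^{2m+2}}{n^{m(\epsilon_0\wedge\delta)/9}},
\]
which, using $\mathfrak{p}_r\asymp\alpha_\lambda^{r}=n^{-(1+\epsilon_0)}$ (Lemma~\ref{Decay pr}) and $r\lesssim\log n$, is $o(n^{-3})$ once $m$ is chosen large enough. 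On the second event, $|\mathbf{T}|_r|\leq|\mathbf{T}|$: for $\lambda\geq 1$, Lemma~\ref{isodecay} gives $\exp(-c(\log n)^{3/2})$; for $\lambda<1$ the subcritical tree has (at least stretched-)exponential size tails, so $\P(|\mathbf{T}|\geq n^{(\epsilon_0\wedge\delta)/9})$ is superpolynomially small. In either regime the contribution is $o(n^{-3})$, which together with the preceding estimates gives the claimed $o(n^{-2})$ bound. The main obstacle I anticipate is threading the conditional moment bound of Lemma~\ref{cdtmoment} through the auxiliary-tree coupling cleanly, so that the polynomial gain $n^{-m(\epsilon_0\wedge\delta)/9}$ multiplies the isomorphism probability $\mathfrak{p}_r$; each individual step is essentially routine once those pieces are in place.
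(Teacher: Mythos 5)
Your proposal follows essentially the same route as the paper's proof: discard the high-complexity and large-$\Xi_2$ events via Lemma~\ref{lemma-complexity-of-Nr}, insert the auxiliary-tree isomorphism event at cost $C_\lambda$ via Lemma~\ref{lemma-condi-isomorphic-prob-aux-tree}, transfer to independent PGW trees via Lemma~\ref{lemma-aux-tree-vs-GW-tree} with the $n^{(1-\delta)/2}$ size cap, and then split $\{\mathbf T|_r\sim\mathbf T'|_r\}$ into $\{\mathbf T\sim_r\mathbf T'\}$ (Lemma~\ref{cdtmoment} plus Markov) and $\{\mathbf T\sim\mathbf T'\}$ (Lemma~\ref{isodecay}). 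The only cosmetic difference is that the paper dispatches the case $\lambda<1$ at the outset by a subcritical branching tail bound, whereas you fold it into the final step; this changes nothing essential.
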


\begin{proof}[Proof of Lemma \ref{lemma-reducedBFS}]
  Let $\Omega_{\mathrm{typ}}$ be the intersection of the following events: $\mathsf{N}_{r+1}(u) \sim \mathsf{N}_{r+1}(v) $,  $\mathsf{N}_{r}(u)$ survives,  $|\mathcal{T}_{\mathrm{cut}}(u)|_{r}| \leq f(n)$ where $f(n)=n^{\frac{\epsilon_0 \wedge \delta }{9}}$,
  $\mathrm{Comp}(\mathsf{N}_{r}(u)) \leq s_{\lambda} $ and $|\mathsf{N}_{r}(w)|\leq n^{\frac{1-\delta}{2}}$ for all $w \in \mathcal{G}$. Then by Lemmas~\ref{lemma-complexity-of-Nr} and \ref{lemma-size-of-bridging-tree}, we have 
  \begin{equation*}
  \P( \{ \mathsf{N}_{r+1}(u) \sim \mathsf{N}_{r +1}(v), \mathsf{N}_r(u)  \text{ survives}\}  \cap \Omega_{\mathrm{typ}}^{c}   )= o(\frac{1}{n^2})\,.
  \end{equation*} 
  For an event $A$, we define 
 $\P_{\mathrm{typ}}( A) = \P ( A \cap \Omega_{\mathrm{typ}} )$. Next, we prove Lemma~\ref{lemma-reducedBFS} item by item.

  \noindent (i). On  the event  $  |A_{\leq r}(v)|= |A_{\leq r}(u)| \leq f(n)$, we have that
  $\Xi_{1}=\sum_{x \in A_{\leq r}(u), y \in A_{\leq r}(v)} \mathcal{G}_{xy} $ is stochastically dominated by a binomial variable $ X_ 1\sim \mathrm{Bin}(f(n)^{2}, \frac{\lambda}{n}) $.
   Similarly,  since $ \Xi_{2} =\sum_{t=0}^{r-1} \sum_{w \in U_{t}} \mathcal{G}_{w,A_{t}(u)}\mathcal{G}_{w,A_{t}(v)}$,  we have  $\Xi_{2}$ is stochastically dominated by $X_2 =\sum_{j=1}^{rn} \xi_{1,j} \xi_{2,j}$, where $\xi_{i,j}$'s are i.i.d.\ binomial variables $\mathrm{Bin}(f(n), \frac{\lambda}{n})$.
Observing that $\Xi_{1}$ and $\Xi_{2}$ are independent since they are measurable  functions of different edges,  $\Xi $ is stochastically dominated by $X_1 + X_2$ with $X_1$ independent of $X_2$. Applying Lemma \ref{lemma-sum-of-binomial}, we have
    \begin{align*}
    & \P_{\mathrm{typ}}(   \Xi \geq 3    )   \leq   \P(  X_1+X_2 \geq 3  ) \lesssim_{\lambda}  \frac{ r^3 f(n)^{6}}{n^3} = o (\frac{1}{n^{2}}) \,.
    \end{align*}
Combined with \eqref{eq-ii-first-bound}, this proves the first assertion in (i) via a simple union bound.

Furthermore, when $\mathsf{N}_{r+1}(u) \sim \mathsf{N}_{r+1}(v) $,  if $w \in U_{t}$ and  $\mathcal{G}_{w,A_{t}(u)} \mathcal{G}_{w,A_{t}(v)}\geq 1$, then by \eqref{eq-Av-onto-Au} we have either  $\mathcal{G}_{w,A_{t}(u)} =\mathcal{G}_{w,A_{t}(v)}$ or there exists $w' \neq w$ in $U_{t}$ such that $\mathcal{G}_{w,A_{t}(u)} \mathcal{G}_{w,A_{t}(v)} = \mathcal{G}_{w',A_{t}(u)} \mathcal{G}_{w',A_{t}(v)}$. Thus when  $\Xi_{2}    \leq 2$,  we have $\mathcal{G}_{w,A_{t}(u) } \mathcal{G}_{w,A_{t}(v) } \in \{0,1 \}$ for all $w \in U_{t}$. Hence $|R_{\leq r}| = \Xi_{2}$.

 \noindent  (ii).  On the event $\Omega_{\mathrm{typ}}$  and $\Xi = 0$, by definition we have $\Lambda_{i}=0$ for $i=2,3$. The trees $\mathcal{T}_{\mathrm{cut}}(v)$, $\mathcal{T}_{\mathrm{cut}}(u)$  are exactly the auxiliary trees $\mathcal{T}_{\mathrm{aux}}(v)$, $\mathcal{T}_{\mathrm{aux}}(u)$, and  $H(\mathcal{T}_{\mathrm{aux}}(v)) \geq r$. Applying  Lemma \ref{lemma-condi-isomorphic-prob-aux-tree}, we have
   \begin{align}\label{eq-isomorphic-lower-conditional-prob}
   \P( \mathcal{T}_{\mathrm{aux}}(v)  \sim_{r} \mathcal{T}_{\mathrm{aux}}(u)  | \Omega_{\mathrm{typ}} ,  \Xi=0 , \Lambda_{1} \geq 1 ) & \geq \frac{1}{C_{\lambda}}  \,.
    \end{align}
   On the other hand,  note that $\mathrm{Comp}(\mathcal{G}[A_{\leq r}(u)])$  comes from  edges  within $A_{\leq r}(u)$ but not in $E(\mathcal{T}_{\mathrm{cut}}(u)|_{r})$. 
Conditioned on $ \mathcal{T}_{\mathrm{cut}}(u)|_{r}$ and $\mathcal{T}_{\mathrm{cut}}(v)|_{r} $ with $|\mathcal{T}_{\mathrm{cut}}(u)|_{r}| \leq f(n) $ and $ |\mathcal{T}_{\mathrm{cut}}(v)|_{r}| \leq f(n)$, we have that $\Lambda_{1}(u) $ and $\Lambda_{1}(v)$ are independent and are both  stochastically dominated by a binomial variable $Y_{1} \sim \mathrm{Bin}( f(n)^{2} , \frac{\lambda}{n})$.
 By the fact that  $\Lambda_{1}(u)=\Lambda_{1}(v)$  on the event $\mathsf{N}_{r+1}(u) \sim \mathsf{N}_{r+1}(v) $ and
   by \eqref{eq-isomorphic-lower-conditional-prob},
   we have
   \begin{align}
   & \P_{\mathrm{typ}}( \Xi=0,  \Lambda_{1} \geq 1 )  \leq  C_{\lambda}\P_{\mathrm{typ}}( \Xi=0, \mathcal{T}_{\mathrm{aux}}(u) \sim_{r} \mathcal{T}_{\mathrm{aux}}(v),   \Lambda_{1}(u) \geq 1,   \Lambda_{1}(v) \geq 1) \nonumber \\
   & \leq C_{\lambda} \P( \mathcal{T}_{\mathrm{aux}}(u) \sim_{r} \mathcal{T}_{\mathrm{aux}}(v)  ) \P( Y_{1}\geq 1 )^{2}   \lesssim_{\lambda}   \alpha_{\lambda}^{r} \frac{f(n)^2}{n} = o(\frac{1}{n^2}) \,, \label{eq-ii-first-bound}
   \end{align}
where the second inequality follows from independence and the aforementioned stochastic dominance,
 and the third inequality follows from Lemmas \ref{lemma-aux-tree-vs-GW-tree}, \ref{Decay pr} and \ref{lemma-binomial-tail}.

  When $\Xi= \Lambda =0$,  we have $\mathsf{N}_{r}(u)= \mathcal{T}_{\mathrm{aux}}(u)|_{r}$. If $u$ has two $r$-arms, then the tree $\mathcal{T}_{\mathrm{aux}}(u)$ has two subtrees both of which survive $(r-1)$ levels. Hence  by Lemmas \ref{lemma-aux-tree-vs-GW-tree} and \ref{Decay pr},
    \begin{align*}
   & \P_{\mathrm{typ}}(   \Lambda =\Xi=0, u \text{ has two $r$-arms} ) \\
    & \leq   \P( \mathcal{T}_{\mathrm{aux}} (u)  \sim_{r} \mathcal{T}_{\mathrm{aux}} (v), \mathcal{T}_{\mathrm{aux}}(u) \text{ has two  subtrees surviving $(r-1)$ levels} ) \\
     & \lesssim_{\lambda}   \alpha_{\lambda}^{r} \times \alpha_{\lambda}^{r} =  o(\frac{1}{n^2}) \,.
    \end{align*}
    Combined with \eqref{eq-ii-first-bound}, this proves (ii) via a simple union bound.

 \noindent  (iii). When  $\Xi=\Xi_{1}=1$, by definition  $\Lambda_{2}=0$. 
 We next consider the case for $\Xi=\Xi_{2}=1$.  
Since  $\Lambda_{2}(u)$, $\Lambda_{2}(v)$ and $\Xi$ are measurable  functions of different edges,  conditioned on $\{ \Xi=\Xi_{2}=1 , |A_{\leq r}(u)|= |A_{\leq r}(v)| \leq f(n), |\mathsf{N}_{r}(w)| \leq n^{\frac{1-\delta}{2}} \text{ for } w \in R_{\leq r} \}$, we have that $\Lambda_{2}(u)$ and $\Lambda_{2}(v)$ are independent and  are both  stochastically dominated by a binomial variable $Y_{2} \sim \mathrm{Bin}(f(n)n^{\frac{1- \delta}{2}}, \frac{\lambda}{n})$.  Noting that $\Lambda_{2}(u)=\Lambda_{2}(v)$ when $\mathsf{N}_{r+1}(u) \sim \mathsf{N}_{r+1}(v) $, by  Lemmas \ref{lemma-sum-of-binomial}, \ref{lemma-binomial-tail}, we have for $i=1,2$,
   \begin{align*}
    \P_{\mathrm{typ}}( \Xi=1, \Lambda_{i} \geq 1 ) &=   \P_{\mathrm{typ}}( \Xi=1, \Lambda_{i}(u) \geq 1,\Lambda_{i}(v) \geq 1 ) \\
   &\leq    \P( X_{1}+ X_{2} \geq 1   ) \P( Y_{i} \geq 1 )^{2} = o(\frac{1}{n^2})\,.
    \end{align*}
Write $\rho'= \frac{1+\epsilon_0/2}{1+\epsilon_0}$. Note that $\mathcal{T}_{\mathrm{cut}}(u)$ is a subtree of $\mathcal{T}_{\mathrm{aux}}(u)$ by deleting at most $1$ vertex when $\Xi = 1$.  Therefore,
\begin{align*}
 & \P_{\mathrm{typ}}( \Xi=1,  \Lambda=0, H( \mathcal{T}_{\mathrm{cut}}(v)) >  \rho' r ) \leq  C_{\lambda}\P_{\mathrm{typ}}( \Xi=1,   \mathcal{T}_{\mathrm{aux}}(v) \sim_{ \rho' r }  \mathcal{T}_{\mathrm{aux}}(u))   \\
& \leq  C_{\lambda} \P(  \mathcal{T}_{\mathrm{aux}}(v) \sim_{ \rho' r }  \mathcal{T}_{\mathrm{aux}}(u) )   \P( X_{1}+ X_{2} \geq 1  )
 \lesssim_{\lambda}   r^{3} \alpha_{\lambda}^{ \rho' r}    \frac{rf(n)^{2}}{n} = o (\frac{1}{n^2}) \,.
 \end{align*}
Here the first inequality follows from Lemma~\ref{lemma-condi-isomorphic-prob-aux-tree}; the second inequality follows from the independence and the stochastic dominance;  the third inequality  follows from Lemmas~\ref{lemma-aux-tree-vs-GW-tree}, \ref{lemma-sum-of-binomial} and \ref{Decay pr}. This proves (iii) via a simple union bound.

\noindent (iv).  On $\Omega_{\mathrm{typ}} $, when $|R_{\leq r}|\leq 1$ we have   $\Lambda_{3}=0$. When $\Xi= \Xi_{2}=|R_{\leq r}|=2$ (and we write $R_{\leq r} = \{w_1, w_2\}$),  
we have  $\Lambda_{3} \geq 1$ only if there exists $\ell \leq r$ such that $ \mathsf N_{k}(w_{1}; \mathcal{G}[V\backslash A_{\leq r}] )  \cap \mathsf N_{k}(w_{2}; \mathcal{G}[V\backslash A_{\leq r} ]) = \emptyset$ holds for $k=\ell-1$ but not $k=\ell$. Here $\mathcal{G}[A]$ is the subgraph on $\mathcal{G}$ induced by the vertex set $A$. 
Conditioned on $\Xi=\Xi_{2}=2$  and $|\mathsf N_{k}(w_{i}; \mathcal{G}[V\backslash A_{\leq r}] )| \leq n^{\frac{1-\delta}{2}}$ for $i \in \{1, 2\}$, we have that  $\Lambda_{3} $ is stochastically dominated by some $ Y_{3} \sim \mathrm{Bin} (n^{1- \delta}, \frac{\lambda}{n})$.
 Then by  the aforementioned stochastic dominance and Lemmas~\ref{lemma-sum-of-binomial}, \ref{lemma-binomial-tail}, we have that for $i=1,2,3$,
    \begin{align}\label{eq-iv-Lambda-larger-0}
     \P_{\mathrm{typ}}( \Xi=2, \Lambda_{i} \geq 1) \leq  \P( X_{1}+X_{2} \geq 2  )\P( Y_{i} \geq 1 )= o(\frac{1}{n^2})  \,.
    \end{align}
In addition, by Lemma \ref{lemma-condi-isomorphic-prob-aux-tree},  we have that for every $\ell \leq r$,
  \begin{equation}\label{eq-aux-isomorphic-conditional-prob}
 \P( \mathcal{T}_{\mathrm{aux}}(u)  \sim_{\ell} \mathcal{T}_{\mathrm{aux}}(v) | \Omega_{\mathrm{typ}}, \Xi=2, \Lambda= 0, H( \mathcal{T}_{\mathrm{cut}}(v)) =  \ell)  \geq \frac{1}{C_{\lambda}} \,.
  \end{equation}
Let $\Gamma_{ \ell}=  \sum_{t=0}^{\ell}\sum_{x \in A_{t}(u), y \in A_{t}(v)} \mathcal{G}_{xy} +     \sum_{t=0}^{\ell-1}\sum_{w \in U_{t}} 1_{\{ \mathcal{G}_{w, A_{t}(u)} \geq 1 \}} 1_{ \{\mathcal{G}_{w, A_{t}(v) \geq 1}\}}$.  By the fact that $\Gamma_\ell=2$ (when  $\Xi=2$ and $H(\mathcal T_{\mathrm{cut}}(v)) = \ell$) and the fact that
 $H(\mathcal T_{\mathrm{aux}}(v)) \geq H(\mathcal T_{\mathrm{cut}}(v))$, we get from \eqref{eq-aux-isomorphic-conditional-prob} that
 \begin{align}\label{eq-iv-Lambda-0}
 \P_{\mathrm{typ}}( \Xi=2, \Lambda= 0, H( \mathcal{T}_{\mathrm{cut}}(v)) >  L)   \leq C_{\lambda} \sum_{\ell = L}^{r} \P(  \mathcal{T}_{\mathrm{aux}}(v) \sim_{\ell}  \mathcal{T}_{\mathrm{aux}}(u) ; \Gamma_{\ell}  = 2 )   \,,
 \end{align}
Given $\mathcal{T}_{\mathrm{cut}}(u)$ and $\mathcal{T}_{\mathrm{cut}}(v)$ as well as $\mathcal{T}_{\mathrm{aux}}(u)$ and $\mathcal{T}_{\mathrm{aux}}(v)$,  we have that $\Gamma_{\ell}$ is stochastically dominated by the sum of  $\mathrm{Bin}( |\mathcal{T}_{\mathrm{cut}}(u)|_{\ell}||\mathcal{T}_{\mathrm{cut}}(u)|_{\ell}|, \frac{\lambda}{n}) $  and $ \mathrm{Bin}( n, \frac{\lambda |\mathcal{T}_{\mathrm{cut}}(u)|_{\ell}||\mathcal{T}_{\mathrm{cut}}(u)|_{\ell}| }{n^2}) $ where these two binomial variables are independent. Then by  Lemmas \ref{lemma-aux-tree-vs-GW-tree} and \ref{lemma-binomial-tail},
\begin{align*}
&    \sum_{\ell = L}^{r}   \E \left[   \P( \Gamma_{\ell}  = 2 | \mathcal{T}_{\mathrm{aux}}(u), \mathcal{T}_{\mathrm{aux}}(v) )  ;   \mathcal{T}_{\mathrm{aux}}(u) \sim_{\ell} \mathcal{T}_{\mathrm{aux}}(v)  \right] \\
& \lesssim_{\lambda}    \sum_{\ell = L}^{r}   \E \left[  \frac{| Z_{\leq \ell}( \mathcal{T}_{\mathrm{aux}}(u))|^{4}}{n^2}  ;   \mathcal{T}_{\mathrm{aux}}(u) \sim_{\ell} \mathcal{T}_{\mathrm{aux}}(v) ; | \mathcal{T}_{\mathrm{aux}}(u)|_r| \leq n^{\frac{1-\delta}{2}} \right]  +o(n^{-2}) \\
&  \lesssim_{\lambda}   \frac{1}{n^2}   \sum_{\ell = L}^{\infty}   \E \left[ | Z_{\leq \ell}(\mathbf T)|^{4}  ;   \mathbf T  \sim_{\ell} \mathbf T'  \right] + o(n^{-2})\,,
\end{align*}
where $Z_{\leq \ell}$ (as before) denotes for the number of vertices in the first $\ell$-levels of a tree and $\mathbf T, \mathbf T'$ are two independent PGW($\lambda$)-trees.
Thanks to Lemmas~\ref{cdtmoment} and \ref{Decay pr}, we have that the sum on the right-hand side above vanishes in $L$. 
Combined with \eqref{eq-iv-Lambda-larger-0} and \eqref{eq-iv-Lambda-0}, it yields (iv)  by a simple union bound.
\end{proof}

It remains to provide the postponed proofs for Lemmas \ref{lemma-complexity-of-Nr}, \ref{lemma-aux-tree-vs-GW-tree},   \ref{lemma-aux-tree-vs-GW-tree} and \ref{lemma-condi-isomorphic-prob-aux-tree}.

 \begin{proof}[Proof of Lemma \ref{lemma-complexity-of-Nr}]
 We first prove (i). When $\lambda < 1$, with high probability every component in $\mathcal{G}$ has size $O(\log n)$ and has at most one cycle (see, e.g., \cite{Bollobas01}). The desired quantitative bounds are also straightforward in this case and can be proved via standard methods (or by an easy adaption for the arguments below when $\lambda \geq 1$). Thus, in what follows we focus on $\lambda \geq 1$.  We first control the volume of $\mathsf N_r(v)$ for $v\in \mathcal G$.
Employing a standard breadth-first-search algorithm, we see that  $|\mathsf{N}_{r}(v)|$ is stochastically dominated by the number of vertices in the first $r$-levels of  a $\mathrm{Bin}(n, \frac{\lambda}{n}) $-GW branching process (denoted as $Z_{\leq r}$).
 By (the second inequality in) \eqref{eq-rho-r-identifiable},  there exists $\delta=\delta(\lambda; \epsilon_0)$ so that   $\frac{(1-\delta)\log(\alpha_{\lambda}^{-1})}{2(1+\epsilon_0)  } > \log(\lambda) $. As $n^{\frac{1-\delta}{2}} =   \exp \{ \frac{(1-\delta)\log(\alpha_{\lambda}^{-1})}{2(1+\epsilon_0)  } r \} $, applying Lemma \ref{moments} (and Remark \ref{rmk-moments})
 we deduce \eqref{eq-i-volume} as follows:
 \begin{equation*}
 \P( |\mathsf{N}_{r}(v)|  > n^{\frac{1-\delta}{2}} ) \leq      \P \big(   Z_{\leq r} >  n^{\frac{1-\delta}{2}}   \big)  =    o(n^{-2})\,.
 \end{equation*}

A cycle in $\mathsf{N}_{r}(v)$ is created by a ``self-intersection",
that is,  an edge from $v_{t}$ (the vertex we are exploring in the breadth-first-search process at time $t$) to some vertex we have explored before $t$.
When $|\mathsf{N}_{r}(v)| \leq n^{\frac{1-\delta}{2}}$,  it is easy to see that $\mathrm{Comp}(\mathsf{N}_{r}(v) ) $ is stochastically dominated by $\mathrm{Bin}\left( n^{1-\delta}, \frac{\lambda}{n}\right)$. By Lemma \ref{lemma-binomial-tail},
  \begin{equation*}
\P\left(\mathrm{Comp}(\mathsf{N}_{r}(v) )  \geq \frac{4}{\delta}\right) \leq
 \P \left( \operatorname{Bin} ( n^{1-\delta}, \frac{\lambda}{n} )    \geq  \frac{4}{\delta}  \right) + o (\frac{1}{n^2}) =  o (\frac{1}{n^2})\,. \end{equation*}
 Taking $s_{\lambda}> 1 + \frac{4}{\delta}$ and combining \eqref{eq-i-volume},  we obtain \eqref{eq-prob-complexity-bound}.

We next prove (ii). When $| \mathsf{N}_{r}(u)|,| \mathsf{N}_{r}(v)| \leq n^{\frac{1-\delta}{2}}$, since $ \Xi_{2} =\sum_{t=0}^{r-1} \sum_{w \in U_{t}} \mathcal{G}_{w,A_{t}(u)}\mathcal{G}_{w,A_{t}(v)}$ we have $ \Xi_{2}$ is stochastically dominated by $\sum_{j=1}^{rn} \xi_{j,1}\xi_{j,2}$ where $\xi_{j,i}$'s  are i.i.d.\ binomial variables $\mathrm{Bin}(n^{\frac{1-\delta}{2}},\frac{\lambda }{n})$. Applying Lemma \ref{lemma-binomial-tail}, there exists a constant $S_{1}$ 
depending only on $\delta=\delta(\lambda;\epsilon_0)$ such that
\begin{equation*}
 \P \left(  \sum_{j=1}^{rn} 1_{\left\{  \xi_{j,1} \geq 1  \right\}}  1_{  \left\{ \xi_{j,2} \geq 1  \right\}}\geq S_{1} \right) = o (\frac{1}{n^2}) \,.
\end{equation*}
  Take $S_{2}$ large enough such that $
   \P \left(  \xi_{1,1} \geq  S_{2} \mid  \xi_{1,1} \geq 1  \right) = o (\frac{1}{n^2})$,  then we  have
   \begin{align*}
    \P \left(  \sum_{j=1}^{rn}  \xi_{j,1}   \xi_{j,2}  \geq   (S_{2})^{2}S_{1} \bigg| \sum_{j=1}^{rn} 1_{\left\{  \xi_{j,1} \geq 1  \right\}}  1_{  \left\{ \xi_{j,2} \geq 1  \right\}} < S_{1} \right) \leq 2s_{1} \P \left(  \xi_{1,1} \geq  S_{2}  \mid  \xi_{1,1} \geq 1  \right)  =   o (\frac{1}{n^2}) \,.
   \end{align*}
   Let $s_{\lambda} > S_{2}^2 S_{1}$. Then by (i),  we deduce
\eqref{eq-Xi-2-bound}, as required.
 \end{proof}

 \begin{proof}[Proof of Lemma \ref{lemma-aux-tree-vs-GW-tree}]
For $u, v \in \mathcal G$, note that on the event $\{|\mathsf{N}_{r}(v)| \leq  \frac{1}{2}n^{\frac{1-\delta}{2}} , \Xi_{2}(u, v) \leq s_{\lambda}\}$, we have that
 $|\mathcal{T}_{\mathrm{aux}} (v)|_{r}| $ is stochastically dominated by $\frac{1}{2}n^{\frac{1-\delta}{2}}+ \sum_{i=1}^{s_{\lambda}} Z^{(i)}_{\leq r}$ where $Z^{(i)}_{\leq r}$ are independent and have the same distribution as $Z_{\leq r}$ (which, as in the proof of Lemma~\ref{lemma-complexity-of-Nr}, is the number of vertices in the first $r$-levels of a $\mathrm{Bin}(n, \lambda/n)$-GW tree). Thus  by Lemmas \ref{lemma-complexity-of-Nr},   \ref{moments}  (and Remark \ref{rmk-moments})
 \begin{equation*}
 \P( |\mathcal{T}_{\mathrm{aux}} (v)|_{r}| > n^{\frac{1-\delta}{2}} )
 \leq  s_{\lambda}  \P ( Z_{\leq r} >   n^{\frac{1}{2}-\delta}   ) +   o(\frac{1}{n^2})   =o(\frac{1}{n^2}) \,.
 \end{equation*}
By symmetry, one can get the same bound for $u$ and this proves the first assertion for Lemma~\ref{lemma-aux-tree-vs-GW-tree}.

We next prove the second assertion, which is  similar to \cite[Lemma 2.2]{RW10}. 
We provide a complete proof here since our auxiliary tree is defined in a slightly non-standard manner.
For  two rooted trees $\tau, \tau'$ with heights at most $r$ and  with $|\tau|, |\tau| \leq   n^{\frac{1-\delta}{2}}$, let  $(b_1, \ldots, b_{|\tau|_{r-1}|})$ and $(b'_1, \ldots, b'_{|\tau'|_{r-1}|})$ be the number of children for vertices obtained along with the breadth-first-search process for $\tau|_{r-1}$ and $\tau'|_{r-1}$ respectively (we do not care about vertices in the $r$-th level as they are surely leaves).
For notation convenience, in this proof we write $t = |\tau|_{r-1}|$ and $t' = |\tau'|_{r-1}|$.
 Note that $\sum_{j=1}^t b_{j}= |\tau| -1$ (and similarly for the prime version). When $\mathcal{T}_{\mathrm{aux}}(u)|_r=\tau$,
 we define by $\sigma$ the map such that $\sigma(\mathbf{i}) $ is the label of the vertex on $\mathcal{T}_{\mathrm{aux}}(u)|_r$ corresponding to $\mathbf{i}$ for $ \mathbf{i} \in \tau$. Then we can regard $\mathcal{T}_{\mathrm{aux}}(u)|_r$ as a labeled rooted ordered tree, and write $\mathcal{T}_{\mathrm{aux}}(u)|_r= (\tau,\sigma)$.
   Therefore, we have
    \begin{equation*}
  \P(  (\mathcal{T}_{\mathrm{aux}}(u))|_r = \tau , (\mathcal{T}_{\mathrm{aux}}(v))|_r = \tau'  ) = \sum_{\sigma, \sigma'} \P( (\mathcal{T}_{\mathrm{aux}}(u))|_r = (\tau, \sigma) , (\mathcal{T}_{\mathrm{aux}}(v))|_r = (\tau',\sigma')  )
  \end{equation*}
 where the sum is over all possible configurations for $\sigma , \sigma'$.
 Given $(\tau, \sigma)$ and $ (\tau', \sigma')$, when we are exploring the $j$-th vertex in the tree $\tau$ (or $\tau'$),  we know that it has $b_{j}$ (or $b_{j}'$) children
 and does not connect to $n-f_{\sigma,\sigma'}(j)$  (or $n-f'_{\sigma,\sigma'}(j)$) vertices, where $|f_{\sigma,\sigma'}(j)| \leq |\tau |$  and $|f'_{\sigma,\sigma'}(j)| \leq |\tau '|$ for all $j$. Therefore,
    \begin{align*}
      & \P(  (\mathcal{T}_{\mathrm{aux}}(u))|_r = (\tau, \sigma) , (\mathcal{T}_{\mathrm{aux}}(v))|_r = (\tau',\sigma')  ) \\
      &= \prod_{j=1}^{t} \big( \frac{ \lambda}{n}  \big)^{ b_{j}} \big(  1-  \frac{ \lambda}{n}  \big)^{n- f_{\sigma,\sigma'}(j)} \prod_{j=1}^{t'} \big( \frac{ \lambda}{n}  \big)^{ b'_{j}} \big(  1-  \frac{ \lambda}{n}  \big)^{n- f'_{\sigma,\sigma'}(j)} \\
      & \in  \big( \frac{\lambda}{n} \big)^{|\tau|+ |\tau'| -2} \big[  \big(  1-  \frac{ \lambda}{n}  \big)^{n(t + t')} ,  \big(  1-  \frac{ \lambda}{n}  \big)^{(n- |\tau| - |\tau'|)  (t+ t')} \big]\,,
    \end{align*}
where as we will see the upper and lower bounds above are very close to each other.
   In addition, the total number of all the configurations for $(\sigma, \sigma') $ is upper-bounded by
    \begin{equation*}
     \binom{n -2}
       {b_{1}, \ldots, b_{t},n- |\tau| -1}
    \binom{n -2}{
       b'_{1}, \ldots, b'_{t'},n- |\tau'| -1}
    \leq  \frac{n^{|\tau| + |\tau'| -2}}{\prod_{j}b_{j}!  \prod_{j}b'_{j}! }
    \end{equation*}
    and lower-bounded by
    \begin{equation*}
      \binom{n -2}
        {b_{1},   \ldots, b_{t},b'_{1}, \ldots, b'_{t'},n- |\tau| - |\tau'|}
= \frac{(n-2)\cdots[n-( |\tau| + |\tau'|-1)]}{ \prod_{j}b_{j}! \prod_{j}b'_{j}!} \,.
    \end{equation*}

    Furthermore, we can compute the analogous probability regarding to two independent PGW($\lambda$)-trees as follows:
    \begin{align*}
      \P( \mathbf{T}|_r=    \tau )\P( \mathbf{T}|_r   = \tau'   )
     = \lambda^{ |\tau| + |\tau'| -2  } e^{-\lambda ( t + t')}   \prod_{j=1}^{t} \frac{1}{b_j!}  \prod_{j=1}^{t'} \frac{1}{b'_j!} \,.
    \end{align*}
    Therefore, noting that $|\tau|, |\tau'| \leq n^{\frac{1-\delta}{2}}$ and using a straightforward computation, we see that
    the ratio $  \frac{\P( (\mathcal{T}_{\mathrm{aux}}(u))|_r = \tau, (\mathcal{T}_{\mathrm{aux}}(v))|_r = \tau')   }{ \P( \mathbf{T}|_r=    \tau )\P( \mathbf{T}|_r  = \tau'   ) } $ converges to 1 as $n\to \infty$, proving the second assertion as required.  
   \end{proof}

Next we prove Lemma \ref{lemma-condi-isomorphic-prob-aux-tree}. Assume that $\mathsf{N}_{r+1}(u) \sim \mathsf{N}_{r+1}(v) $ and $\phi$ is an isomorphism.
Note that \eqref{eq-Av-onto-Au} implies that  $\mathcal{G}[A_{\leq r}(u)]$ and  $\mathcal{G}[A_{\leq r}(v)]$ are isomorphic to each other. However, when there exist cycles in  $\mathcal{G}[A_{\leq r}(u)]$ and $\mathcal{G}[A_{\leq r}(v)]$, our trees
 $\mathcal{T}_{\mathrm{cut}}(u)|_{r}$ and $\mathcal{T}_{\mathrm{cut}}(u)|_{r}$ (which are spanning trees of $\mathcal{G}[A_{\leq r}(u)]$ and $\mathcal{G}[A_{\leq r}(v)]$ respectively) may not be isomorphic since there are some edges deleted. In this case, the event $\mathcal{T}_{\mathrm{cut}}(u)|_{r}\sim \mathcal{T}_{\mathrm{cut}}(v)|_{r} $ occurs or not depends on the labeling configuration on $\mathcal{G}$.

  \begin{proof}[Proof of Lemma \ref{lemma-condi-isomorphic-prob-aux-tree}]
For fixed $u, v\in V$, we say two graphs $G_{1}$ and $G_{2}$ (on $V$) are equivalent if there exists an isomorphism $\varphi$ from $G_{1}$ to $G_{2}$ such that $\varphi(u)=u$ and $\varphi(v)=v$. We write $[G]$ an equivalent class for this equivalent relation.
We can sample the graph $\mathcal{G}$  as follows: first we sample an equivalent class and then we sample the labels uniformly from all labelings that yield the sampled equivalent class.
Now we have
\begin{align*}
  & \P( \mathcal{T}_{\mathrm{aux}}(u)|_{r} \sim  \mathcal{T}_{\mathrm{aux}}(v)|_{r}  | \Omega_{a} \cap \Omega_{b} )   \\
  &= \sum_{[G] \in \Omega_{a} \cap \Omega_{b} } \P( \mathcal{T}_{\mathrm{aux}}(u)|_{r} \sim  \mathcal{T}_{\mathrm{aux}}(v)|_{r}  |   \mathcal{G} \in [G]  )  \P( \mathcal{G} \in [G] |   \Omega_{a} \cap \Omega_{b}   ) \,.
 \end{align*}
Let $\phi$ be an isomorphism between $\mathsf N_{r+1}(u)$ and $\mathsf N_{r+1}(v)$. Recall our reduced BFS. We say a label configuration is successful, if for each vertex $y $ in $A_{t+1}(u)$ with neighbors $z_1(y), \ldots, z_{\ell(y)}(y)$ in $A_{t}(u)$ such that $z_1(y), \ldots, z_{\ell(y)}(y)$ is increasing in the (prefixed) ordering on $V$, then $\phi(z_1(y)), \ldots, \phi(z_{\ell(y)}(y))$ is also increasing in the ordering on $V$.  Clearly, with a successful labeling configuration we have   $\mathcal{T}_{\mathrm{cut}}(u)|_{r} \sim \mathcal{T}_{\mathrm{cut}}(v)|_{r}$.
When  $\mathrm{Comp}(\mathcal{G}[A_{\leq r}(v)]) \leq s_{\lambda} $,
  we claim that
\begin{equation}\label{eq-number-successful-configurations}
 \# \textrm{successful configurations} \geq \frac{\# \textrm{all configurations}}{ ((s_{\lambda} +1 ) ! )^{s_{\lambda}} }\,.
\end{equation}
In order to see this, we observe that $|Y| \leq s_\lambda$ where $Y = \{y \in A_{\leq  r}(u): \ell(y) \geq 2\}$ are the only vertices we need to investigate in order for the labeling configuration to be successful. In addition, once the labels are fixed elsewhere except at $\phi(z_1(y)), \ldots, \phi(z_{\ell(y)}(y))$ for $y\in Y$, the number of valid completions for the labeling configuration is at most $\prod_{y\in Y} \ell_y$ and out of which at least one of them is successful. Since $\ell(y)\leq  \mathrm{Comp}(\mathcal{G}[A_{\leq r}(v)])+1 \leq  s_{\lambda} + 1$, this implies  \eqref{eq-number-successful-configurations}.

At this point, we note that  the event $\mathcal{T}_{\mathrm{aux}}(u)|_{r} \sim \mathcal{T}_{\mathrm{aux}}(v)|_{r}$ occurs if  the following hold:
\begin{itemize}
\item the labeling configuration is successful;

\item for each of the corresponding pairs of the independent $\mathrm{Bin}(n,\frac{\lambda}{n})$-GW trees we grafted they are isomorphic.
\end{itemize} 
When $\Xi_{2} \leq s_{\lambda}$,  we have grafted at most $s_{\lambda}$ pairs of trees. Combined with \eqref{eq-number-successful-configurations}, it gives that  for all $[G] \in \Omega_{a} \cap \Omega_{b}$
  \begin{equation*}
   \P( \mathcal{T}_{\mathrm{aux}}(u)|_{r} \sim  \mathcal{T}_{\mathrm{aux}}(v)|_{r}  |  \mathcal{G} \in [G]  )
     \geq \frac{ \gamma_{\lambda}^{ s_{\lambda}} }{ ((s_{\lambda} +1 ) ! )^{s_{\lambda}} } \,. \qedhere
  \end{equation*}
  \end{proof}

  \begin{proof}[Proof of Lemma \ref{lemma-size-of-bridging-tree}]
    In the case $\lambda < 1$, the statement follows from a simple tail estimate on subcritical branching process.
  Thus, in what follows we assume that $\lambda \geq 1$. Let $f(n)=n^{\frac{\epsilon_0 \wedge \delta }{9}}$.
  Applying  Lemma \ref{lemma-condi-isomorphic-prob-aux-tree}, we have
    \begin{align}\label{lem-4-12-1}
     & \P( \mathsf{N}_{r+1}(v) \sim \mathsf{N}_{r+1}(u), \mathrm{Comp}(\mathsf{N}_{r}(v)) \leq  s_{\lambda}, \Xi_{2} \leq s_{\lambda}, |\mathcal{T}_{\mathrm{cut}}(u)|_{r}| > f(n))  \nonumber\\
     & \leq C_{\lambda} \P(  \mathcal{T}_{\mathrm{aux}}(v)|_{r}  \sim \mathcal{T}_{\mathrm{aux}}(u)|_{r} ; |\mathcal{T}_{\mathrm{aux}}(v)|_{r} |> f(n)  ) \,.
  \end{align}
 Since $\mathcal{T}_{\mathrm{aux}}(u)$ and $\mathcal{T}_{\mathrm{aux}}(v)$ behaves like independent PGW$(\lambda)$ trees (Lemma \ref{lemma-aux-tree-vs-GW-tree}), we have
  \begin{align*}
  & \P(  \mathcal{T}_{\mathrm{aux}}(v)|_{r}  \sim \mathcal{T}_{\mathrm{aux}}(u)|_{r} \,,  f(n) < |\mathcal{T}_{\mathrm{aux}}(v)|_{r} | \leq n ^{\frac{1-\delta}{2}} ) \\
  &= \sum_{\tau \sim \tau', f(n) < |\tau | \leq n ^{\frac{1-\delta}{2}}   }\P(   \mathcal{T}_{\mathrm{aux}}(v)|_{r}= \tau  ,\mathcal{T}_{\mathrm{aux}}(u)|_{r}=\tau' ) \\
  &\lesssim \sum_{\tau \sim \tau', |\tau|> f(n)}\P(   \mathbf{T} |_{r}= \tau  ,\mathbf{T}'|_{r}=\tau' )   = \P(  \mathbf{T}|_{r}  \sim \mathbf{T}'|_{r} ; |\mathbf{T}|_{r} |> f(n) ) \\
 & \leq  \P(   \mathbf{T}  \sim_{r} \mathbf{T}' , |\mathbf{T}|_r |>f(n) ) + \P(   \mathbf{T}  \sim \mathbf{T}' , |\mathbf{T} |> f(n) )   \,,
  \end{align*}
  where the last inequality follows from $\{\mathbf{T}|_{r}  \sim \mathbf{T}'|_{r}\} \subset \{\mathbf{T}  \sim_{r} \mathbf{T}'\} \cup \{ \mathbf{T}  \sim \mathbf{T}'\}$.
 Applying Lemma \ref{cdtmoment}  with  $m =\frac{10}{\epsilon_0 \wedge \delta} $ and applying Markov's inequality, we have
   \begin{align*}
  \P(   \mathbf{T}  \sim_{r} \mathbf{T}' , |\mathbf{T}|_r |>f(n) )   \leq  \E \left[  \frac{  |Z_{\leq r} (\mathbf{T}) |^{m} } {n^{m(\epsilon_0 \wedge \delta)/9}}  ; \mathbf{T}  \sim_{r} \mathbf{T}'     \right] \lesssim    \frac{r^{2m}}{n^{10/9}}  \alpha_{\lambda}^{r} = o (n^{-2}) \,.
   \end{align*}
  Applying  Lemma \ref{isodecay}, we have
 $
   \P(   \mathbf{T}  \sim \mathbf{T}' , |\mathbf{T} |> f(n)) = o (n^{-2}) $.
Altogether, this implies that
$$ \P(  \mathcal{T}_{\mathrm{aux}}(v)|_{r}  \sim \mathcal{T}_{\mathrm{aux}}(u)|_{r} \,,  f(n) < |\mathcal{T}_{\mathrm{aux}}(v)|_{r} | \leq n ^{\frac{1-\delta}{2}} ) = o(n^{-2})\,.$$
Combined with \eqref{lem-4-12-1} and Lemma~\ref{lemma-aux-tree-vs-GW-tree}, this implies that
$$\P( \mathsf{N}_{r+1}(v) \sim \mathsf{N}_{r+1}(u), \mathrm{Comp}(\mathsf{N}_{r}(v)) \leq  s_{\lambda}, \Xi_{2} \leq s_{\lambda}, |\mathcal{T}_{\mathrm{cut}}(u)|_{r}| > f(n) ) = o(n^{-2})\,.$$
Combined with \eqref{eq-prob-complexity-bound} and \eqref{eq-Xi-2-bound}, this competes the proof of the lemma.
     \end{proof}

\small

\end{document}